\bmdefine{\ba}{a}
\bmdefine{\be}{e}
\bmdefine{\bg}{g}
\bmdefine{\bk}{k}
\bmdefine{\bm}{m}
\bmdefine{\bp}{p}
\bmdefine{\bs}{s}
\bmdefine{\bt}{t}
\bmdefine{\bv}{v}
\bmdefine{\bw}{w}
\bmdefine{\by}{y}
\bmdefine{\bz}{z}
\newcommand{\R}{{\mathbb R}}
\newcommand{\Q}{{\mathbb Q}}
\newcommand{\Z}{{\mathbb Z}}
\newcommand{\D}{{\mathcal D}}
\newcommand{\N}{{\mathbb N}}
\newcommand{\A}{{\mathcal A}}
\newcommand{\B}{{\mathcal B}}
\renewcommand{\L}{{\mathbb L}}
\newcommand{\LL}{{\mathcal L}}
\newcommand{\X}{{\mathcal X}}
\newcommand{\Y}{{\mathcal Y}}
\newcommand{\x}{\mathbf{x}}
\newcommand{\y}{\mathbf{y}}
\newcommand{\lxe}{\le_{\mathrm{lex}}}
\newcommand{\lx}{<_{\mathrm{lex}}}
\newtheorem{theorem}{Theorem}[section]
\newtheorem{rem}{Remark}[section]
\newtheorem{lem}[theorem]{Lemma}
\newtheorem{cor}[theorem]{Corollary}
\newtheorem{prop}[theorem]{Proposition}
\newtheorem{ex}[theorem]{Example}
\begin{document}
\title{Multiplicative analogue of Markoff-Lagrange spectrum and Pisot numbers}
\author{Shigeki Akiyama and 
Hajime Kaneko}
\keywords{Pisot numbers, fractional part, Markoff-Lagrange spectrum, 
balanced words, sturmian words, symbolic dynamics}
\maketitle
\begin{abstract}
Markoff-Lagrange spectrum uncovers exotic topological properties of 
Diophantine approximation.
We investigate asymptotic properties of geometric progressions modulo one and observe 
significantly analogous results on the set

\[
{\mathcal L}(\alpha)=\left\{\left.\limsup_{n\to \infty}\|\xi \alpha^n\|\ \right|\ \xi\in {\mathbb R}\right\},
\]
where $\|x\|$ is the distance from $x$ to the nearest integer. 
First, we show that ${\mathcal L}(\alpha)$ is closed in $[0,1/2]$ for any Pisot number $\alpha$.

Then we consider the case where $\alpha$ is an integer with $\alpha\geq 2$, 
or a quadratic unit with $\alpha\ge 3$. 
We show that ${\mathcal L}(\alpha)$ 
contains a proper interval when $\alpha$ is quadratic but it does not when $\alpha$
is an integer. 
We also determine the minimum limit point and all isolated points beneath this point.
In the course of the proof, we revisit a property studied by Markoff which
characterizes bi-infinite balanced words and sturmian words.
\end{abstract}

\section{Introduction}
\subsection{Background}
For a real number \(x\), we write the distance from \(x\) to the nearest integer by 
\(\|x\|\).  
Denote the integral and fractional 
parts of \(x\) by \(\lfloor x\rfloor\) and \(\{x\}\), respectively. 
Moreover, we write the minimal integer greater than or equal to \(x\) by 
\(\lceil x \rceil\). 
In this paper, \(\N\) denotes the set of positive integers.

Hurwitz 
theorem reads for any irrational $x$, there exist
infinitely many rational numbers $p/q$ with
$$
\left| x-\frac{p}{q} \right| \ < \ \frac 1{\sqrt{5}q^2},
$$
where the constant $\sqrt{5}$ can not be replaced by a greater one if and only if $x$ is the image of M\"obius transformation in $GL(2,\Z)$
of the golden mean. 
The best possible constant $\mu(x)$ related to the Diophantine approximation above is defined by 
\begin{align*}
\sup\left\{
c>0 \ \left| \ \left| x-\frac{p}{q} \right| < \frac{1}{cq^2}\mbox{ has infinitely many rational solutions $p/q$}
\right.
\right\},
\end{align*}
that is, 
\begin{align*}
\mu(x)=\left(\liminf_{n\to \infty} n \|n x\|\right)^{-1}\in [\sqrt{5},\infty].
\end{align*}
We see $\mu((1+\sqrt{5})/2)=\sqrt{5}$, and $\mu(x)< \infty$ if and only if $x$ is badly approximable, namely, 
$x$ has bounded partial quotients in its regular continued fractions. 
Lagrange spectrum\footnote{Markoff spectrum is an intimately related set, 
but we do not deal with in this article. See \cite{Cusick-Flahive:89}.}, defined by
$$
\L=\left \{\left. \mu(x) \ \right| x\in \R \backslash \Q, \mu(x)<\infty \right\},
$$
attracted a lot of attention through rich connection with many areas of mathematics. 
Basic results on $\L$ 
include
\begin{itemize}
\item[(i)] $\L$ is closed in $\R$ (Cusick \cite{Cusick:75}).
\item[(ii)] $\L \hspace{0.4mm} \cap \hspace{0.4mm} [0,3)$ is discrete and $3$ is 
the minimum accumulation point of $\L$. Moreover, 
$\L \hspace{0.4mm} \cap \hspace{0.4mm} [0,3)$ is described in terms of Markoff numbers (Markoff \cite{Markoff:80}).
\item[(iii)] Hall \cite{Hall} showed that $\L \supset [6,\infty)$. Moreover, Freiman \cite{Freiman} determined the 
minimum $c$ with $\L \supset [c,\infty)$, where 
\[
c=\frac{2221564096+283748\sqrt{462}}{491993569}=4.527829566\ldots.
\]
\end{itemize}
Moreover, Moreira \cite{Moreira.18} proved that 
the Hausdorff dimension of $\L \cap [0,t]$ is continuous (but not H\"{o}lder continuous) on $t$.  
See further developments in \cite{Cusick-Flahive:89, Bombieri:07, Matheus}. 

Various generalizations has been studied, we just quote a few of them.
From dynamical point of view, 
let $M$ be a surface and $\varphi:M\to M$ be a $C^2$-diffeomorphism with a horseshoe $\Lambda$. 
For a $C^2$ function $f: M\to \R$, let $\L_{\varphi,f}$ be defined by 
$$
\L_{\varphi,f}=\left\{\left.\limsup_{n\to\infty} f(\varphi^n(x)) \ \right| \ x\in \Lambda\right\}.
$$
Cerqueira, Matheus, and Moreira \cite{Cer-Mat-Mor} investigated the continuity of the Hausdorff dimension of 
$\L_{\varphi,f}\cap (-\infty,t)$. $\L_{\varphi,f}$ generalizes $\L$ because $\L$ has a similar formulation (see also Remark \ref{CusickAlt}).
From number theoretical interest, 
quadratic Lagrange spectrum is introduced to study 
approximation of real numbers by certain quadratic irrational numbers 
(see Parkkonen and Paulin \cite{Parkkonen-Paulin.11,Parkkonen-Paulin.14}, Bugeaud \cite{Bugeaud.14}, and Pejkovi\'{c} \cite{Pejkovic.16}).

In this paper, we provide yet another type of Lagrange spectrum arose from uniform distribution 
theory. Classical Lagrange spectrum discusses distribution of arithmetic progressions modulo one. 
We replace it by geometric progressions modulo one and try to observe analogy of Lagrange spectrum for the set 
\begin{align*}
\LL(\alpha)=\left\{\left.\limsup_{n\to \infty}\|\xi \alpha^n\|\ \right|\ \xi\in \R\right\}.
\end{align*}
However, treating general $\alpha>1$ seems to be too difficult at this stage and 
we restrict ourselves to a narrow class of algebraic numbers.

\subsection{Lagrange spectrum of geometric progressions}
Koksma's theorem \cite[Theorem 4.3]{Kuipers-Niederreiter:74} 
implies that for  a fixed $\alpha>1$ and almost all $\xi\in \R$ (or for a fixed $\xi\neq 0$ and almost all $\alpha>1$), 
the sequence
$(\xi \alpha^n)_{n=0,1,\dots}$ is uniformly distributed modulo one. 
In particular, $\limsup_{n\to\infty}\|\xi\alpha^n\|=1/2$. 
However, for a concrete pair $(\xi, \alpha)$,
it is usually difficult to determine $\limsup_{n\to\infty}\|\xi\alpha^n\|$. 
In the case when $\alpha$ is transcendental, then it is generally difficult to prove that 
$\limsup_{n\to\infty}\|\xi\alpha^n\|>0$. 

The situation is 
better when $\alpha$ is a Pisot number. 
Recall that a Pisot number is an algebraic integer greater than 1 
such that the absolute values of 
the conjugates (except for itself) are less than 1. 
Note that every rational integer greater than 1 is a Pisot number.  
Let \(\alpha\) be a Pisot number. 
Since the trace of the number \(\alpha^n\) is an integer, we get 
$
\lim_{n\to\infty}\| \alpha^n\|=0.
$
Hardy \cite{Hardy} (c.f. \cite{Pisot:38}) proved for any algebraic number \(\alpha\) greater 
than 1 and nonzero real number \(\xi\) that if 
$
\lim_{n\to\infty} \|\xi\alpha^n\| =0, 
$
then \(\alpha\) is a Pisot number and \(\xi\in\mathbb{Q}(\alpha)\). 
%
%

However, little is known on the exact value $\limsup_{n\to\infty}\|\xi\alpha^n\|$ in the case where 
$\limsup_{n\to\infty}\|\xi\alpha^n\|>0$ even when 
$\alpha$ is a Pisot number. 
We introduce a result by Dubickas \cite{Dubickas:06_2} in the case where $\alpha=a\geq 2$ is an integer.
Let 
\begin{align}\label{inf_prod}
{{E}}(X):=\frac{1-(1-X)\prod_{n=0}^{\infty}(1-X^{2^n})}{2X}
\end{align}
and $\eta_0:=a^{-1}E(a^{-1})\not\in \mathbb{Q}$. Then he showed that 
$\limsup_{n\to\infty}\|\xi a^n\|\geq \eta_0$ for any irrational $\xi$, and $\limsup_{n\to\infty} \|\eta_0 a^n\|=\eta_0$. 
In view of \cite{Nishioka}, the values of ${{E}}(X)$ at any algebraic numbers $\alpha$ with $0<|\alpha|<1$ are transcendental, 
which gives nice contrast to our results in \S 4.

In this paper we investigate $\limsup_{n\to\infty}\|\xi\alpha^n\|$ \textcolor{red}{for a Pisot number $\alpha$}
by studying the geometrical property of $\LL(\alpha)$. 
We mainly consider 
the two simplest cases: $\alpha$ is an integer $\alpha=a\ge 2$ or a quadratic unit
$$
\alpha=\begin{cases}
(b+\sqrt{b^2-4})/2 & 3\le b \in \Z\\
(b+\sqrt{b^2+4})/2 & 1\le b \in \Z.
\end{cases}
$$

\subsection{Results and organization of the paper} 
In \S \ref{SD} we prepare our basic setup 
and show {\bf an exact analogy of (i)} that $\LL(\alpha)$
is closed in Theorem \ref{Closed} for any Pisot number $\alpha$. 
The idea of the proof is to select a bi-infinite word whose all
central blocks reappear infinitely many times in the limiting process, and 
glue the blocks together to make another infinite word without changing the limsup value.
The method also gives a short alternative proof of 
the result by Cusick \cite{Cusick:75} that $\L$ is closed (see Remark \ref{CusickAlt}). 
In \S \ref{HD}, 
an upper bound of the Hausdorff dimension of $\LL(\alpha)$ for the above
cases are shown.

We shall give {\bf a complete analogy of (ii) for the above two cases}. 
When $\alpha$ is an integer, 
Theorem \ref{disc_integer} gives the discrete part of the spectrum, as a small refinement 
of Dubickas \cite{Dubickas:06_2}. 
When $\alpha>3$ is a quadratic unit, the discrete part is described by 
negative continued fraction.
The results are summarized as Theorems \ref{thm:main} and
\ref{thm:main2}.
\S \ref{LBW} and \S \ref{Proofs} are devoted to their proofs. 
Our problems are
transfered into the ones in symbolic dynamics. Note that the corresponding dynamics 
is a one-sided shift when $\alpha$ is an integer, and is a two-sided shift when $\alpha$ is a
quadratic unit.  

To our surprise, in the course of the proof, we rediscover a shift space 
$\X_F$ of bi-infinite words 
over $\{0,1\}$
studied by Markoff \cite[pp.397]{Markoff:79},  defined by a set $F$ 
of forbidden subwords of the form:
$$
0w01\tilde{w}1, 1w10\tilde{w}0.
$$
Here $\tilde{w}$ is the mirror word of $w$. 
This space $\X_F$ coincides with the one 
appeared in classical Lagrange spectrum: 
the set of bi-infinite {\bf balanced words}.
This coincidence (Theorem \ref{Balanced}) is
stated without proof in Cusick-Flahive \cite{Cusick-Flahive:89}: 
a written proof is given by Reutenauer \cite{Reutenauer:06} using the idea of Sturmian morphism in \cite{Lothaire:02}, 
see also Yasutomi \cite[Lemma 14]{Yasutomi}.
The discrete part of $\LL(\alpha)$ corresponds to Christoffel words, the coding of 
rotation of rational slope, and the proof is finished by using 
a certain unavoidable symmetry of sturmian words. Similar phenomena
are observed in the study of Lagrange spectrum, see \cite{Bombieri:07,Reutenauer:09}.
In summary, the problem of describing the discrete part of the classical  Lagrange spectrum 
is transfered to an optimization problem of certain values defined 
on $\X_F$. In the case of $\LL(\alpha)$, it corresponds to 
a different optimization problem on $\X_F$ of similar flavor.
The map intertwining the discrete spectrum of $\LL(\alpha)$ and a shift space 
is quite different from the map used to study the discrete part of $\L$.


Finally we show 
in Theorem \ref{Int} that 
$\LL(\alpha)$ contains
a proper interval for a quadratic unit $\alpha>3$, which is {\bf an analogy of (iii)}. 
Our idea in brief: because it is a problem on two \textcolor{red}{sided} shift, there is a room to
recode symmetric beta expansions in base $\alpha$ to bi-infinite sequences.
Note that if $\alpha$ is an integer, then the analogy does not show because 
$\LL(\alpha)$ is of Lebesgue measure zero as in Theorem \ref{Hdim0}. 
This difference essentially comes from the nature of the corresponding symbolic dynamics whether it is two-sided
or one-sided. See the remark before Theorem \ref{Hdim0} as well.

We introduce notation on words and shift spaces which we use throughout the paper. 
Let $\B$ be a non empty finite set and $\B^*$ the free monoid generated by $\B$ with the binary operation of concatenation. 
The empty word $\lambda$ is the identity and put $\B^+=\B^*\setminus \{\lambda\}$.
Let $\B^{\Z}$ (resp. $\B^{\N}$)  be the set of right infinite (resp. bi-infinite) words 
equipped with the product topology of discrete topologies on $\B$. 
Then $\B^{\N}$ and $\B^{\Z}$
are compact spaces invariant under the shift map $\sigma((s_i))=(s_{i+1})$. 
The topological dynamical system $(\B^{\N},\sigma)$ (resp. $(\B^{\Z},\sigma)$) 
is called one-sided (resp. two-sided) full shift.

\section{Basic setup: embedding to symbolic dynamics}
\label{SD}
\subsection{Closedness of $\LL(\alpha)$ for general Pisot number $\alpha$}
A complete homogeneous symmetric polynomial in $d$ variables is defined by
\[
h_m(X_1,\ldots,X_d):=\sum_{i_1,\ldots,i_d\geq 0 \atop i_1+\cdots+i_d=m} 
X_1^{i_1}\cdots X_d^{i_d}
\]
for an integer $m\geq 0$. We can extend this definition to $m\in \Z$ through the equality:
\begin{equation}
\label{SymP}
h_m(X_1,\ldots,X_d)
=
\sum_{j=1}^d\left(
\prod_{1\leq k\leq d\atop k\ne j} \frac{X_j}{X_j-X_k}
\right)
X_j^m,
\end{equation}
whose consistency proof is found in 
\cite[Problem 7.4]{Stanley2}, \cite[Appendix A]{Louck-Biedenharn}, \cite[Theorem 1.7]{Gustafson-Milne} and \cite[Lemma 3.1]{Kaneko:09}.
We shall use the Euler identity:
$$
h_m(X_1,\ldots,X_d)=0\quad \text{for} \quad-d+1\le m\le -1.
$$
This holds because $h_m(X_1,\ldots,X_d)\prod_{i<j}(X_i-X_j)$ is an alternating polynomial of degree less than $d(d-1)/2$
(c.f. \cite{Takagi:65}, \cite[Exercises,Chapter 4]{Bourbaki_AlgebraII},\cite{Louck-Biedenharn},\cite{Kaneko:09}).

For any real number \(\xi\), there exist a unique integer 
\(u(\xi)\) and a real number \(\varepsilon(\xi)\in [-1/2,1/2)\) with 
\begin{eqnarray*}
\xi=u(\xi)+\varepsilon(\xi).
\end{eqnarray*}
Note that \(|\varepsilon(\xi)|=\|\xi\|\).
Let $\alpha$ be a Pisot number whose minimal polynomial is $X^d+a_{d-1}X^{d-1}+\cdots+a_0\in \Z[X]$.
Denote the conjugates of $\alpha$ except itself by $\alpha_2,\ldots,\alpha_d$.
Set
\begin{align}\label{Eqn:1}
s_m=s_m(\alpha;\xi)
:=u(\xi\alpha^{m+1})+a_{d-1}u(\xi\alpha^m)+\cdots+a_0u(\xi\alpha^{m+1-d}).
\end{align}
Since
\begin{align}\label{Eqn:1b}
s_m=-\varepsilon(\xi\alpha^{m+1})-a_{d-1}\varepsilon(\xi\alpha^m)-\cdots-a_0\varepsilon(\xi\alpha^{m+1-d}),
\end{align}
we have $s_m\in \B:=[-B,B]\cap \Z$ with $B=(1+\sum_{i=0}^{d-1} |a_i|)/2$.

First we consider a geometric progression whose common ratio is an integer $\alpha=a\geq 2$. 
The sequence $(s_n)_{n\in \N}$ is bounded by $A:=(1+a)/2$ in modulus.
We have
\begin{align}
\label{Companion}
\varepsilon(\xi a^n)
=
\frac {s_{n+1}}{a} +
\frac {s_{n+2}}{a^2}+
\frac {s_{n+3}}{a^3}+
\cdots=:(s_{n+1} s_{n+2} s_{n+3}\ldots)_a. 
\end{align}
Conversely, let $(s_n)_{n\in \N}$ be a bounded sequence of integers. It is easily seen for any $n\geq 0$ that 
\begin{equation}
\label{Realization}
\varepsilon(\xi a^n)
\equiv 
(s_{n+1} s_{n+2} s_{n+3}\ldots)_a
\pmod{\Z},
\end{equation}
where $\xi=(s_1 s_2 s_3\ldots)_a$. 
Problems on $(\varepsilon(\xi a^n))_{n\in \N}$ are viewed as 
the ones on the one-sided shift $(\B^{\N},\sigma)$ with $\B=[-A,A]\cap \Z$.
In \S \ref{Discrete_Int} we study sequences $(s_n)_{n\in \N}\in \B^{\N}$ that 
$\limsup_{k\to \infty} |(\sigma^k((s_n)))_{a}|$ are small. 

Next, we consider the case that $\alpha$ is an irrational Pisot number. 
Note that 
\begin{align}\label{zerocond}
s_{-m}=s_{-m}(\alpha;\xi)=0
\end{align}
for sufficiently large $m$. 
%
Our key formula in this paper is
\begin{align}\label{new1}
\varepsilon(\xi\alpha^n)
=\sum_{q=-\infty}^{\infty} s_{n+q}\tau_q,
\end{align}
where $\tau_q$ is defined by\footnote{This definition follows from an analogous formula to \cite[(4.5)]{Kaneko:09}:
$$
\varepsilon(\xi\alpha^n)
=\frac{1}{\alpha}\sum_{i=0}^{\infty}\sum_{j=0}^{\infty} \alpha^{-i}h_j(\alpha_2,\alpha_3,\ldots,\alpha_d) s_{n+i-j}.$$}
\begin{align*}
\tau_q=\begin{cases}
\displaystyle{
\sum_{j=2}^d\left(
\prod_{2\leq k\leq d\atop k\ne j} \frac{\alpha_j}{\alpha_j-\alpha_k}
\right)
\frac{\alpha_j^{-q}}{\alpha-\alpha_j}
}
&
(q\leq 0),
\vspace{1.5mm}
\\
\displaystyle{
\sum_{j=2}^d\left(
\prod_{2\leq k\leq d\atop k\ne j} \frac{\alpha_j}{\alpha_j-\alpha_k}
\right)
\frac{\alpha^{-q}}{\alpha-\alpha_j}
}
&
(q>0).
\end{cases}
\end{align*}
Conversely, for any given bounded sequence $(s_n)_{n\in \Z}$ of integers 
such that $s_{-n}=0$ for sufficiently large integer $n$, we have
\begin{align}\label{new2}
\varepsilon(\xi\alpha^n)
\equiv 
\sum_{q=-\infty}^{\infty} s_{n+q}\tau_q
\pmod{\Z}, 
\end{align}
where 
\[
\xi=\xi((s_n)_{n\in \Z})
=\sum_{q=-\infty}^{\infty} s_{q}\tau_q.
\]
The formulae (\ref{new1}) and (\ref{new2}) are shown 
in \cite{Kaneko:09} in a more general setting.  For the convenience of the reader, 
we give a short direct proof. We observe
\begin{equation}
\label{Brief}
R_q:=\sum_{j=2}^d\left(
\prod_{2\leq k\leq d\atop k\ne j} \frac{\alpha_j}{\alpha_j-\alpha_k}
\right)
\frac{\alpha^{-q}-\alpha_j^{-q}}{\alpha-\alpha_j}=h_{-q-1}(\alpha_1,\alpha_2.\dots,\alpha_d)
\end{equation}
which follows from $h_{-1}(\alpha_1,\alpha_2,\dots,\alpha_d)=0$. Then
$R_q=0\ (q=0,\dots, d-2)$,
$R_{-1}=1$ and (\ref{Eqn:1b}) implies (\ref{new1}) and $R_q\in \Z$ for $q\le 0$ implies (\ref{new2}), which finishes 
the proof.
We write
$$
((s_n)_{n\in \Z})_{\alpha}:=\sum_{q=-\infty}^{\infty} s_{n+q}\tau_q.
$$
Since $\alpha$ is a Pisot number, the infinite sequence $((s_n))_{\alpha}$ converges 
for any bi-infinite bounded integer sequence $(s_n)_{n\in \Z}$. We 
define a realization map $$g((s_n)_{n\in \Z})=((s_n)_{n\in\Z})_\alpha\in \R$$
and use this map even when (\ref{zerocond}) does not hold in \S \ref{QU}. Because 
$|\alpha_j|<1$ for any $2\leq j$, the value $s_{j}$ for $j\le n_0$ has no influence to the limsup value
for any fixed $n_0$,
we embed our problem on $\limsup_{n\to \infty} \|\xi \alpha^n\|$ to the one on symbolic 
dynamics $(\B^{\Z},\sigma)$  in light of (\ref{new1}) and (\ref{new2}).

In the case where $\alpha=a$ is an integer, putting 
\[
\tau_q:=
\begin{cases}
0 & (q\leq 0)\\
a^{-q} & (q>0),
\end{cases}
\]
we see (\ref{new1}) and (\ref{new2}) are valid, which gives a unified proof for the following theorem. 

\begin{theorem}
\label{Closed}
Let $\alpha$ be a Pisot number. Then 
$\LL(\alpha)$ 
is a closed subset of $[0,1/2]$.
\end{theorem}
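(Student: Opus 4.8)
The plan is to realize $\LL(\alpha)$ as the image of a compact shift space under a continuous map, which immediately gives closedness — except that the naive map is \emph{not} compatible with the $\limsup$, so the real work is a gluing argument in the style described in the introduction. Concretely, fix $\xi\in\R$ with $\ell:=\limsup_{n\to\infty}\|\xi\alpha^n\|$. By \eqref{Eqn:1b} (and \eqref{zerocond} in the irrational case), the associated sequence $(s_n)=(s_n(\alpha;\xi))$ lies in $\B^{\Z}$ (or $\B^{\N}$), and by \eqref{new1}--\eqref{new2} we have $\|\xi\alpha^n\|=|((s_n)_{n\in\Z})_\alpha \text{ shifted by } n|$, i.e. $\|\xi\alpha^n\| = |(\sigma^n (s_n))_\alpha|$ where $(\cdot)_\alpha = g\circ$ (appropriate projection). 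The point is that $\ell = \limsup_{n\to\infty} |g(\sigma^n(s))|$, and I want to produce a single $\eta\in\R$ with $\limsup_{n\to\infty}\|\eta\alpha^n\| = \ell$ achieved — actually we want to show $\ell\in\LL(\alpha)$, which it already is by definition, so what we really must show is that a \emph{limit} of values $\ell_k\in\LL(\alpha)$ again lies in $\LL(\alpha)$.

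So let $\ell_k = \limsup_n \|\xi_k\alpha^n\| \to \ell$, with corresponding sequences $s^{(k)}\in\B^{\Z}$. First I would pass to a subsequence so that the $s^{(k)}$ converge in the product topology to some $s^{(\infty)}\in\B^{\Z}$; this uses compactness of $\B^{\Z}$. The map $g$ is continuous (the series $\sum_q s_{n+q}\tau_q$ converges uniformly over $\B^{\Z}$ because $\sum_{q\le 0}|\tau_q|<\infty$, as $|\alpha_j|<1$, and $\sum_{q>0}|\tau_q|<\infty$, as $\alpha>1$), so $g(\sigma^n s^{(\infty)})$ is controlled by finitely many coordinates of $s^{(\infty)}$. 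However, $s^{(\infty)}$ need not satisfy $\limsup_n|g(\sigma^n s^{(\infty)})| = \ell$: the coordinate-wise limit only sees the behavior near the origin, so we may get something strictly smaller. This is where the \textbf{gluing construction} enters, and it is the main obstacle.

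The idea, following the introduction, is: for each $k$ choose indices $n_k$ with $|g(\sigma^{n_k} s^{(k)})|$ close to $\ell_k$ (within $1/k$, say), and extract from each $s^{(k)}$ a \emph{long central block} $w_k$ of length $2L_k$ centered at $n_k$, with $L_k\to\infty$, such that $w_k$ "controls" the value $|g(\sigma^{n_k}s^{(k)})|$ up to an error tending to $0$ — again by uniform convergence of the defining series, a block of length $2L$ pins down $g$ of any completion of it to within $\sum_{|q|>L}|\tau_q|\cdot B \to 0$. Moreover $w_k$ reappears infinitely often as we slide: actually more simply, since $n_k$ realizes (nearly) a $\limsup$ for $s^{(k)}$, for each $k$ there are infinitely many $m$ with $|g(\sigma^m s^{(k)})|$ close to $\ell_k$, hence infinitely many positions carrying a block "close to" $w_k$. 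Now concatenate: build a single bi-infinite word $\eta$-sequence $t\in\B^{\Z}$ (or $\B^{\N}$) by placing longer and longer stretches, alternately copying $w_1, w_2, w_3,\dots$, with enough "padding" between consecutive blocks (padding value $0$ works, or repeat a fixed block) that the blocks do not interfere — the interference is again bounded by the tail $\sum_{|q|>L}|\tau_q|$. Because each $w_k$ appears in $t$ infinitely often (we insert countably many copies of each), and because the true value of $|g(\sigma^m t)|$ at the center of a copy of $w_k$ differs from $|g(\sigma^{n_k}s^{(k)})| \approx \ell_k$ by at most a vanishing error, we get $\limsup_m |g(\sigma^m t)| \ge \limsup_k (\ell_k - o(1)) = \ell$. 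Conversely, by choosing the padding large and the blocks "well-separated", every window of $t$ either sits inside (a neighborhood of) some $w_k$ — contributing at most $\ell_k + o(1) \le \ell + o(1)$ — or sits in padding, contributing $o(1)$; hence $\limsup_m|g(\sigma^m t)|\le \ell$. Setting $\eta := g(t)$ (with the one-sided convention in the integer case, and using \eqref{new2}, $\xi(t)$ in the irrational case) gives $\limsup_n\|\eta\alpha^n\| = \ell$, so $\ell\in\LL(\alpha)$.

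Finally, $\LL(\alpha)\subseteq[0,1/2]$ is immediate since $\|x\|\in[0,1/2]$ always. The uniform summability $\sum_{q\in\Z}|\tau_q| < \infty$ underlies every error estimate above; in the integer case it is transparent from the explicit $\tau_q = a^{-q}$, $q>0$ (and $0$ otherwise), giving a cleaner one-sided version. I expect the delicate point to be the bookkeeping in the gluing: making precise "block $w_k$ controls the value up to $o(1)$" and "padding kills interference" simultaneously, and verifying the matching lower and upper bounds for $\limsup_m|g(\sigma^m t)|$ — essentially a two-sided (resp. one-sided) analogue of Cusick's argument that the Lagrange spectrum is closed, as the authors remark.
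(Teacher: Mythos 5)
Your overall strategy (extract long central blocks on which the limsup is nearly attained, concatenate them into a single word, and verify matching upper and lower bounds for the limsup of the glued word) is the same as the paper's, and your lower bound is fine. The genuine gap is in the upper bound, precisely at the point you flag as "padding kills interference": it does not. Consider a position $m$ of the glued word $t$ sitting $r$ places before the right edge of a copy of $w_k$. Then $g(\sigma^m t)$ agrees, up to $O(\sum_{|q|>L_k}|\tau_q|)$, with the $g$-value of the word obtained by \emph{truncating} $s^{(k)}$ after position $n_k+j+r$ and continuing with the padding, and this differs from the genuine value $g(\sigma^{n_k+j}s^{(k)})$ by as much as $B\sum_{q>r}|\tau_q|$ — a quantity that does \emph{not} tend to $0$ as the blocks get long, because $r$ ranges over all values down to $0$ no matter how large $L_k$ is. Concretely, for $\alpha=a=10$ a block ending in the digit $5$ followed by zero padding produces a shifted value $(50^\infty)_a=1/2$ at the edge, even though in $s^{(k)}$ that digit was followed by digits that pulled the value down to $\ell_k<1/2$. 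So the glued word can have $\limsup_m\|g(\sigma^m t)\|$ strictly larger than $\ell$, and your construction then certifies membership of some $\ell'>\ell$ rather than of $\ell$. Choosing the padding to be a repeated fixed block does not help; the problem is the junction itself.

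The paper's proof is built to avoid exactly this truncation artifact. It first performs a more careful diagonal selection producing a \emph{limit recurrent} word $(s_n)$: not merely a subsequential limit of the $s^{(k)}$ (your extraction), but one each of whose central blocks $t(\ell)=s_{-\ell}\dots s_\ell$ occurs infinitely often in the approximating sequences $(s[j]_n)$, and moreover at positions where the $g$-value is large. Recurrence then supplies return words $a_\ell$ with $t(\ell)a_\ell t(\ell)$ a genuine factor of some $(s[j]_n)$, and the glued word $t(\ell)s_{\ell+1}a_{\ell+1}t(\ell+1)s_{\ell+2}a_{\ell+2}t(\ell+2)\dots$ (using that $t(\ell)s_{\ell+1}$ is a suffix of $t(\ell+1)$) has the property that every sufficiently long, sufficiently late window is an honest factor of some $(s[j]_n)$ occurring at a late position — hence its $g$-value is at most $\eta_j+o(1)$ with $\eta_j\to\eta$. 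No window ever sees a digit string that did not occur in one of the original sequences, which is what makes the upper bound go through. To repair your argument you would need to replace the zero padding by such return-word gluing (or otherwise prove that the cut points can be chosen so that truncation does not increase the value, which is not true in general).
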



\begin{proof}
Let $\eta$ be a limit of the sequence $(\eta_j)_{j\in \N}$ with $\eta_j= \limsup_{n\to \infty} \|\xi_j \alpha^n\|$. 
We may assume that $\eta_j\neq \eta$ for all $j$. 
For each $\xi_j$, the sequence $(s[j]_n)_{n\in \Z}\in \B^{\Z}$ is defined by (\ref{Eqn:1}). 
For a positive integer $K$, define
$$
W_K=\{ \sigma^k ((s[j]_n))\ |\ k,j \ge K \}  
$$
and the topological upper limit \cite[p.25]{Kechris:95}:
$$
W^+=\bigcap_{K=1}^{\infty} \overline{W_K},
$$
where $\overline{V}$ is the closure of $V$ with respect to the topology of $\B^{\Z}$.
We are interested in the set
$$
W=\{ (s_n)\in \B^{\Z}\ |\ |((s_n))_\alpha|=\eta, \text{ and } (s_n)\in W^+ \}.
$$
By definition $W$ is compact. Each element in $W$ is the limit of a sequence of a certain shift of 
subsequence of $(s[j]_n)$ having its $g$-value $\pm \eta$. 

We claim that $W$ is non empty, and moreover 
one can select $(s_n)\in W$ so that there exists a sequence $(j_m)_{m\in \N}$ 
of positive integers that every central block $s_{-\ell}\dots s_{\ell}$
appears infinitely often in $(s[j_p]_n)_{n\in \Z}$ for $p\ge \ell$.
Hereafter a sequence $(s_n)\in W$ having this property is called {\bf limit recurrent}.

We now show the claim above. Put $\varepsilon_j=|2(\eta-\eta_j)|$.
By $\eta-\varepsilon_{j}\le \eta_{j}-\varepsilon_{j}/2$ and 
$\limsup_{k\to \infty} |g(\sigma^{k}((s[j]_n)))|
=\eta_{j}$, the
set
$$
A(j):=
\{ k\in \N\ |\ |\ g(\sigma^{k}((s[j]_n)))|>\eta-\varepsilon_{j} \} 
$$
is infinite. 
Let $m(0,j)=j$ and we shall define $s_{-\ell}\dots s_{\ell}\in \B^{2\ell+1}$ and 
a sequence $(m(\ell,j))_{j\in \N}$ for $\ell\in \N$ so that 
$(m(\ell+1,j))_j$ is a subsequence of $(m(\ell,j))_j$.
Assume that $(m(\ell,j))_j$ is already defined and the set
$$
\left\{n\in A(m(\ell,j))\ 
\left|\ s[m(\ell,j)]_{n-\ell+1}\dots s[m(\ell,j)]_{n+\ell-1}=s_{-\ell+1}\dots s_{\ell-1} 
\right.\right\}$$
is infinite\footnote{
This condition is trivially valid when $\ell=0$.} for all $m(\ell,j)$.
We choose a word $s_{-\ell}^{m(\ell,j)}\dots s_{\ell}^{m(\ell,j)}\in \B^{2\ell+1}$ satisfying 
$s_{-\ell+1}^{m(\ell,j)}\dots s_{\ell-1}^{m(\ell,j)}=s_{-\ell+1}\dots s_{\ell-1}$
so that
$$
\left\{ n\in A(m(\ell,j))\ \left|\ s[m(\ell,j)]_{n-\ell}\dots s[m(\ell,j)]_{n+\ell}=s_{-\ell}^{m(\ell,j)}\dots s_{\ell}^{m(\ell,j)} \right.\right\}
$$
is infinite.
Then we select a subsequence $(m(\ell+1,j))_j$ of $(m(\ell,j))_j$ and $s_{-\ell}\dots s_{\ell}\in \B^{2\ell+1}$
with the property that 
for every $j$ the set
$$
\left\{n\in A(m(\ell+1,j))\ 
\left|\ s[m(\ell+1,j)]_{n-\ell}\dots s[m(\ell+1,j)]_{n+\ell}=s_{-\ell}\dots s_\ell \right.\right\}
$$
is infinite. We continue this selection inductively on $\ell$. Then the sequence $(s_{n})_{n\in \Z}$
should satisfy $((s_n))_{\alpha}=\eta$ and every central block $s_{-\ell}\dots s_\ell$ appears infinitely
often in $(s[m(p,j)]_n)$ for $p>\ell$, which implies the claim.

Let us fix a limit recurrent $(s_n)\in W$. If 
$\limsup_{k\to \infty} |\sigma^k((s_n))_{\alpha}|=\eta$, defining
$$
s'_n=\begin{cases} s_n & n\ge 0 \\
                   0  & n<0\end{cases}
$$
we obtain $\eta=\limsup_{n \to \infty} \|\xi \alpha^n\|$ for
 $\xi=((s'_n))_\alpha$.
The proof becomes tricky when 
$
\limsup_{k\to \infty} |\sigma^k((s_n))_{\alpha}|<\eta.
$
Let $t(\ell)=s_{-\ell}\dots s_{\ell}$.
Since $(s_n)$ is limit recurrent, $t(\ell)$ appears infinitely often in $(s[m(p,j)]_n)$
for $p>\ell$. Thus there exists $a_{\ell} \in \B^*$
that $t(\ell) a_{\ell} t(\ell)$ is a factor of $(s[m(p,j)]_n)$. 
We construct a word $(x_n)$ that $x_n=0$ for $n<0$ and
$$
x_{0}x_{1}\dots = t(\ell)
s_{\ell+1} a_{\ell+1} t(\ell+1)
s_{\ell+2} a_{\ell+2} t(\ell+2)s_{\ell+3} a_{\ell+3} 
\dots
$$
Since $\lim_{\ell\to\infty} |(t(\ell))_\alpha|=\eta$,
we have $\limsup_{k\to \infty} |g(\sigma^k((x_n)))|\ge \eta$.
By construction,  the subword 
$x_i\dots x_{i+\ell+k}$ for $i \ge \sum_{j=0}^{k-1} |t(\ell+j)s_{\ell+j+1}a_{\ell+j+1}|$ is
a factor of $(s[m(p,j)]_n)$ for $p>\ell+k$. 
Since we may choose arbitrary large $k$, this implies that 
$\limsup_{k\to \infty} |g(\sigma^k((x_n)))|\le \eta$, finishing the proof.

\end{proof}


\begin{rem}
\label{CusickAlt}
\begin{rm}
The method for Theorem \ref{Closed} gives an alternative proof that the classical Lagrange spectrum $\L$ is closed. 
In fact, set
\[[a_0;a_1,a_2,a_3\ldots]:=a_0+\cfrac{1}{a_1+\cfrac {1}{a_2+ \cfrac{1}{a_3+ \cfrac {1}{\ddots} } }}.\]
For a bi-infinite sequence $\ba=(a_n)_{n\in \Z}$ of positive integers, define
\[
\varphi(\ba):=\limsup_{n\to\infty}(
[a_n;a_{n+1},a_{n+2},\ldots]+[0;a_{n-1},a_{n-2},\ldots]
).
\]
It is well known (see \cite{Cusick-Flahive:89,Bombieri:07}) that 
\[\L=\bigcup_{A=1}^{\infty}\{
\varphi(\ba)\mid \ba=(a_n)_{n\in \Z}, 1\leq a_n\leq A \mbox{ for all }n
\}=:\bigcup_{A=1}^{\infty} \L_A.\]
In the same way as the proof of Theorem \ref{Closed}, we see that $\L_A$ is closed for any $A\geq 1$
and thus $\L$ is closed, because, for any positive integer $A$, we have $\L\cap [0,A]=\L_A \cap [0,A]$. 
\textcolor{red}{Indeed the closedness proofs of $\LL(\alpha)$ and $\L$ have some part in common. 
The key idea in Cusick \cite{Cusick:75} is to show that each element of $\L$ is well 
approximated by purely periodic sequences. One can confirm that such purely periodic sequences are easily produced by limit recurrent words as well.}
\end{rm}
\end{rem}


\subsection{Upper bounds for Hausdorff dimension}
\label{HD}
We write down a concrete form of $((s_n))_\alpha$ in the case when $\alpha=(b+\sqrt{b^2\mp 4})/2$ is a quadratic Pisot unit.
For each real \( \xi \) and integer \(n\), we have 
\begin{align}\label{Eqn:2}
s_n=u( \xi  \alpha^{n+1})-b u( \xi  \alpha^{n})
\pm u( \xi \alpha^{n-1})=-\varepsilon( \xi  \alpha^{n+1})+b \varepsilon( \xi  \alpha^{n})
\mp \varepsilon( \xi \alpha^{n-1})
\end{align}
and $|s_n|\leq B$, where $B=(2+b)/2$. 
Then we see that $((s_n))_\alpha$ is represented as 
\begin{align}
\label{Eqn:3}
\frac{1}{\alpha-\alpha_2}
\left( \dots +s_{n-2}\alpha_2^2+s_{n-1} \alpha_2 + s_{n}+ \frac {s_{n+1}}{\alpha} +
\frac {s_{n+2}}{\alpha^2}+\dots \right).
\end{align}



If $\alpha$ is an integer (resp. a quadratic Pisot unit), then $(s_n)$ represents $(s_n)_{n\in \N}\in \B^{\N}$ 
(resp. $(s_n)_{n\in \Z}\in \B^{\Z}$). 
For a given $\xi\in \R$, let $\eta=\limsup_{n\to \infty} \|\xi \alpha^n\|$ and $(s_n)$ be 
an infinite sequence corresponding to $\xi$. 
Then we find
a subsequence $0\le n_1<n_2<\dots$ that $\eta=\lim_{j\to \infty} \|\xi \alpha^{n_j}\|$.
By a similar but simpler discussion as in the above proof, 
in the sequence $(\sigma^{n_j}((s_n)))_{j=1,2,\dots}$ we can
select a converging subsequence and obtain a limit $(w_n)$ by the 
topology of $\B^{\N}$ or $\B^{\Z}$.
By construction, we have
\begin{equation}
\label{Limsup}
|((w_n))_{\alpha}|=\eta,\qquad \big|\big(\sigma^k ((w_n))\big)_{\alpha}\big|\le \eta \text{ for all } k,
\end{equation}
where if $\alpha$ is an integer (resp. a quadratic Pisot unit), then $k$ moves through positive integers (resp. integers). 
We call this word $(w_n)\in \B^{\N} \cup \B^{\Z}$ a {\bf limsup word} for $\xi$.
\begin{rem}\label{rem:limsup}
A limsup word $(w_n)$ is  not generally uniquely determined by $(s_n)$. Each finite subword of any limsup word  appears infinitely many times in $(s_n)$.
This makes our later discussion significantly simpler.
\end{rem}
We give several applications of limsup words in the sequel.
Let us denote the Hausdorff dimension of $Z\subset \R$ by $\mathrm{dim}_H(Z)$.
Hereafter we study $\mathrm{dim}_H \left(\LL(a)\cap [0,t]\right)$ when 
$\alpha$ be an integer $a\ge 2$ or a quadratic Pisot unit $(b+\sqrt{b^2\mp 4})/2>3$. 

We give an important remark : the limsup word is two-sided when $\alpha$ is a quadratic unit, and it 
does not satisfy (\ref{zerocond}). This means we can not apply (\ref{new2}).
When $\alpha$ is an integer, the limsup word is one-sided and this problem does not happen (see (\ref{Realization})).
This difference makes Theorem \ref{Hdim0} sharper than Theorem \ref{Hdim}.
We will see later that the difference is really large in Theorem \ref{Int}: 
for a quadratic unit $\alpha>3$, $\LL(\alpha)$ contains a proper interval.
First we consider the case of $\alpha=a\in \Z$.

\begin{theorem}
\label{Hdim0}
$$
\mathrm{dim}_H \left(\LL(a)\cap [0,t]\right)\leq\log(1/2)/\log(t)
$$
for any integer $a\geq 2$. In particular, $\LL(a)$ has Lebesgue measure zero.
\end{theorem}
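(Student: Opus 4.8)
The plan is to bound the Hausdorff dimension of $\LL(a)\cap[0,t]$ by constructing an explicit countable cover with controlled diameters, using the fact that every element of $\LL(a)$ is realized as $|((w_n))_a|$ for some limsup word $(w_n)\in\B^\N$. First I would recall from (\ref{Limsup}) that for each $\eta\in\LL(a)\cap[0,t]$ with $\eta>0$ there is a one-sided word $(w_n)_{n\in\N}\in\B^\N$ with $|((w_n))_a|=\eta$ and $|(\sigma^k((w_n)))_a|\le\eta\le t$ for all $k\ge 0$. Since $(\sigma^k((w_n)))_a=(w_{k+1}w_{k+2}\ldots)_a=\sum_{i\ge 1}w_{k+i}a^{-i}$, the constraint $|\,(w_{k+1}w_{k+2}\ldots)_a\,|\le t$ holds \emph{for every} $k$; in particular each tail of $(w_n)$ is a base-$a$ representation (with digits in $\B=[-A,A]\cap\Z$) of a number in $[-t,t]$. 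The key point is that this uniform smallness of all tails forces the prefixes $w_1\ldots w_N$ to range over a set of size growing much slower than the trivial bound $(2A+1)^N$.

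The main step is a counting estimate: let $\mathcal{P}_N$ be the set of words $w_1\ldots w_N\in\B^N$ that are prefixes of some admissible limsup word, i.e.\ that extend to an infinite word all of whose tails represent numbers of modulus $\le t$. For each such prefix, the value $\eta=|((w_n))_a|$ lies in an interval of length $O(a^{-N})$ determined by the prefix alone (since changing $w_{N+1},w_{N+2},\ldots$ moves $(w_1w_2\ldots)_a$ by at most $\sum_{i>N}A a^{-i}=O(a^{-N})$). Hence $\{|((w_n))_a| : \text{admissible}\}$ is covered by $|\mathcal{P}_N|$ intervals of length $O(a^{-N})$. The dimension bound then follows if $|\mathcal{P}_N|\le C\,t^{N+o(N)}$, because then $\sum_{I\in\text{cover}}|I|^s \asymp t^{N}a^{-sN}\to 0$ whenever $a^{-s}<t^{-1}$... more precisely one wants $|\mathcal{P}_N|\cdot(Ca^{-N})^s\to 0$, i.e.\ $s>\log|\mathcal{P}_N|^{1/N}/\log a$, and with $|\mathcal{P}_N|^{1/N}\to 1/t$ this gives $\mathrm{dim}_H\le \log(1/t)/\log a$; combined with the elementary relation $a^{-N}\asymp$ (interval length) one re-expresses the exponent as $\log(1/2)/\log t$ using that $t\le 1/2$ always, so $\log(1/t)/\log a$ and $\log(1/2)/\log t$ must be reconciled via the normalization in the statement. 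The crucial combinatorial input is therefore the estimate $\limsup_N |\mathcal{P}_N|^{1/N}\le 1/t$, which should come from the following observation: if $w_1\ldots w_N$ and $w_1\ldots w_M$ with $M>N$ are both prefixes of admissible words and have the \emph{same} partial value modulo the $O(a^{-N})$ tail ambiguity, they can be identified; the number of distinct tail-values that can occur is governed by the requirement that the whole orbit stay in $[-t,t]$, which is a self-similar constraint of "multiplicative measure" roughly $t$ at each step.

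More carefully, I would set up the counting as a one-step recursion. Given that a tail $(w_{k+1}w_{k+2}\ldots)_a$ has value $v\in[-t,t]$, we have $v=w_{k+1}a^{-1}+a^{-1}v'$ where $v'=(w_{k+2}w_{k+3}\ldots)_a\in[-t,t]$, so $w_{k+1}=av-v'\in[av-t,av+t]$, an interval of length $2t$ containing at most $2t+1$ integers — but only those $w_{k+1}\in\B$ for which the resulting $v'=av-w_{k+1}$ again lies in $[-t,t]$ count. This is exactly the transfer operator / cylinder structure for the attractor of the iterated function system $x\mapsto (x+j)/a$ restricted to $[-t,t]$, and the number of length-$N$ cylinders meeting $[-t,t]$ is $\le (2t+o(1))^N\cdot$const. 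Feeding this into the mass distribution / covering argument yields $\mathrm{dim}_H(\LL(a)\cap[0,t])\le\log(2t)/\log a$; the statement's form $\log(1/2)/\log t$ then follows because the relevant regime is $t<1/2$ and one optimizes over the scale — I would double-check the exact exponent bookkeeping here, as the cleanest route may be to cover $\LL(a)\cap[0,t]$ directly at scale $t^N$ rather than $a^{-N}$, matching prefixes of length $\sim N\log a/\log(1/t)$.

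The Lebesgue-measure-zero conclusion is then immediate: as $t\uparrow 1/2$ we get $\mathrm{dim}_H(\LL(a)\cap[0,t])\le\log(1/2)/\log t\to\infty$, which is useless near $t=1/2$, so instead I would note $\LL(a)=\bigcup_{t<1/2}\LL(a)\cap[0,t]$ is a countable union of sets of dimension $<1$, hence of Lebesgue measure $0$; alternatively, directly, for any $t<1/2$ the set $\LL(a)\cap[0,t]$ has dimension $<1$ hence measure $0$, and $\{1/2\}$ plus this countable union is null. The main obstacle I anticipate is making the counting estimate $|\mathcal{P}_N|^{1/N}\to 1/t$ (equivalently $(2t)^N$ up to constants) fully rigorous, specifically controlling the boundary effects in the "$w_{k+1}\in[av-t,av+t]\cap\B$ and $av-w_{k+1}\in[-t,t]$" recursion and ruling out that accidental algebraic coincidences among the $\tau_q$-sums inflate the count — but since $\tau_q=a^{-q}$ here the geometry is the plain base-$a$ digit system, so this should be manageable. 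A secondary subtlety is that the limsup word need not be unique (Remark \ref{rem:limsup}), but this only helps: it suffices that \emph{some} limsup word exists and every such word satisfies (\ref{Limsup}), so the cover by admissible prefixes is valid.
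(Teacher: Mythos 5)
Your overall strategy --- pass to a limsup word, use that every shift has value in $[-t,t]$, and count admissible prefixes to build a cover at scale $a^{-N}$ --- is the same as the paper's, but the counting step contains a genuine gap, and it is exactly the step that matters. When you write that $w_{k+1}=av-v'$ lies in an interval of length $2t$ and hence admits at most $2t+1\le 2$ choices, you are holding the tail value $v$ fixed; but $\mathcal{P}_N$ ranges over \emph{all} admissible $\eta$, so $v$ itself sweeps out a subinterval, and the number of admissible children of a depth-$k$ cylinder is the number of depth-$(k+1)$ cylinders of $x\mapsto ax \bmod 1$ meeting $[0,t]\cup[1-t,1]$, which is about $2\lceil at\rceil$, not $2t+1$. (A sanity check: your estimate $|\mathcal{P}_N|\lesssim (2t)^N\to 0$ would make the set empty.) The honest output of this covering argument is $\mathrm{dim}_H(\LL(a)\cap[0,t))\le \log\left(2\lceil a^{\ell}t\rceil\right)/\log a^{\ell}$ for each block length $\ell\ge 1$, and this does \emph{not} reduce to $\log(1/2)/\log t$ except when $1/t$ is a power of $a$. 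Your proposed ``reconciliation'' of the exponents $\log(1/t)/\log a$ and $\log 2/\log(1/t)$ cannot work: for $a=2$, $t=1/4$ these are $2$ and $1/2$ respectively, so this is not bookkeeping but a real discrepancy.

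You are in good company: the paper's printed proof commits essentially the same fallacy, asserting that $\eta$ lies in the attractor of the ratio-$t$ system $Z=tZ\cup(tZ+1-t)$, whereas the covering by $a$-adic cylinders only yields a ratio-$a^{-\ell}$ system with $2\lceil a^{\ell}t\rceil$ branches; the authors append a ``Correction to Theorem 2.2'' at the end of the paper replacing the stated bound by $\log\left(2\lceil a^{\ell}t\rceil\right)/\log a^{\ell}$. The Lebesgue-measure-zero conclusion does survive in both your approach and the corrected one: for each fixed $t<1/2$ one may choose $\ell$ large enough that the bound is strictly less than $1$, and your countable-union argument over $t=1/2-1/n$ then finishes (though note that $\log(1/2)/\log t\to 1$, not $\infty$, as $t\uparrow 1/2$).
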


\begin{proof}
Let $t\in [0,1/2)$ and $\eta\in \LL(a)\cap [0,t]$. Choose a limsup word $(w_n)\in \B^{\N}$
for $\eta$. Then 
$$
(w_{n+1}w_{n+2}\dots)_a \in [-t,t],
$$
i.e., $\{ \eta a^n \} \in [0,t]\cup [1-t,1]$ holds for any $n\in \N$  by (\ref{Limsup}). 
Using appropriate coverings of $[0,t]$ and $[1-t,1]$
by cylinder sets of the $a$-adic expansion map $x\mapsto ax \pmod{1}$ on $[0,1]$, 
we see that 
$\eta$ belongs to the attractor $Z$ of the iterated function system
$$
Z=t Z \cup (t Z+1-t),
$$
where $\mathrm{dim}_H(Z)=\log(1/2)/\log(t)$. 
The latter statement follows from
$$
\LL(a)-\{1/2\}=\bigcup_{n=1}^{\infty} \left(\LL(a)\cap \left[0,\frac 12- \frac 1n\right] \right).
$$
\end{proof}



Next, we consider the case where $\alpha$ is a quadratic Pisot unit \textcolor{red}{$(b+\sqrt{b^2\mp 4})/2$.} 

\begin{theorem}
\label{Hdim}
We have $$
\mathrm{dim}_H \left(\LL(\alpha)\cap [0,t]\right)\le \frac {\log(4m+1)}{\log(\alpha)}$$
for $m=m(t)=\lfloor(2+b)t \rfloor$. In particular, $\LL(\alpha)\cap [0,t_0]$ has Lebesgue measure zero, 
where $t_0=(\alpha-1)/(4(b+2))(<1/4)$. 
\end{theorem}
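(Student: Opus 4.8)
The plan is to mimic the strategy of Theorem~\ref{Hdim0}: reduce the estimate to a covering of $\LL(\alpha)\cap[0,t]$ by cylinder sets of a symbolic coding, and then bound the number of such cylinders at each level. Fix $t\in[0,1/2)$ and $\eta\in\LL(\alpha)\cap[0,t]$, and choose a limsup word $(w_n)_{n\in\Z}\in\B^{\Z}$ for $\eta$, where $\B=[-B,B]\cap\Z$ with $B=(2+b)/2$. By~(\ref{Limsup}) we have $|(\sigma^k((w_n)))_\alpha|\le\eta\le t$ for every $k\in\Z$. Using the explicit form~(\ref{Eqn:3}) of $((s_n))_\alpha$, I would read off from $|(\sigma^k((w_n)))_\alpha|\le t$ a constraint on which digits $w_k$ are admissible; since the "future tail" $\frac1{\alpha-\alpha_2}(w_{k+1}/\alpha+w_{k+2}/\alpha^2+\cdots)$ and the "past tail" $\frac1{\alpha-\alpha_2}(\cdots+w_{k-2}\alpha_2^2+w_{k-1}\alpha_2)$ are both small in modulus (bounded by $B/((\alpha-\alpha_2)(\alpha-1))$ and similarly for the past), the central contribution $w_k/(\alpha-\alpha_2)$ is forced to lie within roughly $t$ plus those tail errors of an integer multiple; a direct count then shows there are at most $4m+1$ choices of $w_k$ compatible with $|(\sigma^k((w_n)))_\alpha|\le t$, where $m=\lfloor(2+b)t\rfloor=\lfloor(\alpha-\alpha_2+1)t\rfloor$ after noting $\alpha-\alpha_2=\sqrt{b^2\mp4}$ vs.\ the bound $b+2$ — I will need to be a little careful about which constant ($2+b$ or $\sqrt{b^2\mp4}$) actually governs the count, and the statement commits to $m(t)=\lfloor(2+b)t\rfloor$, so the crude bound $2+b$ should be used throughout.

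Next I would turn this digit restriction into a dimension bound. The realization map $g\colon\B^{\Z}\to\R$, $g((s_n))=((s_n))_\alpha$, is Lipschitz when $\B^{\Z}$ is given a metric in which changing the coordinate at position $\pm N$ costs $\alpha^{-|N|}$ (on the future side) or $|\alpha_2|^{|N|}=\alpha^{-|N|}$ (on the past side, using that $\alpha\alpha_2=\pm1$ so $|\alpha_2|=1/\alpha$); indeed~(\ref{Eqn:3}) shows $g$ is a convergent geometric-type series with ratio $1/\alpha$ in both directions. The set of $\eta$ arising this way is contained in $g(Y)$ where $Y\subset\B^{\Z}$ is the subshift of sequences all of whose shifts satisfy the digit constraint above; at "level $N$" (fixing coordinates $-N,\dots,N$) there are at most $(4m+1)^{2N+1}$ admissible blocks, each mapping under $g$ into an interval of length $O(\alpha^{-N})$. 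A standard covering argument (as in the iterated-function-system picture of Theorem~\ref{Hdim0}) then gives $\mathrm{dim}_H(\LL(\alpha)\cap[0,t])\le\log(4m+1)/\log\alpha$. For the "in particular" clause, note that when $t\le t_0=(\alpha-1)/(4(b+2))$ we have $(2+b)t\le(\alpha-1)/4<1$, so $m(t)=0$ and the bound reads $\mathrm{dim}_H(\LL(\alpha)\cap[0,t_0])\le\log1/\log\alpha=0$; Lebesgue measure zero on $[0,t_0]$ follows, and the inequality $t_0<1/4$ is immediate from $\alpha-1<b+2$.

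The main obstacle I expect is the counting step: pinning down that exactly (or at most) $4m+1$ values of $w_k$ survive the constraint, because the two-sided tails make the admissible set for $w_k$ a union of intervals around several consecutive integers rather than a single interval, and the overlap/endpoint bookkeeping must be done so that the final count matches $4m+1$ with $m=\lfloor(2+b)t\rfloor$ and not something slightly larger. Concretely, from~(\ref{Eqn:3}) with $k$ in place of $n$, $|(\sigma^k((w_n)))_\alpha|\le t$ means $w_k$ lies within distance $(\alpha-\alpha_2)t$ of the negative of (future tail $+$ past tail), and those tails together range over an interval of half-length at most $B\bigl(\tfrac1{\alpha-1}+\tfrac1{\alpha|\alpha_2|^{-1}-1}\bigr)=\tfrac{2B}{\alpha-1}$; combining, $w_k$ ranges over at most $2\bigl((\alpha-\alpha_2)t+\tfrac{2B}{\alpha-1}\bigr)+1$ integers, and I would then simplify using $\alpha-\alpha_2\le b+2=2B$ and $\tfrac{2B}{\alpha-1}\le B$ (valid for $\alpha>3$) to land on $4m+1$. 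I would also double-check the one-sided "past" convergence radius, since it is $|\alpha_2|$ not $1/\alpha$ in general, but here equality $|\alpha_2|=1/\alpha$ holds precisely because $\alpha$ is a \emph{unit}, which is exactly the hypothesis we are using.
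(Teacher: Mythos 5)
Your overall strategy (restrict the digits of a limsup word, then cover the image) is the right one, but the two steps where the constant $4m+1$ is supposed to appear both fail as written. First, the per-digit count: your argument lets the two tails range over their crude a priori interval of half-length about $2B/(\alpha-1)$ and then counts integers $w_k$ within $(\alpha-\alpha_2)t$ of the negative of the tails, giving at most $2(\alpha-\alpha_2)t+4B/(\alpha-1)+1$, which is roughly $2m+b+5$. Since $t<1/2$ forces $m=\lfloor(2+b)t\rfloor\le (b+2)/2$, this is \emph{never} $\le 4m+1$; in the interesting regime $m=0$ it is about $b+5$ instead of $1$. The paper gets the sharp per-digit bound directly from (\ref{Eqn:2}): $s_n=-\varepsilon(\xi\alpha^{n+1})+b\varepsilon(\xi\alpha^n)\mp\varepsilon(\xi\alpha^{n-1})$ with each $\varepsilon$ of modulus at most $t$ for large $n$, so $|w_k|\le(2+b)t$, i.e.\ only $2m+1$ values per digit --- no tail bookkeeping is needed at all.

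Second, the covering exponent: fixing the $2N+1$ coordinates $w_{-N},\dots,w_N$ only pins down $g$ to an interval of length $O(\alpha^{-N})$, so counting $(4m+1)^{2N+1}$ blocks against scale $\alpha^{-N}$ yields $2\log(4m+1)/\log\alpha$, a factor of $2$ worse than claimed. The missing idea is to \emph{fold} the two-sided word into a one-sided one: since $|\alpha_2|=1/\alpha$, the terms $w_i\alpha^{-i}$ and $w_{-i}\alpha_2^{i}$ live at the same scale, so $\eta$ lies in $\frac{1}{\alpha-\alpha_2}\bigcup_{a_0=-m}^{m}\bigl(a_0+\alpha^{-1}Z_{2m}\bigr)$ with $Z_{2m}=\{\sum_{i\ge1}a_i\alpha^{-i}\mid a_i\in[-2m,2m]\cap\Z\}$, and $\dim_H Z_{2m}=\min\{1,\log(4m+1)/\log\alpha\}$. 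This folding is the actual source of the ``$4$'' in $4m+1$; it is not a per-digit count. Finally, your ``in particular'' step also has a slip: $(2+b)t\le(\alpha-1)/4$ does not imply $m=0$ (for $b\ge6$ one has $(\alpha-1)/4>1$); the correct deduction is $m<(\alpha-1)/4$, hence $4m+1<\alpha$ and the dimension bound is strictly less than $1$, which already gives Lebesgue measure zero.
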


\begin{proof}
For $k \in \N$, let 
$Z_{k}=\left\{ \sum_{i=1}^{\infty} a_i \alpha^{-i}\ |\ a_i\in [-k,k]\cap \Z \right\}$.
It is well known that $\mathrm{dim}_H Z_{k}=\min \{1, \log (2k+1)/\log \alpha\}$, see e.g.
\cite[Example 4.5]{Falconer:90}.
If \textcolor{red}{$\limsup_{n\to \infty} \| \xi \alpha^n\|< t$,} then 
from (\ref{Eqn:2}) 
we have
$$
|s_n(\alpha; \xi)|\le (2+b) t
$$
for sufficiently large $n$. 
By considering limsup words, this shows that $$
\textcolor{red}{\LL(\alpha)\cap [0,t)} \subset \textcolor{red}{\frac{1}{\alpha-\alpha_2}}\bigcup_{a_0=-m}^{m}
\left(
a_0+\alpha^{-1} Z_{2m}\right).
$$
Since Hausdorff dimension is preserved by finite or countable union, we obtain the former part. 
For any real number $0\leq t<t_0$, we get by the irrationality of $\alpha$ that 
$$
m(t)\leq 
\left\lfloor \frac{\alpha-1}{4}\right\rfloor<\frac{\alpha-1}{4},
$$
which implies the latter part. 
\end{proof}



Theorem \ref{Hdim} suggests that smaller the value $t$ we obtain a more sparse 
set $\LL(\alpha)\cap [0,t]$. 
We go further to the case when this set becomes discrete.
Our goal is to show that the limsup word has a lot of forbidden 
factors, and such a word does not exist, or exists but in a very special form 
if $t$ is sufficiently small.

\section{Discrete part of $\LL(\alpha)$: integer case}
\label{Discrete_Int}
Recall that $E(X)$ is defined by (\ref{inf_prod}).
For $k\geq 0$, we put 
\[
{{E}}^{(k)}(X):=\frac{1+X^{2^k}-(1-X)\prod_{m=0}^{k-1} (1-X^{2^m})}{2X(1+X^{2^k})},
\]
where $\prod_{m=0}^{k-1} (1-X^{2^m})=1$ if $k=0$. Note for any integer $a\geq 2$ that 
\[\lim_{k\to\infty}{{E}}^{(k)}\left(\frac{1}{a}\right)={{E}}\left(\frac{1}{a}\right).\]
Put 
\[\Xi(a):=\mathcal{L}(a)\cap\left(0,\frac{1}{a}{{E}}\left(\frac{1}{a}\right)\right].\]

\begin{theorem}\label{disc_integer}
For any integer $a\geq 2$, we have 
\[\Xi(a)=\left\{\left.\frac{1}{a}{{E}}^{(k)}\left(\frac{1}{a}\right) \ \right| \ k=0,1,2,\ldots\right\}\bigcup
\left\{ \frac{1}{a}{{E}}\left(\frac{1}{a}\right) \right\}.\]
\end{theorem}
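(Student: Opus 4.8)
The plan is to transfer the problem to symbolic dynamics as in Section~\ref{SD} and analyse the ``decoding'' of a value by its $a$-ary limsup word. Write $\theta_k:=\tfrac1aE^{(k)}(\tfrac1a)$ and $\eta_0:=\tfrac1aE(\tfrac1a)$, so $\theta_0=\tfrac1{a+1}$ and $\Xi(a)=\mathcal{L}(a)\cap(0,\eta_0]$. From $\prod_{m=0}^{k-1}(1-X^{2^m})=(1-X)\prod_{m=0}^{k-2}\big(1-(X^2)^{2^m}\big)$ a short manipulation of the defining fraction gives $E^{(k)}(X)=\big(1+X(1-X)E^{(k-1)}(X^2)\big)/(1+X)$ with $E^{(0)}(X)=\tfrac1{1+X}$, which at $X=1/a$ reads
\[
\theta_k(a)=\frac{1+(a-1)\,\theta_{k-1}(a^2)}{a+1},\qquad \theta_0(a)=\frac1{a+1}.
\]
I would first record, by an induction on $k$ uniform in the base, that $\theta_0(a)<\theta_1(a)<\cdots$, while $\theta_k(a)\to\eta_0(a)$ is the limit already stated before the theorem; also $E(1/a)<1$ (since $c_0=1$, $c_1=0$, $c_2=-1$ in $E(X)=\sum_{j\ge0}c_jX^j$), so $\eta_0<1/a\le1/2$ and hence $2\eta_0<1$, $(a+1)\eta_0<3/2$.

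Next, given $\eta\in\mathcal{L}(a)\cap(0,\eta_0]$ and a limsup word $(w_n)_{n\ge1}$, after replacing it by $(-w_n)$ I may assume $((w_n))_a=\sum_{q\ge1}w_qa^{-q}=\eta$ and $|((\sigma^k w))_a|\le\eta$ for all $k\ge0$. Put $\beta_k:=((\sigma^k w))_a$, so $\beta_0=\eta$ and $\beta_{k+1}=a\beta_k-w_{k+1}$. Since $w_{k+1}=a\beta_k-\beta_{k+1}$ we get $|w_{k+1}|\le(a+1)\eta_0<3/2$, i.e.\ $w_n\in\{-1,0,1\}$; and since $[a\beta_k-\eta,\,a\beta_k+\eta]$ has length $2\eta<1$ it contains at most one integer, so $(w_n)$ and the orbit $(\beta_k)$ are \emph{uniquely determined by $\eta$} so long as a legal next digit exists --- call this the \emph{decoding of $\eta$}. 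Moreover $w_1\ne0$ (else $\beta_1=a\eta$ with $|\beta_1|=a\eta>\eta$), and $|\beta_1|\le\eta<1/a$ then forces $w_1=1$, $\beta_1=a\eta-1<0$, and $1-a\eta\le\eta$, i.e.\ $\eta\ge\theta_0$; so $\mathcal{L}(a)\cap(0,\theta_0)=\varnothing$. Thus for $\eta\in(0,\eta_0]$: $\eta\in\mathcal{L}(a)$ iff the decoding of $\eta$ never stalls, and then $(w_n)$ is the decoded word.

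The core is to show the decoding of $\eta\in(0,\eta_0]$ survives forever precisely for $\eta\in\{\theta_k:k\ge0\}\cup\{\eta_0\}$. For $\eta\in(\theta_0,\eta_0]$ the inequalities $|\beta_1|,|\beta_2|\le\eta$ exclude $w_2=\pm1$ and admit a legal $w_2$ only when $\eta\ge\theta_1$ (then $w_2=0$); so the decoding of any $\eta\in(\theta_0,\theta_1)$ stalls, and for $\eta\ge\theta_1$ the prefix $w_1w_2=1\,0$ is forced with $\beta_2=a^2\eta-a\in[-\eta,0]$. Iterating this, organised by the self-similar recursion above, I would prove by induction on $k$: for $\eta\in(\theta_{k-1},\theta_k)$ the decoding stalls (within about $2^{k+1}$ steps), and for $\eta=\theta_k$ it is purely periodic of period $2^{k+1}$ with orbit passing through $\pm\theta_k$ --- the decoded word being $\big(C_k\overline{C_k}\big)^\infty$ with $C_k=c_0c_1\cdots c_{2^k-1}$ (one checks $E^{(k)}(X)=\big(\sum_{j<2^k}c_jX^j\big)/(1+X^{2^k})$). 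Since $\theta_k\nearrow\eta_0$, the intervals $(0,\theta_0),(\theta_0,\theta_1),(\theta_1,\theta_2),\dots$ cover $(0,\eta_0)\setminus\{\theta_k:k\ge1\}$, giving $\mathcal{L}(a)\cap(0,\eta_0]\subseteq\{\theta_k:k\ge0\}\cup\{\eta_0\}$. For the reverse inclusion: the periodic decoding of $\theta_k$ yields a word $w^{[k]}$ with $((w^{[k]}))_a=\theta_k$ (clear from $\sum_{q\le N}w^{[k]}_qa^{-q}=\beta_0-\beta_Na^{-N}$ and $|\beta_N|\le\theta_k$); taking $\xi=\theta_k$, (\ref{Realization}) gives $\varepsilon(\xi a^n)\equiv\beta_n\pmod{\Z}$ with $|\beta_n|\le\theta_k<1/2$, so $\limsup_n\|\xi a^n\|=\max_n|\beta_n|=\theta_k$ and $\theta_k\in\Xi(a)$; and $\eta_0\in\Xi(a)$ is Dubickas's equality $\limsup_n\|\eta_0a^n\|=\eta_0$. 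Hence $\Xi(a)=\{\theta_k:k\ge0\}\cup\{\eta_0\}$.

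The hardest part will be the inductive claim just used --- that the forced, expanding decoding survives forever \emph{only} at the periodic thresholds $\theta_k$ and at the limit $\eta_0$, i.e.\ that no admissible word (periodic or not) has value strictly between two consecutive $\theta_k$'s and that the unique admissible word of value $\eta_0$ is the aperiodic Thue--Morse-type word $c$. Pinning this down needs the fine self-similar structure of $E$ (equivalently of $c$) together with careful interval bookkeeping for the positions of the $\beta_j$, and two cases deserve separate attention: the endpoint $\eta=\eta_0$, where the orbit is aperiodic and one must still check $|\beta_j|\le\eta_0$ for all $j$ (essentially Dubickas's lower bound), and $a=2$, where several of the separating inequalities become nearly tight. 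This is the same Markoff-type rigidity of balanced/Sturmian words alluded to in the introduction.
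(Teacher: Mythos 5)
Your setup is correct and takes a genuinely different route from the paper: the uniqueness of the ``decoding'' (the interval $[a\beta_k-\eta,\,a\beta_k+\eta]$ has length $2\eta<1$, so the next digit is forced), the bound $w_n\in\{-1,0,1\}$, the emptiness of $\LL(a)\cap(0,\theta_0)$, the stalling on $(\theta_0,\theta_1)$, and the recursion $E^{(k)}(X)=\bigl(1+X(1-X)E^{(k-1)}(X^2)\bigr)/(1+X)$ all check out. But there is a genuine gap exactly where the theorem lives: the assertion that the forced decoding survives forever \emph{only} for $\eta\in\{\theta_k: k\ge 0\}\cup\{\eta_0\}$ --- stalling on every gap $(\theta_{k-1},\theta_k)$, purely periodic of period $2^{k+1}$ at $\theta_k$, and admitting the single aperiodic word at $\eta_0$ --- is announced as something you ``would prove by induction'' and is never carried out; you flag it yourself as the hardest part. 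This claim is needed in both directions: for $\Xi(a)\subseteq\{\theta_k\}\cup\{\eta_0\}$ you need the stalling on every gap beyond $k=1$, and for $\theta_k\in\Xi(a)$ with $k\ge 2$ you must actually verify that every shift of $(C_k\overline{C_k})^{\infty}$ has $a$-value at most $\theta_k$ in modulus, which is not automatic. The induction step is also not a routine repetition of your $k=1$ computation: the renormalization $\theta_k(a)=\bigl(1+(a-1)\theta_{k-1}(a^2)\bigr)/(a+1)$ passes to base $a^2$ and rescales digits, so the ``interval bookkeeping'' must be set up uniformly in the base and must track how a deviation from the forced pattern of $\theta_{k-1}$ propagates until it either stalls or forces $\eta\ge\theta_k$.

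For comparison, the paper spends essentially all of its effort on precisely this missing step, but organizes it combinatorially rather than metrically: Dubickas's forbidden factors reduce the limsup word to the form $10^{v_1-1}\overline{1}0^{v_2-1}1\cdots$ with $(v_i)\in\{1,2\}^{\N}$, an alternating lexicographic order on $\{1,2\}^{\N}$ converts the value comparison into a word comparison, and an induction on the sets $\mathcal{F}_k$ using the substitution $\tau(1)=2$, $\tau(2)=211$ (via $\tau^{k-1}(212)>\bw$ and $\tau^{k-1}(2111)>\bw$) shows that the only admissible eventually periodic words are $A_k^{\infty}$ and the only aperiodic one is the fixed point $\bw$. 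Your recursion for $\theta_k$ is the metric shadow of that substitution; to complete the argument along your own lines you must supply an exact analogue of the $\mathcal{F}_k$ induction, including the endpoint $\eta=\eta_0$ (where Dubickas's equality gives membership, but the uniqueness of the admissible word of value $\eta_0$ still requires the same rigidity).
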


\begin{proof}
Let $\eta\in \Xi(a)$. There exists
\(
\xi=\sum_{n=1}^{\infty}s_n a^{-n},
\)
where $s_n=s_n(a;\xi)$, such that $\eta=\limsup_{n\to\infty}\|\xi a^n\|$. Let $\bt:=(t_n)_{n\geq 1}$ be a limsup word of 
$\xi$. 
If necessary, changing $\xi$ and $(t_n)_{n\geq 1}$ by $-\xi$ and $(-t_n)_{n\geq 1}$, we have 
\(
\eta=\sum_{n=1}^{\infty}t_n a^{-n}.
\)
From the proof of Theorem 4 in Dubickas \cite{Dubickas:06_2}, we see that 
$\bt\in\{\overline{1},0,1\}^{\infty}$ and the following factors (subwords) are forbidden: 
\begin{align*}
\begin{cases}
10^k1, \overline{1}0^k\overline{1} & k\geq 0,\\
10^m\overline{1}, \overline{1}0^m 1 & m\geq 2,
\end{cases}
\end{align*}
where $\overline{1}$ denotes $-1$. Note that each digit from $\{1,-1\}$ appears infinitely often in $\bt$. 
Therefore $\bt=(t_n)_{n\geq 1}$ has the form 
\[
\bt=1 0^{v_1-1} \overline{1} 0^{v_2-1} 1 0^{v_3-1} \overline{1} \ldots=:\Phi(\bv),
\]
where $\bv=(v_i)_{i\geq 1}\in \{1,2\}^{\infty}$. 
Let $\mathcal{F}$ be the set of limsup words $\bt$ with initial letter 1 corresponding to a certain $\eta\in \Xi(a)$.
For any $\by=(y_i)_{i\geq 1}\in \{1,2\}^{\infty}$, let 
\[
f(\by;X)=f(y_1y_2\ldots;X):=1-X^{y_1}+X^{y_1+y_2}-+\cdots=\sum_{i=0}^{\infty}(-1)^i X^{y_1+\cdots+y_i}.
\]
Set 
\[
\mathcal{W}:=\left\{\by\in\{1,2\}^{\infty} \ \left| \ 
\limsup_{n\to\infty}\textcolor{red}{\left\|\frac1a f\left(\by;\frac1a\right)\cdot a^n\right\|}\in \Xi(a)\right.\right\}.
\]
Let $\by=y_1y_2\ldots, \by'=y_1'y_2'\ldots$ be finite or infinite words on the alphabet $\{1,2\}$. We denote $\by>\by'$ 
if there exists $n\geq 1$ satisfying $y_n\ne y_n'$ and 
\[\textcolor{red}{(-1)^{h+1}(y_h-y_h')>0},\]
where $h=\min \{n\geq 1 \mid y_n\ne y_n'\}$. We denote by $\by\geq \by'$ if $\by>\by'$ or $\by=\by'$. 
Assume that $\by,\by'\in\{1,2\}^{\infty}$ satisfy $\by\geq \by'$. Then 
\(f(\by;a^{-1})\geq f(\by';a^{-1}),\)
where the equality holds only if $\by=\by'$. 
Hence, we obtain 
\[
\mathcal{F}=\left\{\left.
\Phi\left(\limsup_{N\to\infty} \sigma^N(\by)\right)
\ \right| \ \by\in \mathcal{W}
\right\},
\]
where $\limsup_{N\to\infty} \sigma^N(\by)$ is determined by the order $\geq$ defined above.

Let $\tau$ be the substitution on $\{1,2\}^{*}$ defined by $\tau(1)=2$ and $\tau(2)=211$. 
Let $A_n$ ($n=0,1,\ldots$) be the finite words defined by 
$A_0=1$ and $A_n=\tau(A_{n-1})$ for $n\geq 1$. 
Then 
\[\bw=w_1w_2\ldots:=\lim_{n\to\infty}A_n\]
is a fixed point of $\tau$. 
Recall that 
\[\frac{1}{a}{{E}}\left(\frac1a\right)=\frac1a f\left(\bw;\frac1a\right)\]
(see relation (9) in \cite{Dubickas:06_2}).
Hence, $\mathcal{W}$ is the set of $\by=(y_i)_{i\geq 1}\in \{1,2\}^{\infty}$ satisfying the following: 
For any $u_1\ldots u_l\in \{1,2\}^{+}$ with $u_1\ldots u_l>\bw$, the word $u_1\ldots u_l$ appears 
at most finitely many times in $\by$.

Let us determine all eventually periodic words in $\mathcal{F}$. 
Let $\mathcal{F}_k$ ($k=2,3,\ldots$) be the subsets of $\mathcal{F}$ defined by 
$\mathcal{F}_2:=\mathcal{F}\backslash\{A_0^{\infty},A_1^{\infty}\}$, 
and $\mathcal{F}_{k+1}:=\mathcal{F}_k\backslash\{A_k^{\infty}\}$ for $k\geq 2$, where 
$A_k^{\infty}=A_kA_kA_k\ldots\in\{1,2\}^{\infty}$. 
Let $\bv=\limsup_{N\to\infty}\sigma^N (\by)\in\mathcal{F}$ with $\by\in\mathcal{W}$.
Refining the proof of Lemma 4 in \cite{Dubickas:06_2}, 
we show the following claim by induction: Let $k\geq 2$. Then, if $\bv\in \mathcal{F}_k$, then the word $A_k$ 
is a prefix of $\bv$, and $\bv$ is a concatenation of $A_{k-1}$, $A_k$. 
Moreover, $\by$ can be taken as a word which is a concatenation of $A_{k-1}$, $A_k$. 
In particular, the claim above implies that 
$\cap_{k=2}^{\infty}\mathcal{F}_k=\{\bw\}$, i.e., $\bw$ is a unique non-eventually periodic word in $\mathcal{F}$.

If $y_n=1$ holds for sufficiently large $n$, then $\bv=1^{\infty}=A_0^{\infty}$. 
Similarly, if $y_n=2$ for sufficiently large $n$, then $\bv=2^{\infty}=A_0^{\infty}$. 
We now prove the claim in the case of $k=2$. 
Let $\bv\in \mathcal{F}_2$. Then each digit from $\{1,2\}$ appears infinitely often in $\by$. 
Observe that $212>\bw$ and $2111>\bw$. 
Thus, we may assume that neither $212$ nor $2111$ does not appear in $\by$. 
Hence, $\by$ is a concatenation of $A_1=2$ and $A_2=211$. 
Consequently, $\bv$ is also a concatenation of $A_1$ and $A_2$ because 
the initial letter of $\limsup_{N\to\infty}\sigma^N(\by)$ is 2. 
Since $211>22$, we see that $A_2$ is a prefix of $\bv$. 

Suppose that the claim is true for $k=\ell\geq 2$. Let $\bv\in \mathcal{F}_{k+1}$. 
By the induction hypothesis, we may assume that $\by$ has the form 
$\by=\tau^{k-1}(z_1z_2\ldots)$ with $\bz=(z_i)_{i\geq 1}\in \{1,2\}^{\infty}$
Since $\bv\ne A_{k-1}^{\infty},A_k^{\infty}$, each digit from $\{1,2\}$ appears infinitely often in $\bz$. 
Recall that $\tau^{k-1}(212)>\bw$ and $\tau^{k-1}(2111)>\bw$. 
Thus, we may assume that neither $212$ nor $2111$ does not appear in $\bz$. 
Hence, $\bz$ is a concatenation of $2=\tau(1)$ and $211=\tau(2)$. 
Consequently, $\by$ and $\bv$ are concatenations of $\tau^{k-1}(2)=\tau^{k}(1)=A_{k}$ and 
$\tau^{k-1}(211)=\tau^{k}(2)=A_{k+1}$. 
Note by $\bv\in \mathcal{F}_{k+1}\subset\mathcal{F}_k$ that  $A_k=\tau^{k}(1)$ is a prefix of $\bv$. 
Observe $\bv=\limsup_{N\to\infty}\sigma^N(\tau^{k}(\bz'))$ with certain $\bz'\in \{1,2\}^{\infty}$ and that $2$ appears 
infinitely often in $\bz'$ since $\bv\ne \tau^{k}(1)^{\infty}$. 
We see that either $\tau^{k}(2)$ or $\tau^{k}(11)$ is a prefix of $\bv$. 
Since $\tau^{k}(2)>\tau^{k}(11)$, we get that $\tau^{k}(2)=A_{k+1}$ is a prefix of $\bv$. Therefore, we proved the claim.

Hence, we obtain 
\[
\mathcal{F}=\{A_k^{\infty}\mid k=0,1,2,\ldots\}\cup \{\bw\}
\]
and 
\[\Xi(a)=\left\{\left.
\frac1a f\left(A_k^{\infty};\frac1a\right)
\ \right| \ k=0,1,2,\ldots\right\}\bigcup
\left\{ \frac{1}{a}{{E}}\left(\frac{1}{a}\right) \right\}.\]
In what follows, we prove for any $k\geq 0$ that 
\begin{align}\label{eqn:disc_integer1}
f\left(A_k^{\infty};\frac1a\right)={{E}}^{(k)}\left(\frac{1}{a}\right),
\end{align}
which implies Theorem \ref{disc_integer}. For any $k\geq 0$, let 
$A_k=:a_1 a_2 \ldots a_{\ell(k)},$
where $a_i\in\{1,2\}$ for any $i$ and $\ell(k)$ is the length of $A_k$. Set  
$s_j:= \sum_{i=1}^j a_i $
for $1\leq j\leq \ell(k)$. 
It is easily seen by induction that $\ell(k)$ is an odd number and $s_{\ell(k)}=2^k$. 
Observe that 
\begin{align}
f(A_k^{\infty};X)
&=
1+\sum_{m=0}^{\infty}(-X^{s_{\ell(k)}})^m\cdot 
(-X^{s_1}+X^{s_2}-X^{s_3}+-\cdots-X^{s_{\ell(k)}})\nonumber\\
&=:\frac{1}{1+X^{2^k}}\cdot P_k(X)\label{eqn:disc_integer2},
\end{align}
where $P_k(X)=1-X^{s_1}+X^{s_2}-X^{s_3}+-\cdots-X^{s_{-1+\ell(k)}}$. 
From $2^{k+1}\geq 1+2^k$, we get
\begin{align*}
&1-(1-X)\prod_{m=0}^{\infty} (1-X^{2^m})\\
&=1+X^{2^k}-(1-X)\prod_{m=0}^{k-1} (1-X^{2^m})+O\left(X^{1+2^k}\right),
\end{align*}
where $O(X^n)$ denotes the terms of degree not less than $n$. 
Moreover, using 
\[
P_k(X)={{E}}(X)+O\left(X^{2^k}\right),
\]
we see 
\begin{align*}
P_k(X)&=
\frac{1-(1-X)\prod_{m=0}^{\infty} (1-X^{2^m})}{2X}+O\left(X^{2^k}\right)
\\
&=
\frac{1+X^{2^k}-(1-X)\prod_{m=0}^{k-1} (1-X^{2^m})}{2X}+O\left(X^{2^k}\right).
\end{align*}
Since the degree of $P_k(X)$ is less than $2^k$, we obtain 
\[P_k(X)=
\frac{1+X^{2^k}-(1-X)\prod_{m=0}^{k-1} (1-X^{2^m})}{2X}.\]
Substituting the equality above into (\ref{eqn:disc_integer2}), we deduce (\ref{eqn:disc_integer1}). 
\end{proof}

\section{Discrete part of $\LL(\alpha)$: quadratic unit case}
\label{QU}

Let $\alpha>1$ be a quadratic unit, i.e., a root of $x^2-bx\pm 1$ with $b\in \N$
and $\alpha_2=\pm 1/\alpha$ be its conjugate. 
We have shown that $\LL(\alpha)$ 
is a closed subset of $[0,1/2]$
in Theorem \ref{Closed} and that $\LL(\alpha)\cap [0,(\alpha-1)/(4(b+2))]$
has Lebesgue measure zero in Theorem \ref{Hdim}.
In this section, we give the minimum limit point and all isolated points beneath this. 
First, we consider the case $\alpha_2>0$. 
\begin{theorem}
\label{thm:main}
Let $\alpha>3$ be a quadratic unit with $\alpha_2>0$. Define sequences of integers 
$(p_n)$  and $(q_n)$ by
$$
p_0=0,\ p_2=1, \ p_{2n+2}=b p_{2n}-p_{2n-2},\  p_{2n-1}=p_{2n-2}+p_{2n}
$$
and
$$
q_0=1,\ q_2=1+b, \ q_{2n+2}=b q_{2n}-q_{2n-2},\  q_{2n-1}=q_{2n-2}+q_{2n}
$$
for $n\in \N$.
Then 
$(p_n/q_{n})_{n=0,1,\dots} $ is a strictly increasing sequence converging to $1/(1+\alpha)$ 
and we have
$$
\left\{\left. \limsup_{n\to \infty} \| \xi \alpha^n \| \ \right|\ \xi\in \R \right\} \cap 
\left[0,\frac 1{1+\alpha}\right] 
= \left\{\left. \frac {p_n}{q_{n}}\ \right| n=0,1,2,\dots \right\} \bigcup \left\{\frac 1{1+\alpha}\right\}.
$$
Moreover, $X_n:=\{\xi\in \R\ \left|\ \limsup_{n\to \infty} \|\xi \alpha^n\|=p_n/q_{n} \right.\}$ 
is a subset of $\Q(\alpha)$, explicitly described in terms of 
Christoffel words, 
and
$$
X_{\infty}:=\left\{\xi\in \R\ \left|\ \limsup_{n\to \infty} \|\xi \alpha^n\|=\frac {1}{1+\alpha}\right.\right\}$$
is an uncountable set, described by eventually balanced words, a generalization of Sturmian words. 
\end{theorem}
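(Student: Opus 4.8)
The plan is to transfer the statement entirely to symbolic dynamics, exactly as the paper sets up in \S\ref{SD} via the realization map $g$ and limsup words, and then to run a Markoff-style optimization over a suitable shift space. Using formula (\ref{Eqn:3}), for a quadratic unit $\alpha$ with $\alpha_2>0$ we have $((s_n))_\alpha = \frac{1}{\alpha-\alpha_2}\big(\dots + s_{n-1}\alpha_2 + s_n + s_{n+1}\alpha^{-1}+\dots\big)$, so bounding $\limsup_{n\to\infty}\|\xi\alpha^n\|$ by $1/(1+\alpha)$ forces, for a limsup word $(w_n)_{n\in\Z}$, the uniform bound $|s_n|\le (2+b)\cdot\frac{1}{1+\alpha}$; since $\frac{1}{1+\alpha}<1/4$ and $\alpha>3$, Theorem \ref{Hdim} already tells us this region is Lebesgue-null and the digits lie in a tiny alphabet. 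First I would pin down exactly which alphabet: I expect one shows $w_n\in\{\bar1,0,1\}$ (again $\bar1=-1$) and that the two-sided word $(w_n)$ must satisfy the forbidden-factor conditions analogous to the integer case in Theorem \ref{disc_integer}, but now \emph{bi-infinite} and symmetric under the mirror map because the dynamics is two-sided. This is precisely where the space $\X_F$ of Markoff — forbidden factors $0w01\tilde w1$ and $1w10\tilde w0$ — enters, and by Theorem \ref{Balanced} the admissible two-sided words are exactly the bi-infinite balanced words.

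Next I would carry out the optimization on $\X_F$. For a bi-infinite balanced word $(w_n)$, the condition $|\sigma^k((w_n))_\alpha|\le\eta$ for all $k$ together with $|((w_n))_\alpha|=\eta$ (the defining property (\ref{Limsup}) of a limsup word) says that the maximum over all shifts of the two-sided series is attained and equals $\eta$. I would express this maximal value through the recoding: writing a balanced word near the "all-zero with isolated $\pm1$" regime via run-lengths, one gets $\eta$ as a value of a continued-fraction-like functional in $\alpha$, and minimizing/parametrizing this functional over balanced words gives a decreasing family of candidate words — the periodic ones being codings of circle rotations with rational slope, i.e. Christoffel words. The key computation is that the Christoffel word $C_n$ of the appropriate slope yields $\limsup_{n\to\infty}\|\xi\alpha^n\| = p_n/q_n$ with the stated recursions: here I would verify by induction that $p_n,q_n$ defined by $p_{2n+2}=bp_{2n}-p_{2n-2}$, $p_{2n-1}=p_{2n-2}+p_{2n}$ (and similarly $q_n$) are exactly the numerators/denominators produced by the negative continued fraction expansion of $1/(1+\alpha)$ truncated at level $n$, so that $p_n/q_n$ is strictly increasing with limit $1/(1+\alpha)$ — this is a routine but slightly delicate manipulation using $\alpha^2=b\alpha\mp1$. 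The Sturmian (non-eventually-periodic balanced) words, of which there are uncountably many, all give the limiting value $1/(1+\alpha)$ itself, and conversely any $\xi$ with $\limsup\|\xi\alpha^n\|=1/(1+\alpha)$ has a limsup word that is an eventually balanced word; this yields the description of $X_\infty$ as an uncountable set.

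The remaining points are: (a) the converse direction, that \emph{each} value $p_n/q_n$ is actually attained — here I would exhibit $\xi_n=g((x_m))$ for $(x_m)$ the periodic extension of the relevant Christoffel word (so $\xi_n\in\Q(\alpha)$, giving $X_n\subset\Q(\alpha)$) and check directly from (\ref{Eqn:3}) that the limsup equals $p_n/q_n$ and nothing in $[0,1/(1+\alpha)]$ is skipped; (b) closedness of the truncated spectrum, which is free from Theorem \ref{Closed}; and (c) the exact determination of $X_n$ in terms of Christoffel words, which amounts to showing that the only limsup words achieving value $p_n/q_n$ are shifts of the periodic Christoffel word $C_n^\infty$, an argument of the same shape as the induction in the proof of Theorem \ref{disc_integer} but run on $\X_F$ with the mirror symmetry. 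The main obstacle I anticipate is step (c) combined with the "unavoidable symmetry" claim: proving that a bi-infinite balanced word optimizing the two-sided functional at a discrete level must be eventually periodic and in fact a Christoffel coding requires the Markoff-type symmetry of Sturmian words (the appearance of palindromic central factors forcing the forbidden configurations $0w01\tilde w1$), and making this rigorous in the \emph{two-sided}, $\alpha$-weighted setting — rather than the classical continued-fraction setting of \cite{Bombieri:07,Reutenauer:09} — is the technical heart of the argument; everything else is bookkeeping with the recursions and geometric series.
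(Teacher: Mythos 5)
Your proposal is correct in outline and follows essentially the same route as the paper: reduce to a limsup word over $\{\overline{1},0,1\}$, derive forbidden factors that force the word into $(0^*+(1\overline{1})^*)^{\Z}$ and hence (via the substitution $\gamma(1)=1\overline{1}$, $\gamma(0)=0$) into the balanced shift $\X_F$, then use purely periodic balanced words (Christoffel words) together with the negative continued fraction recursion to get the discrete values $p_n/q_n$, and eventually balanced words for $X_\infty$. The one device you describe only loosely is this explicit $\gamma$-recoding and the quantity $\iota(\x)$ (Proposition \ref{Iota}, Lemma \ref{Maxiota}), which is how the paper makes rigorous the step you correctly flag as the technical heart — that $\eta<1/(1+\alpha)$ forces pure periodicity and that the maximizing shift of $\nu^{\Z}$ sits at the Christoffel palindrome.
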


For the definition of Christoffel words and eventually balanced words, see \S \ref{LBW}. 
Consider the {\bf negative continued fraction} expansion 
$$\frac 1{1+\alpha}=\cfrac{1}{1+b-\cfrac {1}{b- \cfrac{1}{b- \cfrac {1}{\ddots} }}}
=: [0;1+b,b,b,b,\dots]_{neg}.$$
Using its $n$-th convergent $P_n/Q_n=[0,\underbrace{1+b,b,b,\dots,b}_{n}]_{neg}$,  we have 
$$\frac{p_{2n}}{q_{2n}}=\frac{P_{n}}{Q_{n}}\quad \text{ and }\quad \frac{p_{2n-1}}{q_{2n-1}}=\frac{P_{n-1}+P_{n}}{Q_{n-1}+Q_{n}}.$$

The complete description of $X_n$ will be given in the proof, 
but we introduce the first two of them:

\begin{cor}
\label{X2}
We have
$$
X_0=\frac {1}{\alpha-\alpha_2} \Z[\alpha],\qquad
X_1=\pm \frac {\alpha-1}{b^2-4} + X_0.$$ 
Therefore, we have
$$
\limsup_{n\to\infty}\|\xi\alpha^n\|\ge \frac 1{2+b}
$$
for $\xi\in \R\setminus X_0$, 
and 
$$
\limsup_{n\to\infty}\|\xi\alpha^n\|\ge \frac 1{1+b}
$$
for $\xi\in \R\setminus (X_0\cup X_1)$.
\end{cor}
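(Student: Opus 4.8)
The plan is to deduce Corollary~\ref{X2} from the explicit description of the level sets $X_n$ obtained in the proof of Theorem~\ref{thm:main} (\S\ref{Proofs}), together with one structural remark: \emph{each $X_n$, and also $X_\infty$, is symmetric under $\xi\mapsto-\xi$ and is a union of cosets of the additive group $\frac{1}{\alpha-\alpha_2}\Z[\alpha]$}. Symmetry is immediate since $\|(-\xi)\alpha^n\|=\|\xi\alpha^n\|$. For the coset property I would note that for $w=c_0+c_1\alpha\in\Z[\alpha]$ with Galois conjugate $w_2=c_0+c_1\alpha_2$ one has $(w-w_2)/(\alpha-\alpha_2)=c_1\in\Z$; hence $\delta=w/(\alpha-\alpha_2)$ satisfies $\delta\alpha^n\equiv w_2\alpha_2^{\,n}/(\alpha-\alpha_2)\pmod{\Z}$, so $\|\delta\alpha^n\|\to0$ because $|\alpha_2|<1$, and therefore $\limsup_n\|(\xi+\delta)\alpha^n\|=\limsup_n\|\xi\alpha^n\|$ for every $\xi$ and every $\delta\in\frac{1}{\alpha-\alpha_2}\Z[\alpha]$; taking $\xi=0$ also gives $\frac{1}{\alpha-\alpha_2}\Z[\alpha]\subseteq X_0$. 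It then remains to (i) prove the reverse inclusion $X_0\subseteq\frac{1}{\alpha-\alpha_2}\Z[\alpha]$, (ii) identify the cosets forming $X_1$, and (iii) read off the two lower bounds.

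For (i), since $p_0/q_0=0$ the condition $\xi\in X_0$ reads $\lim_n\|\xi\alpha^n\|=0$, so $\varepsilon(\xi\alpha^n)\to0$ and by (\ref{Eqn:2}) the integers $s_n=s_n(\alpha;\xi)$ tend to $0$, hence vanish for all large $n$; combined with (\ref{zerocond}) the bi-infinite sequence $(s_n)$ has finite support. Since $\alpha^{-1}=b-\alpha\in\Z[\alpha]$, formula (\ref{Eqn:3}) (equivalently (\ref{new1}) at $n=0$) expresses $\varepsilon(\xi)=((s_n))_\alpha$ as $\frac{1}{\alpha-\alpha_2}$ times a finite $\Z$-combination of powers of $\alpha$, so $\varepsilon(\xi)\in\frac{1}{\alpha-\alpha_2}\Z[\alpha]$; as $\xi=u(\xi)+\varepsilon(\xi)$ with $u(\xi)\in\Z\subseteq\frac{1}{\alpha-\alpha_2}\Z[\alpha]$ (because $\alpha-\alpha_2=2\alpha-b\in\Z[\alpha]$), we get $\xi\in\frac{1}{\alpha-\alpha_2}\Z[\alpha]$. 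Thus $X_0=\frac{1}{\alpha-\alpha_2}\Z[\alpha]$.

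For (ii), the proof of Theorem~\ref{thm:main} shows that $p_1/q_1=1/(2+b)$ is realized precisely by the shortest nontrivial Christoffel word, in the form that $\xi\in X_1$ if and only if $s_n(\alpha;\xi)=\pm(-1)^n$ for all sufficiently large $n$ (the limsup word being the $2$-periodic word on $1,-1$ up to a shift). Given such a $\xi$, I would split $\varepsilon(\xi)=\sum_q s_q(\alpha;\xi)\tau_q$ into its eventually periodic tail and its finitely supported head and, using $\tau_q\in\frac{1}{\alpha-\alpha_2}\Z[\alpha]$ for every $q$ (since $\alpha_2=b-\alpha\in\Z[\alpha]$), reduce $\xi$ modulo $X_0$ to the representative coming from the cleanest such sequence, $s_n=0$ for $n\le0$ and $s_n=(-1)^n$ for $n\ge0$:
\[
\xi=\sum_{q\ge0}(-1)^q\tau_q=\frac{1}{\alpha-\alpha_2}\left(1+\sum_{q\ge1}(-1)^q\alpha^{-q}\right)=\frac{1}{\alpha-\alpha_2}\cdot\frac{\alpha}{\alpha+1}=\frac{\alpha-1}{(\alpha-\alpha_2)^2}=\frac{\alpha-1}{b^2-4},
\]
where I used $\alpha\alpha_2=1$ (so $\alpha(\alpha-\alpha_2)=\alpha^2-1$) and $(\alpha-\alpha_2)^2=b^2-4$. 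The opposite global sign gives $-\frac{\alpha-1}{b^2-4}$, and by symmetry both representatives lie in $X_1$, so $X_1=\pm\frac{\alpha-1}{b^2-4}+X_0$. These are two distinct cosets, since $\frac{2(\alpha-1)}{b^2-4}=\frac{2(\alpha-1)}{(\alpha-\alpha_2)^2}\notin\frac{1}{\alpha-\alpha_2}\Z[\alpha]$: indeed $\frac{2(\alpha-1)}{\alpha-\alpha_2}=\frac{2(\alpha-1)}{2\alpha-b}$ has norm $\frac{4(2-b)}{4-b^2}=\frac{4}{b+2}$, which is not an integer for $b\ge4$, so this element is not in $\Z[\alpha]$.

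For (iii), Theorem~\ref{thm:main} gives $\LL(\alpha)\cap[0,\tfrac{1}{1+\alpha}]=\{p_n/q_n\mid n\ge0\}\cup\{\tfrac{1}{1+\alpha}\}$ with $0=p_0/q_0<p_1/q_1=\tfrac{1}{2+b}<p_2/q_2=\tfrac{1}{1+b}<\cdots$, and $\tfrac{1}{1+\alpha}>\tfrac{1}{1+b}$ because $\alpha<\alpha+\alpha_2=b$. Hence $\LL(\alpha)$ meets neither $(0,\tfrac{1}{2+b})$ nor $(\tfrac{1}{2+b},\tfrac{1}{1+b})$; so for $\xi\notin X_0$ the value $\limsup_n\|\xi\alpha^n\|\in\LL(\alpha)$ is nonzero, hence at least $\tfrac{1}{2+b}$, and for $\xi\notin X_0\cup X_1$ it is moreover not equal to $\tfrac{1}{2+b}$, hence at least $\tfrac{1}{1+b}$, which are the two displayed inequalities. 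The only genuinely computational point is (ii): locating the Christoffel word attached to $p_1/q_1$ (which is supplied by the proof of Theorem~\ref{thm:main}), summing the resulting geometric series in $\Q(\alpha)$, and checking by the norm computation that the $\pm$ cosets differ; steps (i) and (iii) are routine manipulations with (\ref{Eqn:2}), (\ref{Eqn:3}), (\ref{zerocond}) and the Pisot property.
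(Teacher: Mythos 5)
Your proposal is correct and follows essentially the same route as the paper, which derives Corollary~\ref{X2} by reading $X_0$ and $X_1$ off the classification of limsup words in the proof of Theorem~\ref{thm:main} (the tail $0^{\N}$ giving $X_0=\frac{1}{\alpha-\alpha_2}\Z[\alpha]$, the tails $(1\overline{1})^{\N}$, $(\overline{1}1)^{\N}$ giving $X_1$, and the lower bounds coming from the gaps $(0,p_1/q_1)$ and $(p_1/q_1,p_2/q_2)$ in the spectrum). You merely fill in algebra and details the paper leaves implicit --- the identity $\frac{1}{\alpha-\alpha_2}\cdot\frac{1-\alpha^{-1}}{1-\alpha^{-2}}=\frac{\alpha-1}{b^2-4}$ via $\alpha\alpha_2=1$, the translation invariance of $\limsup_n\|\xi\alpha^n\|$ under $\frac{1}{\alpha-\alpha_2}\Z[\alpha]$, and the norm computation showing the two cosets of $X_1$ are distinct --- all of which check out.
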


Next, we consider the case $\alpha_2<0$. 

\begin{theorem}
Let $\alpha>3$ be a quadratic unit with $\alpha_2<0$.
Define sequences of integers 
$(p_n)$ and $(q_n)$ by
$$
p_0=0,\ p_2=b,\ p_{2n+2}=(b^2+2) p_{2n}-p_{2n-2},\ 
p_{2n-1}=p_{2n-2}+p_{2n}
$$
and
$$
q_0=1,\ q_2=b^2+3,\ q_{2n+2}=(b^2+2) q_{2n}-q_{2n-2},\ 
q_{2n-1}=q_{2n-2}+q_{2n}
$$
for $n\in \N$. Then 
$(p_n/q_{n})_{n=0,1,\dots}$ is a strictly increasing sequence converging to $b/(1+\alpha^2)$ 
and we have
$$
\left\{\left. \limsup_{n\to \infty} \| \xi \alpha^n \| \ \right|\ \xi\in \R \right\} \cap 
\left[0,\frac {b}{1+\alpha^2}\right] 
= \left\{\left. \frac {p_n}{q_{n}}\ \right| n=0,1,2,\dots \right\} \bigcup \left\{\frac {b}{1+\alpha^2}\right\}.
$$
Moreover, $X_n:=\{\xi\in \R\ \left|\ \limsup_{n\to \infty} \|\xi \alpha^n\|=p_n/q_{n}\right.\}$
is a subset of $\Q(\alpha)$, explicitly described in terms of 
Christoffel words, 
and
$$
X_{\infty}:=\left\{\xi\in \R\ \left|\ \limsup_{n\to \infty} \|\xi \alpha^n\|=\frac {b}{1+\alpha^2}\right.\right\}$$
is an uncountable set, described by eventually balanced words.

\label{thm:main2}
\end{theorem}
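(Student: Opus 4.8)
The plan is to establish Theorem~\ref{thm:main2} in complete parallel with Theorem~\ref{thm:main}, exploiting the fact that a quadratic unit $\alpha$ with $\alpha_2<0$ satisfies $\alpha^2 - (b^2+2)\alpha^2 + 1 = 0$ for $\tilde\alpha := \alpha^2$ — more precisely, $\alpha$ is a root of $x^2 - bx - 1$ and $\alpha^2$ is a quadratic unit with positive conjugate $\alpha_2^2$ satisfying $y^2 - (b^2+2)y + 1 = 0$. So the first step is to record the companion formula \eqref{Eqn:3} in this concrete case: using $\alpha_2 = -1/\alpha$, the realization map reads
\[
((s_n))_\alpha=\frac{1}{\alpha-\alpha_2}\Bigl(\cdots+s_{n-2}\alpha_2^2+s_{n-1}\alpha_2+s_n+\frac{s_{n+1}}{\alpha}+\cdots\Bigr),
\]
with $|s_n|\le (2+b)/2$, and to observe that the alternating signs of the $\alpha_2^{j}$ are exactly what will force the combinatorial constraints on a limsup word. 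As in \S\ref{Discrete_Int} and (forthcoming) \S\ref{LBW}, I would pass to a limsup word $(w_n)\in\B^{\Z}$ for a given $\xi$ with $\eta=\limsup_n\|\xi\alpha^n\|$ small, so that $|(\sigma^k((w_n)))_\alpha|\le\eta$ for all $k\in\Z$ by \eqref{Limsup}.

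The core step is to prove that if $\eta \le b/(1+\alpha^2)$ then the bi-infinite word $(w_n)$, after the appropriate normalization of sign and alphabet reduction, lies in the Markoff shift space $\X_F$ (Theorem~\ref{Balanced}), i.e.\ is a bi-infinite balanced word, or is eventually balanced; and conversely that balanced words realize exactly the claimed values. Concretely: the smallness of $\eta$ together with the explicit series shows that $w_n\in\{\bar1,0,1\}$ and that certain short patterns are forbidden, precisely the patterns $0w0 1\tilde w 1$, $1w10\tilde w0$ after the substitution that turns the $\{0,1\}$-balanced word into the relevant $\{\bar1,0,1\}$ coding — this is where one invokes the Markoff-type forbidden-word characterization quoted in the introduction. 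The discrete values then correspond to \emph{periodic} points of $\X_F$, which are the Christoffel words (codings of rotations by rational slope), and for each rational slope one computes the associated value of the series in closed form. The recursions $p_{2n+2}=(b^2+2)p_{2n}-p_{2n-2}$, $p_{2n-1}=p_{2n-2}+p_{2n}$ (and likewise for $q_n$) are exactly the Stern--Brocot / negative-continued-fraction recursions for the slopes $P_n/Q_n=[0;\ast,b^2+2,\dots,b^2+2]_{neg}$ interleaved with their mediants, so identifying $p_n/q_n$ with the value of the series on the corresponding Christoffel word is a finite computation using the geometric-series evaluation of the two-sided sum above. The limit slope $1/(1+\alpha^2)$ — note $b/(1+\alpha^2)$ after clearing the factor coming from $1/(\alpha-\alpha_2)$ and the relation $\alpha-\alpha_2 = \alpha+1/\alpha$, hence $b/(1+\alpha^2) = 1/(\alpha-\alpha_2)\cdot b/\alpha$ — is the slope at which the Christoffel words converge to the Sturmian words, giving $X_\infty$ as the eventually balanced words, an uncountable set.

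For convergence and monotonicity of $(p_n/q_n)$ I would argue directly from the recursions: the even-indexed subsequence $P_n/Q_n$ is the sequence of convergents of $[0;b^2+3,b^2+2,b^2+2,\dots]_{neg}$, which converges monotonically to the fixed point $b/(1+\alpha^2)$ of the associated Möbius map $x\mapsto 1/((b^2+2)-x)$ suitably conjugated, and the odd-indexed terms are the mediants $(P_{n-1}+P_n)/(Q_{n-1}+Q_n)$, which interlace; standard Stern--Brocot estimates give strict increase and the common limit. Finally, $X_n\subset\Q(\alpha)$ and the explicit description via Christoffel words follows because a periodic point of $\X_F$ pulls back under $g$ to an element of $\frac{1}{\alpha-\alpha_2}\Z[\alpha]$-coset determined by the word, and Corollary~\ref{X2}-type bookkeeping (transcribed to the $\alpha_2<0$ normalization) pins down $X_0$ and $X_1$.

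The main obstacle I anticipate is the combinatorial heart: showing that small $\limsup$ \emph{forces} the limsup word into $\X_F$, i.e.\ deriving the complete list of forbidden factors from the inequality $|(\sigma^k((w_n)))_\alpha|\le\eta$ and then recognizing that list as the Markoff one. In the $\alpha_2<0$ case the two-sided nature of the series is genuinely two-sided (both tails are infinite and the left tail has alternating signs $\alpha_2^j=(-1)^j\alpha^{-j}$), so unlike the integer case of Theorem~\ref{disc_integer} one cannot simply read off a one-sided greedy-type constraint; one must estimate both directions simultaneously, and the cancellation in the alternating left tail must be controlled carefully to get sharp forbidden patterns rather than merely sufficient ones. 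I expect this to reuse, essentially verbatim modulo the constant $b^2+2$ replacing $b$, the argument given for Theorem~\ref{thm:main}, so the real content is in \S\ref{LBW}--\S\ref{Proofs}; here I would mainly verify that every inequality used there survives the sign change $\alpha_2\mapsto-\alpha_2$ and that the numerology $(b\mapsto b^2+2$, initial data $p_2=b$, $q_2=b^2+3)$ is the correct output of the same machine.
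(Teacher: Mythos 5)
Your overall strategy --- pass to $\alpha^2$, which is a quadratic unit with positive conjugate $\alpha_2^2$ and trace $b^2+2$, and then rerun the machinery of Theorem \ref{thm:main} --- is exactly the paper's, and your identification of the numerology (the even-indexed $p_{2n}/q_{2n}$ as convergents of $[0;b^2+3,b^2+2,b^2+2,\dots]_{neg}$ up to the factor $b$, the odd-indexed terms as mediants, and the limit $b/(1+\alpha^2)$) is correct. You have also correctly located where the difficulty sits, namely in the two-sided forbidden-word analysis with alternating signs on the left tail.

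The concrete gap is in how you propose to bridge from the base-$\alpha$ problem to the base-$\alpha^2$ problem. The reduction is not available at the outset: the limsup word $(w_n)$ is a bi-infinite word whose left tail is weighted by $\alpha_2^{j}=(-1)^j\alpha^{-j}$, and one cannot meaningfully ``substitute $\alpha$ by $\alpha^2$'' until one knows that $(w_n)$ is supported on alternate indices. The paper's proof spends its first half establishing precisely this structural fact: starting from the reference expansion $b/(\alpha^2+1)=(1.0\overline{1})_{\alpha}$ and the order $\gg$ of Lemma \ref{lex}, it derives five families of forbidden words (among them $\overline{1}10$, $11$, $101$, $1001$, $10^k1$, $\overline{1}0^{2l}1$, and the inductive family $0(010\overline{1})^k0100$ of Lemma \ref{lem:2-1}) and concludes that $(w_n)$ lies in $((00)^*+(010\overline{1})^*)^{\Z}$ or $((00)^*+(0\overline{1}01)^*)^{\Z}$, i.e.\ $w_{2n-1}=0$ for all $n\in\Z$ (or $w_{2n}=0$ for all $n\in\Z$). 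Only after this decoupling does the problem become one about $\limsup_{n\to\infty}\|\xi\alpha^{2n}\|$, and only then does the Theorem \ref{thm:main} argument apply with $\alpha^2$ in place of $\alpha$. This preliminary analysis is structurally parallel to, but not a substitution-instance of, the one in Theorem \ref{thm:main}: the reference word is $1.0\overline{1}$ rather than $1.\overline{1}$, the building blocks are $00$ and $010\overline{1}$ rather than $0$ and $1\overline{1}$, and several exclusions (e.g.\ the words of $\mathcal{S}_4$ and the even/odd dichotomy in $\mathcal{S}_5$) require new inductions exploiting the sign alternation. So your claim that the earlier argument is reused ``essentially verbatim modulo the constant $b^2+2$ replacing $b$'' is accurate for the second half of the proof but not for the first, and the first half is where the new work of this theorem actually lies; as written, your plan omits the key intermediate statement that makes the reduction legitimate.
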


Let $P_n/Q_n$ be the $n$-th convergent of the negative
continued fraction expansion 
$$
\frac 1{1+\alpha^2}
=[0;b^2+3,b^2+2,b^2+2,b^2+2,\dots]_{neg}.
$$ 
Then we have $$p_{2n}=bP_{n},\quad q_{2n}=Q_{n},\quad 
p_{2n-1}=p_{2n-2}+p_{2n}, \text{ and } q_{2n-1}=q_{2n-2}+q_{2n}.$$
Note that $p_n$ and $q_{n}$ are not always coprime.

\begin{cor}
We have
$$X_0=\frac {1}{\alpha-\alpha_2} \Z[\alpha],\qquad
X_1=\pm \frac {\alpha}{4+b^2} + X_0.$$
Therefore we have
$$ 
\limsup_{n\to\infty}\|\xi\alpha^n\|\ge \frac {b}{4+b^2}
$$
for $\xi\in \R\setminus X_0$,
and 
\[
\limsup_{n\to\infty}\|\xi\alpha^n\|\ge \frac {b}{3+b^2}
\]
for $\xi\in \R\setminus (X_0\cup X_1)$.
\end{cor}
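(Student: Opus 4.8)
The plan is to run the argument in parallel with the proof of Theorem \ref{thm:main}, replacing the minimal polynomial $x^2-bx+1$ by $x^2-bx-1$, so that $\alpha_2=-1/\alpha<0$. First I would pass to symbolic dynamics as in \S\ref{SD}: given $\xi$ with $\eta:=\limsup_n\|\xi\alpha^n\|$ at most the asserted threshold $b/(1+\alpha^2)$, fix a limsup word $(w_n)\in\B^{\Z}$ satisfying (\ref{Limsup}) and write $((w_n))_\alpha$ as the two-sided series (\ref{Eqn:3}) with $\alpha_2=-1/\alpha$. The estimate in (\ref{Eqn:2}) forces $w_n\in\{-1,0,1\}$ in this range of $\eta$, and a Dubickas-type forbidden-factor analysis — now carried out under the two-sided constraint $|(\sigma^k(w))_\alpha|\le\eta$ for every $k\in\Z$ — would show that $(w_n)$ avoids a finite list of factors; recoding the gap structure between the symbols $\pm1$, as in \S\ref{Discrete_Int}, identifies the admissible words with the Markoff shift space $\X_F$, which by Theorem \ref{Balanced} is precisely the space of bi-infinite balanced words. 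So $(w_n)$ is eventually balanced.

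Next I would dichotomize on whether $(w_n)$ is eventually periodic. In the eventually periodic case $(w_n)$ is governed by a Christoffel word, and I would compute $((w_n))_\alpha$ by telescoping the geometric tails of (\ref{Eqn:3}); optimizing over the Stern--Brocot family of Christoffel slopes, the mediant step yields the two interleaved recursions $p_{2n+2}=(b^2+2)p_{2n}-p_{2n-2}$, $p_{2n-1}=p_{2n-2}+p_{2n}$ and likewise for the $q_n$, the odd-indexed terms coming from the mediant. The non-eventually-periodic case is exactly the accumulation point: $(w_n)$ must then be a genuine Sturmian word whose slope is pinned to $\lim_n p_n/q_n=b/(1+\alpha^2)$ by the same unavoidable symmetry of Sturmian words used for Theorem \ref{thm:main}, while conversely every eventually balanced word of that slope realizes the value $b/(1+\alpha^2)$; this gives the uncountable set $X_\infty$ and its description by eventually balanced words. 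Strict monotonicity and the convergence $p_n/q_n\to b/(1+\alpha^2)$, together with the identity $p_{2n}/q_{2n}=P_n/Q_n$ for the convergents of $\frac1{1+\alpha^2}=[0;b^2+3,b^2+2,b^2+2,\dots]_{neg}$, then follow from a routine check that $q_0=1$, $q_2=b^2+3$, $q_{2n+2}=(b^2+2)q_{2n}-q_{2n-2}$ is the denominator recursion of that negative continued fraction and that $p_{2n}=bP_n$, $q_{2n}=Q_n$ — equivalently, these are the convergent data attached to the quadratic unit $\alpha^2$, whose minimal polynomial is $y^2-(b^2+2)y+1$ and whose conjugate $1/\alpha^2$ is positive.

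For the Corollary I would specialize the symbolic description to $n=0,1$. The set $X_0$ consists of the $\xi$ with $\lim_n\|\xi\alpha^n\|=0$, which by the realization formulas (\ref{Eqn:2})--(\ref{Eqn:3}) is the set of $\xi$ whose integer sequence $(s_n(\alpha;\xi))$ is eventually zero, namely $\frac1{\alpha-\alpha_2}\Z[\alpha]$. For $X_1$ the associated limsup word is the Christoffel word of slope $p_1/q_1=b/(b^2+4)$; a direct evaluation would show that $\frac{\alpha}{4+b^2}$ is a representative and that the realization map is injective modulo $X_0$ on this stratum, so $X_1=\pm\frac{\alpha}{4+b^2}+X_0$. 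The two displayed lower bounds are then immediate from the monotonicity of $(p_n/q_n)$ and the values $p_1/q_1=b/(4+b^2)$, $p_2/q_2=b/(3+b^2)$.

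The hard part will be the sign of the conjugate. With $\alpha_2=-1/\alpha$, the ``past'' tail $\dots+s_{n-2}\alpha_2^2+s_{n-1}\alpha_2+s_n$ in (\ref{Eqn:3}) carries alternating signs, so the extremal-word analysis of the case $\alpha_2>0$ does not transfer verbatim: I would have to redo the optimization of $|((w_n))_\alpha|$ over balanced words subject to all shifts being $\le\eta$. The cleanest route seems to be to separate even and odd shifts — which is precisely what produces the two interleaved sequences $(p_{2n}/q_{2n})$ and $(p_{2n-1}/q_{2n-1})$ and the extra factor $b$ in $b/(1+\alpha^2)$ — reducing the even part to the positively self-similar base-$\alpha^2$ problem. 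Once that combinatorial core is settled, the recursions, the negative continued fraction identity and the Corollary should be bookkeeping.
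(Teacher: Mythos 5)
Your proposal matches the paper's strategy: the proof of Theorem \ref{thm:main2} proceeds exactly by showing, via the forbidden-word lemmas for $\mathcal{S}_1,\dots,\mathcal{S}_5$ and Lemma \ref{lem:2-1}, that the limsup word lies in $((00)^*+(010\overline{1})^*)^{\Z}$ or $((00)^*+(0\overline{1}01)^*)^{\Z}$, so every other symbol vanishes and the problem reduces to $\limsup_n\|\xi\alpha^{2n}\|$ with the positive conjugate $\alpha_2^2=1/\alpha^2$ --- precisely your ``separate even and odd'' reduction --- after which the Christoffel-word machinery of Theorem \ref{thm:main} applies with $\alpha$ replaced by $\alpha^2$, and the Corollary is the specialization $p_1/q_1=b/(4+b^2)$, $p_2/q_2=b/(3+b^2)$ together with identifying the words $0^{\Z}$ and $(010\overline{1})^{\Z}$ behind $X_0$ and $X_1$. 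Two cautions on wording only: the forbidden set is an infinite family handled by induction, not a finite list, and the quantities $p_n/q_n$ are the limsup values attached to Christoffel words of increasing length rather than the slopes of those words (in particular the Sturmian words describing $X_\infty$ may have arbitrary irrational slope).
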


In Theorems \ref{thm:main} and \ref{thm:main2}, we have  $X_n\subset \Q(\alpha)$ for $n=0,1,\dots$ and 
$X_{\infty}$ contains a transcendental number. Thus, we get a 
\begin{cor}
Let $\alpha>3$ be a quadratic unit. Then
$$
\inf_{\xi\in \R\setminus \Q(\alpha)} 
\limsup_{n\to\infty}\|\xi\alpha^n\|
=\begin{cases} 1/(1+\alpha)
& \text{ for } \alpha_2>0\\
b/(1+\alpha^2)
& \text{ for } \alpha_2<0.\end{cases}
$$
\end{cor}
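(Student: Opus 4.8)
The plan is to derive this corollary directly from Theorems \ref{thm:main} and \ref{thm:main2} together with the transcendence information packaged in $X_\infty$. The corollary asserts a single equality for each sign of $\alpha_2$, so I would treat the two cases in parallel, writing $c=c(\alpha)$ for the relevant minimum limit point, i.e. $c=1/(1+\alpha)$ when $\alpha_2>0$ and $c=b/(1+\alpha^2)$ when $\alpha_2<0$. The two halves of the argument are a lower bound $\inf_{\xi\notin\Q(\alpha)}\limsup_n\|\xi\alpha^n\|\ge c$ and the reverse inequality $\le c$, and I would establish them separately.

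For the lower bound, suppose $\xi\in\R$ with $\limsup_{n\to\infty}\|\xi\alpha^n\|<c$. Then $\limsup_n\|\xi\alpha^n\|$ lies in $\LL(\alpha)\cap[0,c)$, which by Theorem \ref{thm:main} (resp.\ Theorem \ref{thm:main2}) equals $\{p_n/q_n\mid n=0,1,2,\dots\}$, since the only point of the intersection with $[0,c]$ not of this form is $c$ itself. Hence $\limsup_n\|\xi\alpha^n\|=p_n/q_n$ for some $n$, which means $\xi\in X_n$, and the same theorem tells us $X_n\subset\Q(\alpha)$. Contrapositively, every $\xi\in\R\setminus\Q(\alpha)$ satisfies $\limsup_n\|\xi\alpha^n\|\ge c$, which gives $\inf_{\xi\notin\Q(\alpha)}\limsup_n\|\xi\alpha^n\|\ge c$.

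For the reverse inequality I would exhibit an irrational (in fact transcendental) $\xi$ realizing the value $c$. By Theorems \ref{thm:main} and \ref{thm:main2} the set $X_\infty=\{\xi\in\R\mid\limsup_n\|\xi\alpha^n\|=c\}$ is uncountable and described via eventually balanced words; since $\Q(\alpha)$ is countable, $X_\infty\not\subset\Q(\alpha)$, so there exists $\xi_0\in X_\infty\setminus\Q(\alpha)$ with $\limsup_n\|\xi_0\alpha^n\|=c$. (The sentence preceding the corollary in fact asserts more, that $X_\infty$ contains a transcendental number, and I would simply invoke that; its justification belongs to the construction of $X_\infty$ in \S\ref{LBW}--\S\ref{Proofs} and need not be reproduced here.) Therefore $\inf_{\xi\notin\Q(\alpha)}\limsup_n\|\xi\alpha^n\|\le c$, and combined with the lower bound we get equality, proving the corollary in both cases.

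The only genuine subtlety — and the reason a one-line proof is slightly too quick — is making sure that the value $c$ is \emph{not} attained by any $\xi\in\Q(\alpha)$; that is what forces the infimum to be exactly $c$ rather than strictly larger, and it is exactly the content of $X_\infty\setminus\Q(\alpha)\ne\emptyset$. This is not an obstacle so much as a point to state carefully: it rests entirely on the uncountability (or transcendence) assertion in Theorems \ref{thm:main} and \ref{thm:main2}, which is proved earlier and may be assumed here. Everything else is immediate bookkeeping from the explicit description of $\LL(\alpha)\cap[0,c]$ and the inclusions $X_n\subset\Q(\alpha)$.
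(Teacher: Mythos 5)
Your proposal is correct and follows essentially the same route as the paper, which deduces the corollary immediately from the facts stated just before it: $X_n\subset\Q(\alpha)$ for all finite $n$ (giving the lower bound via the explicit description of $\LL(\alpha)\cap[0,c]$) and $X_\infty$ containing a transcendental number (giving attainment). Your substitution of the uncountability of $X_\infty$ versus the countability of $\Q(\alpha)$ for the transcendence assertion is an equally valid and equivalent way to finish.
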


\section{Characterization of balanced words}
\label{LBW}
We introduce the second key of this paper: 
a characterization of the set of bi-infinite balanced words.
Our proof of Theorems \ref{thm:main} and \ref{thm:main2} will be finished by using an unavoidable 
symmetry of {\bf balanced words} or {\bf sturmian words} in \S \ref{Proofs} .

Let $\A=\{0,1\}$ and
\(v=v_1v_2\ldots v_n\) be a non empty word on \(\A\). Write the length of 
\(v\) by \(|v|=n\). The mirror word of $v$ is denoted by 
\(\widetilde{v}=v_n v_{n-1}\ldots v_1\) and set \(\widetilde{\lambda}=\lambda\). 
Put \(v^0=\lambda\) and $v^k=\underbrace{vv\ldots v}_k$ for $k\in \N$. 
We are interested in a subshift of the full-shift $\A^{\Z}$ (resp $\A^{\N}$). 
Set $F=\{ 0v01\tilde{v}1, 1\tilde{v}10v0\ |\ v\in \A^* \}$ and 
we study the subshift $\X_F$ (resp $\Y_F$) of $\A^{\Z}$ (resp $\A^{\N}$) defined by 
the set $F$ of forbidden words. 
In this section, we prove Theorem \ref{Balanced}, a restatement of Reutenauer \cite[Theorem 3.1]{Reutenauer:06},
which asserts that $\X_F$ is the set of bi-infinite balanced words.
The proof is different from \cite{Reutenauer:06, Yasutomi} and self-contained.
This formulation admits a one-sided version (Theorem \ref{Balanced2}) as well, which will be used 
in the proof of Theorem \ref{thm:main}.

Clearly $\X_F$ and $\Y_F$ are invariant by the involution $0 \mapsto 1, 1\mapsto 0$. 
Moreover, $\X_F$ is invariant by the mirror map $\x\mapsto \tilde{\x}$. 
We use the standard notation in language theory, e.g., 
$(1+10)^+$ is the set of non empty finite words generated by $1$ and $10$. 
$B^{\N}$ (resp. $B^{\Z}$) is the set of
right infinite words (resp. bi-infinite words) generated by
$B\subset \A^*$. 
For $v\in \A^+$, we denote by $v^{\N}$ (resp. $v^{\Z}$) the right infinite (resp. bi-infinite) word 
concatenating $v$. 
For $u, v\in \A^*$ we denote by $u\prec v$ if $u$ is a factor (subword) of
$v$.  This notation naturally extends to $\x\in \A^{\Z}$, i.e., $u\prec \x$ if
$u\in \A^*$ is a factor of $\x$.  
As usual, $u\not \prec v$ means $u\prec v$ does not hold.
For $\x=(x_i)_{i\in \Z}\in \A^{\Z}$, we
write $\x[i,j]=x_i\dots x_j$ for $i\le j$ and $\x[i,j]=\lambda$ if $i>j$. 
The relation $\prec$ does not contain 
information on the position of the factor except when we write 
$w_1\x[i,j]w_2\prec \x$ with $w_1, w_2\in \A^*$; it means $w_1=\x[\ell,i-1]$ and $w_2=\x[j+1,k]$
with some $\ell$ and $k$. 

\begin{lem}
\label{Diff}
If $10^a10^b1$ is a factor of a word $\x$ in $\X_F$, 
then $|a-b|\le 1$.
\end{lem}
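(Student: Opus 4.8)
The plan is to argue by contradiction. Suppose $10^a10^b1 \prec \x$ for some $\x \in \X_F$, and suppose toward a contradiction that $|a-b|\ge 2$. Since $\X_F$ is invariant under the mirror map $\x \mapsto \tilde\x$ (as recorded just above), and since the mirror word of $10^a10^b1$ is $10^b10^a1$, replacing $\x$ by $\tilde\x$ if necessary we may assume without loss of generality that $a \ge b+2$.

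The heart of the argument is to spot a member of the forbidden set $F$ lying inside $10^a10^b1$. I would take $v = 0^b$, so $\tilde v = 0^b$ and the word
\[
0v01\tilde v1 = 0^{b+2}\,1\,0^{b}\,1
\]
belongs to $F$. Because $a \ge b+2$ we can write $0^a = 0^{\,a-b-2}\,0^{\,b+2}$, so $0^{b+2}10^b1$ is a suffix of $0^a10^b1$, hence a factor of $10^a10^b1$, hence a factor of $\x$. This contradicts $\x \in \X_F$. The symmetric case $b \ge a+2$ is handled identically after passing to $\tilde\x$, and one concludes $|a-b|\le 1$.

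There is no serious obstacle here: the only point requiring a moment's care is the degenerate case $b=0$, where $v=\lambda$ and the forbidden word is simply $0011$, which is legitimate since $\lambda \in \A^*$. All the content of the lemma is in the choice $v = 0^b$ together with the mirror symmetry of $\X_F$.
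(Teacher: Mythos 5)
Your proof is correct and follows essentially the same route as the paper: both arguments locate a forbidden word with $v$ a block of zeros inside $10^a10^b1$ (you use $0v01\tilde v1$ with $v=0^b$ after reducing to $a\ge b+2$ via mirror symmetry; the paper uses $1\tilde v10v0$ with $v=0^a$ in the case $b\ge a+2$ and declares the other case similar). The two are mirror images of one another, so there is nothing substantive to distinguish them.
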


\begin{proof}
If $b\ge a+2$, then $10^{a}{\bf 10}0^{a+1}\in F$ is a factor of $\x$. The case $b\le a-2$ is similar.
\end{proof}
\begin{lem}\label{main1}
Take $\x\in \X_F$.
Assume that $\x[i,j]=(10^a)^n10^{a+1}10^a1(0^a1)^n\prec \x$ 
or $\x[i,j]=(10^{a+1})^n10^{a}10^{a+1}1(0^{a+1}1)^n\prec \x$ with $n\in \N$. 
Then $0^a\x[i,j]0^a\prec \x$ , $0^{a+2}\x[i,j] \not \prec \x$ and
 $\x[i,j]0^{a+2} \not \prec \x$. 
\end{lem}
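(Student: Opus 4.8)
The plan is to work with the first case, $\x[i,j]=(10^a)^n10^{a+1}10^a1(0^a1)^n$; the second is entirely symmetric under the involution $0\leftrightarrow 1$ combined with the mirror map (both of which preserve $\X_F$). First I would establish the two non-extendability statements $0^{a+2}\x[i,j]\not\prec\x$ and $\x[i,j]0^{a+2}\not\prec\x$. For this, observe that $\x[i,j]$ begins and ends with $1$, so prepending $0^{a+2}$ would create the factor $10^{a+1}\,{\bf 1}\,0^{a+2}\,10^a\ldots$ — more precisely, $\x[i,j]$ starts with $10^a1$, so $0^{a+2}\x[i,j]$ contains $10^{a+2}10^a1$, and Lemma \ref{Diff} (with the roles $10^{a+2}10^a1$, i.e. a gap difference of $2$) is violated. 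The same argument, read from the right using that $\x[i,j]$ ends with $10^a1$, handles $\x[i,j]0^{a+2}$. A small care point: I must make sure the $1$ immediately preceding position $i$ (resp.\ following $j$) together with the inserted zeros genuinely produces a forbidden $10^{c}10^{a}1$ with $|c-a|\ge 2$; this is where I would spell out that the prepended block contributes exactly $a+2$ zeros between two $1$'s, one of which is the first $1$ of $\x[i,j]$.

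The harder half is showing $0^a\x[i,j]0^a\prec\x$, i.e.\ that $\x[i,j]$ \emph{must} be surrounded by at least $a$ zeros on each side inside $\x$. By Lemma \ref{Diff}, every maximal block of zeros in $\x$ that is flanked by a block of exactly $a$ zeros on one side has length $a-1$, $a$, or $a+1$; and by the non-extendability just proved it cannot be $\ge a+2$ adjacent to $\x[i,j]$. So the block of zeros immediately to the left of the leading $1$ of $\x[i,j]$ has length $a-1$, $a$, or $a+1$, and I must rule out $a-1$ (and symmetrically on the right). The key is to use a forbidden word from $F$ of the shape $0w01\tilde w1$. I would look at the suffix structure of $\x[i,j]$: it ends with $1(0^a1)^n = 10^a10^a1\cdots10^a1$, a palindromic-looking tail, and its prefix is $(10^a)^n10^{a+1}\cdots$. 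If the flanking zero-block on the left were $0^{a-1}$, then reading across the boundary I would produce, for a suitable choice of $w$ built from the blocks $0^a1$, the forbidden pattern $0\,w\,0\,1\,\tilde w\,1$ (using that $\tilde w=w$ for these palindromic stretches of $(10^a)$-type, and that the $0^{a+1}$ in the middle of $\x[i,j]$ plays the role of the distinguished longer gap that breaks the balance). Concretely, the middle $10^{a+1}10^a1$ sits asymmetrically: to its left we have $(10^a)^n$, to its right $(0^a1)^n$, so if we try to match it against $0^{a-1}$ on the outer left we get a mismatch of the exact type $0v01\tilde v1$ that $F$ forbids.

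The main obstacle I anticipate is choosing the word $w$ correctly and verifying the mirror identity $w=\tilde w$ together with the precise bookkeeping of zero-block lengths across the boundary — the statement of $F$ is delicate about which block is $0w0$ and which is $01\tilde w1$, and the $+1$ discrepancy (the lone $0^{a+1}$) has to be tracked through the reversal. I would organize this as: (1) name the zero-blocks $b_{-1},b_0,b_1,\dots$ around $\x[i,j]$; (2) show $b_k\in\{a-1,a,a+1\}$ for the relevant range via Lemma \ref{Diff} plus a short induction propagating from the known interior blocks of $\x[i,j]$ (which are all $a$ except the single $a+1$); (3) assume for contradiction the outer block is $a-1$ and exhibit the explicit element of $F$; (4) invoke the mirror symmetry of $\X_F$ to get the right-hand side. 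Steps (1)–(2) are routine given Lemma \ref{Diff}; step (3) is the crux and where the $0w01\tilde w1$ shape must be produced on the nose.
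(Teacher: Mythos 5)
Your proposal has several genuine gaps, and two of its claimed shortcuts are simply false. First, the reduction of the second case to the first via ``the involution $0\leftrightarrow 1$ combined with the mirror map'' does not work: the involution turns $0$-blocks into $1$-blocks and so cannot relate the two stated forms at all, and the mirror of $(10^{a+1})^n10^{a}10^{a+1}1(0^{a+1}1)^n$ is $(10^{a+1})^n10^{a+1}10^a1(0^{a+1}1)^n$, which is \emph{not} the first form once $n\ge 1$ (the outer blocks are $0^{a+1}$, not $0^a$). The two cases genuinely require separate treatment, and indeed the paper's proof handles them with different centers of forbidden words. Second, the non-extendability claims do not follow from Lemma \ref{Diff} alone in the cases you gloss over: for the first form with $n=0$ the word \emph{begins} with $10^{a+1}1$, not $10^a1$ as you assert, so $0^{a+2}\x[i,j]$ only produces $10^{c}10^{a+1}1$ with $c\ge a+2$, which violates nothing for $c=a+2$; symmetrically, for the second form $\x[i,j]0^{a+2}$ only produces $10^{a+1}10^{a+2}$, again consistent with Lemma \ref{Diff}. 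The paper closes exactly these cases by a further forbidden-word argument showing that $0^{a+2}$ on one side would force $0^{a+2}$ on the other side as well (via the words $0w01\tilde w1$ centered at the ${\bf 01}$ inside the middle $0^{a+1}$), which is then absurd.

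Third, the part you correctly identify as the crux --- exhibiting the element of $F$ that rules out a flanking block of length $a-1$ --- is left as a plan rather than carried out, and the surrounding bookkeeping is off: for the $n=0$ first form the \emph{left}-adjacent block is constrained by Lemma \ref{Diff} to $\{a,a+1,a+2\}$ (so nothing of length $a-1$ needs excluding there; it is $a+2$ that must be excluded), while it is on the \emph{right}, where $\x[i,j]$ ends in $10^a1$, that $a-1$ must be ruled out. The explicit words that do the work are, e.g., $0\,(0^{k}10^a)\,01\,(0^a10^{k})\,1$ for the ${\bf 01}$ centered inside the $0^{a+1}$-block, and $1\,(0^{a-1}(10^a)^n)\,10\,((0^a1)^n0^{a-1})\,0$ for the ${\bf 10}$ center arising from the decomposition $\x[i,j]=(10^a)^n{\bf 10}(0^a1)^{n+2}$; none of these appears in your write-up. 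In short, the overall strategy (Lemma \ref{Diff} plus forbidden words from $F$ centered at the asymmetric middle) is the right one and matches the paper, but as written the argument fails for the base case $n=0$, fails for the entire second family when $n\ge1$, and does not actually produce the forbidden words on which everything rests.
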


\begin{proof}
Let $n=0$ and $\x[i,j]=10^{a+1}10^a1\prec \x$. 
By Lemma \ref{Diff}, $0^a$ must be prepended to $\x[i,j]$. 
Thus it must be followed by $0^a$ by considering 
 the forbidden words centered by ${\bf 01}$ around $0^a10^{a}{\bf 01}0^a1$, i.e., 
 $0^a\x[i,j]0^a\prec \x$\textcolor{red}{. 
By Lemma} \ref{Diff},  $\x[i,j]0^{a+2} \not \prec \x$.  If
$0^{a+2}\x[i,j] \prec \x$ then $\x[i,j]$ must be followed by $0^{a+2}$ \textcolor{red}{by considering 
 the forbidden words centered by ${\bf 01}$ around $0^{a+2}10^{a}{\bf 01}0^a1$, i.e., 
 $0^{a+2}\x[i,j]0^{a+2}\prec \x$}, 
which is absurd.
Now let $\x[i,j]=(10^a)^n10^{a+1}10^a1(0^a1)^n\prec \x$ with $n\ge 1$.
We have $$\x[i,j]=(10^a)^n{\bf 10} (0^a1)^{n+2}=(10^a)^{n+1}{\bf 01}(0^a1)^{n+1}.$$ 
Considering
the forbidden words centered at ${\bf 10}$, we see $0^a\x[i,j]\prec \x$. Switching focus to 
the forbidden words centered at ${\bf 01}$, we see $0^a\x[i,j]0^a\prec \x$.
Clearly $0^{a+2}\x[i,j] \not \prec \x$ and
 $\x[i,j]0^{a+2} \not \prec \x$ by Lemma \ref{Diff}.

Let $n=0$ and $\x[i,j]=10^{a}10^{a+1}1\prec \x$. 
Then $\x[i,j]0^a\prec \x$ by Lemma \ref{Diff}.  Considering 
$10^{a}{\bf 10} 0^a1$ centered at ${\bf 10}$, we have $0^a\x[i,j]0^a\prec \x$.
Since $0^{a+2}\x[i,j] \not \prec \x$, $\x[i,j]0^{a+2} \prec \x$ leads to 
$0^{a+2}\x[i,j]0^{a+2} \prec \x$, a contradiction.
Now we assume 
$\x[i,j]=(10^{a+1})^n10^{a}10^{a+1}1(0^{a+1}1)^n\prec \x$ 
with $n\ge 1$. By Lemma \ref{Diff}, we have $0^a\x[i,j]0^a\prec \x$. We use two centers
of the forbidden words: 
$$\x[i,j]=(10^{a+1})^{n-1}10^{a}{\bf 01}0^a1(0^{a+1}1)^{n+1}
=(10^{a+1})^{n}10^{a}{\bf 10}0^a1(0^{a+1}1)^{n}.$$  
By focusing on ${\bf 01}$, we see $0^{a+2}\x[i,j]\not \prec \x$. 
Thus by focusing on $\bf{10}$, 
$\x[i,j]0^{a+2}\prec \x$  implies $0^{a+2}\x[i,j]0^{a+2}\prec \x$ which is impossible.
\end{proof}

Lemma \ref{main1}
shows that the two factors in question 
must be continued to the 
right only by $0^a1$ or $0^{a+1}1$, and to the left only by $10^a$ or $10^{a+1}$.
We have further restrictions.

\begin{lem}\label{main2}
Assume that $\x\in \X_F$.
If $\x[i,j]=(10^a)^n10^{a+1}10^a1(0^a1)^{n+1}\prec \x$, then 
we have $$10^a\x[i,j]=
(10^a)^{n+1}10^{a+1}10^a1(0^a1)^{n+1}\prec \x.$$ 
If $\x[i,j]=(10^{a+1})^n10^{a}10^{a+1}1(0^{a+1}1)^{n+1}\prec \x$, 
then $$
10^{a+1}\x[i,j]=(10^{a+1})^{n+1}10^{a}10^{a+1}1(0^{a+1}1)^{n+1}\prec \x.$$
\end{lem}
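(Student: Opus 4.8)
The plan is to treat the first assertion in detail; the second follows by the involution $0\leftrightarrow 1$ together with a shift, so I will only indicate the minor changes at the end. Write $v=\x[i,j]=(10^a)^n10^{a+1}10^a1(0^a1)^{n+1}$, a factor of $\x\in\X_F$. By Lemma \ref{main1} (applied to the sub-factor $(10^a)^n10^{a+1}10^a1(0^a1)^n$ of $v$), the block $v$ must be preceded inside $\x$ either by $10^a$ or by $10^{a+1}$; our goal is to rule out the second possibility, i.e.\ to show $10^{a+1}v\not\prec\x$, which then forces $10^av\prec\x$ as claimed. So suppose for contradiction that $10^{a+1}v\prec\x$, that is,
\[
\x[i-a-2,\,j]=10^{a+1}(10^a)^n10^{a+1}10^a1(0^a1)^{n+1}\prec\x.
\]

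The key step is to locate a well-chosen palindromic center in this extended word and derive a forbidden factor from $F$. The natural center to look at is the ${\bf 01}$ sitting at the junction $10^{a+1}\mid (10^a)^n\cdots$: reading outward from it, on the right we encounter the blocks $(0^a1)^{n+1}10^{a}\cdots$ and on the left the blocks $10^{a+1}$, and the asymmetry between the leading $0^{a+1}$ on the left and the $0^a$ runs on the right should produce a word of the shape $0w01\widetilde w1$ or $1w10\widetilde w0$ in $F$. Concretely, I would align the extended word around the pair ${\bf 10}$ occurring in the central segment $10^{a+1}10^a1$, rewrite $v$ once more in the two forms already used in the proof of Lemma \ref{main1}, namely $v=(10^a)^{n+1}{\bf 01}(0^a1)^{n+1}$ and $v=(10^a)^{n}{\bf 10}(0^a1)^{n+2}$, and then check which of the two chosen centers, when flanked by the prescribed prefix $10^{a+1}$, yields an exact mirror-pair violation. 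The bookkeeping is: the prefix contributes an extra $0$ on the left at a position where the mirror image on the right carries only $0^a$ (not $0^{a+1}$), and that single discrepancy is precisely what converts the otherwise-balanced palindrome into a member of $F$.

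The main obstacle is purely combinatorial: one must count the $0$-runs on both sides of the chosen center with care, because the word $v$ has \emph{two} anomalous blocks ($10^{a+1}1$ appearing once among many $10^a1$'s) placed asymmetrically relative to the $(n+1)$-fold repetition on the right versus the $n$-fold repetition on the left, and it is easy to be off by one in identifying where the forbidden factor $0w01\widetilde w1$ begins and ends. I would handle this by writing $w$ explicitly as a concatenation of the blocks $0^a1$ and $0^{a+1}1$ read from the center outward, verifying $|0w01|=|1\widetilde w1|$ by a length count (each side has the same number of $1$'s once the extra prefix $0$ is accounted for), and then checking letter-by-letter that the two halves are genuine mirror images, the single forced mismatch being absorbed by the outer $0\cdots 1$ versus $1\cdots 1$ framing of the pattern in $F$. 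Once the forbidden factor is exhibited, the contradiction is immediate.

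For the second assertion, apply the alphabet involution $0\leftrightarrow 1$: it maps $\X_F$ to itself and sends $(10^{a+1})^n10^{a}10^{a+1}1(0^{a+1}1)^{n+1}$ to a word of the form handled above (up to the roles of $a$ and $a+1$ and a harmless shift of the center from ${\bf 01}$ to ${\bf 10}$), so the identical argument gives $10^{a+1}\x[i,j]\prec\x$, completing the proof.
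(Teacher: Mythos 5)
Your overall strategy for the first assertion is the paper's: use Lemma \ref{main1} to reduce the left extension to $10^a$ or $10^{a+1}$, then eliminate one option by exhibiting a forbidden factor read outward from a palindromic center. But the execution has concrete problems. The ``natural'' center you propose first, the $\mathbf{01}$ at the junction $10^{a+1}\mid(10^a)^n\cdots$, cannot work: reading outward from that junction the two sides agree for $a+1$ letters ($0^a$ then $1$), and beyond that the left side runs into the unknown part of $\x$ preceding the hypothesized prefix, so no forbidden factor is forced there. The center that works is the $\mathbf{01}$ inside $\x[i,j]$ itself; note that the correct decompositions are $\x[i,j]=(10^a)^{n+1}\mathbf{01}(0^a1)^{n+2}=(10^a)^{n}\mathbf{10}(0^a1)^{n+3}$ (the exponents you wrote are copied from the shorter word of Lemma \ref{main1} and are each one block short on the right). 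From $(10^a)^{n+1}\mathbf{01}(0^a1)^{n+2}$ the two sides read outward agree for $(n+1)(a+1)$ letters, then Lemma \ref{main1} supplies $0^a$ on the left matching the $0^a$ of the $(n+2)$nd right block, and at the next position the right side reads $1$; a $0$ on the left would complete a factor $0w\mathbf{01}\tilde{w}1\in F$, so the left letter is $1$ and $10^a\x[i,j]\prec\x$. This is exactly the paper's one-line argument, so once the off-by-one bookkeeping is fixed your plan for the first half goes through.

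The genuine gap is the second assertion. The alphabet involution $0\leftrightarrow1$ sends $(10^{a+1})^n10^{a}10^{a+1}1(0^{a+1}1)^{n+1}$ to a word whose maximal runs are runs of $1$'s separated by isolated $0$'s, i.e.\ into the family $(0(1^a+1^{a+1}))^{*}$ --- not to a word of the first type. The second assertion swaps the roles of the run lengths $a$ and $a+1$ \emph{within} the family $(1(0^a+0^{a+1}))^{*}$, and that swap is not induced by any symmetry of $\X_F$ available to you (the mirror map also fails: applied to the first word it interchanges $n$ and $n+1$ but stays in the first family). Moreover the conclusion is qualitatively different --- here the \emph{long} block $10^{a+1}$ is forced --- and the paper proves it by a separate, parallel computation: writing $\x[i,j]=(10^{a+1})^{n}10^{a}\mathbf{10}0^a1(0^{a+1}1)^{n+1}$ and reading outward from the $\mathbf{10}$ center, the extension $10^a$ would complete a forbidden factor $1w\mathbf{10}\tilde{w}0\in F$, hence is excluded, leaving $10^{a+1}\x[i,j]\prec\x$. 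You need to supply this second computation explicitly; it does not follow formally from the first.
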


\begin{proof}
Assume that
$\x[i,j]=(10^a)^{n+1}{\bf 01}(0^a1)^{n+2}\prec \x$. By Lemma \ref{main1},
$0^a\x[i,j]\prec \x$ and considering the forbidden words centered at ${\bf 01}$ we see
$10^a\x[i,j]\prec \x$. 
If $\x[i,j]=(10^{a+1})^{n}10^{a}{\bf 10}0^a1(0^{a+1}1)^{n+1}$, then Lemma \ref{main1} implies
$0^{a}\x[i,j]\prec \x$ and the forbidden words centered at ${\bf 10}$ shows 
$10^a\x[i,j]\not \prec \x$.  Therefore $10^{a+1}\x[i,j]\prec \x$.
\end{proof}

\begin{lem}\label{main3}
If  $10^a10^{a+1}1$ or $10^{a+1}10^a1$ is a factor of $\x\in \X_F$, 
then $\x\in (1(0^a+0^{a+1}))^{\Z}$.
\end{lem}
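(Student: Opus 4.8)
The plan is to iterate Lemmas~\ref{main1} and~\ref{main2} to propagate the "defect" block indefinitely in both directions, and conclude that once a mismatched pair $10^a10^{a+1}1$ (or its mirror) occurs, the whole word is forced to lie in $(1(0^a+0^{a+1}))^{\Z}$. Concretely, suppose $10^a10^{a+1}1\prec \x$; this is the $n=0$ instance $\x[i,j]=10^{a}10^{a+1}1(0^{a+1}1)^{0}$ of the second type in Lemma~\ref{main2} once we rewrite it suitably (the two families in Lemmas~\ref{main1},\ref{main2} are mirror images of each other, so it suffices to treat one of them and invoke the mirror symmetry of $\X_F$ for the other). First I would use Lemma~\ref{main1} to get $0^a\x[i,j]0^a\prec \x$ together with the exclusions $0^{a+2}\x[i,j]\not\prec\x$ and $\x[i,j]0^{a+2}\not\prec\x$. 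Combined with Lemma~\ref{Diff}, this says the block can only be extended on either side by appending $10^a$ (left) or $0^a1$ (right), or by the "thicker" $10^{a+1}$ / $0^{a+1}1$; in other words every run of $0$'s adjacent to the defect has length $a$ or $a+1$.

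Next I would run the induction driven by Lemma~\ref{main2}: starting from the base factor, Lemma~\ref{main2} forces a specific one-sided extension, $10^a\x[i,j]\prec\x$ (or $10^{a+1}\x[i,j]\prec\x$), which is again a factor of the same shape with $n$ increased by one, so the hypothesis of Lemma~\ref{main2} is reinstated and the argument repeats. Iterating, the word is forced arbitrarily far to the left to be a concatenation of blocks from $\{10^a,10^{a+1}\}$. By the mirror symmetry $\x\mapsto\tilde\x$ of $\X_F$ (recorded right after the definition of $\X_F$), the symmetric statements extend the forced region arbitrarily far to the right as well. Since $i$ may be taken arbitrarily small and $j$ arbitrarily large, every position of $\x$ eventually falls inside such a forced region, so $\x$ is globally a bi-infinite concatenation of $10^a$ and $10^{a+1}$, i.e. $\x\in(1(0^a+0^{a+1}))^{\Z}$. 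The case $10^{a+1}10^a1\prec\x$ is handled identically after applying the involution $0\leftrightarrow1$ or the mirror map, which send $F$ to itself.

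The step I expect to be the main obstacle is the bookkeeping that makes the induction genuinely two-sided and "fills in" all positions. Each application of Lemma~\ref{main2} only guarantees growth on one designated side, and the two lemma families grow toward opposite ends; I need to check that alternating between a statement and its mirror (and possibly its $0\leftrightarrow1$ image) really does extend the controlled block on both sides without ever being blocked — the exclusions $0^{a+2}(\cdot)\not\prec\x$, $(\cdot)0^{a+2}\not\prec\x$ from Lemma~\ref{main1} are exactly what prevents a "thicker" run from interrupting the pattern, so the key is to verify these exclusions persist at every stage of the iteration (they do, since each extended factor is again of the form covered by Lemma~\ref{main1}). Once this is set up cleanly, the conclusion $\x\in(1(0^a+0^{a+1}))^{\Z}$ is immediate, and no further computation is needed.
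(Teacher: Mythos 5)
Your overall strategy --- propagate the defect block by iterating Lemmas \ref{main1} and \ref{main2} and handle the opposite direction via the mirror symmetry of $\X_F$ --- is the same as the paper's, but your induction has a genuine gap at its central step. You assert that after applying Lemma \ref{main2} ``the hypothesis of Lemma \ref{main2} is reinstated with $n$ increased by one.'' It is not. Lemma \ref{main2} takes the asymmetric factor $(10^a)^n10^{a+1}10^a1(0^a1)^{n+1}$ and produces the \emph{symmetric} factor $(10^a)^{n+1}10^{a+1}10^a1(0^a1)^{n+1}$, which is the hypothesis of Lemma \ref{main1}, not of Lemma \ref{main2}. To re-enter Lemma \ref{main2} at level $n+1$ you need the word to continue on the right by one more $0^a1$, and Lemma \ref{main1} (together with Lemma \ref{Diff}) only guarantees that the continuation is $0^a1$ \emph{or} $0^{a+1}1$. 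In the second case your induction simply halts after finitely many steps, and this case cannot be excluded --- it is precisely why the conclusion is $(1(0^a+0^{a+1}))^{\Z}$ rather than a periodic word; without addressing it, positions beyond the finite forced region are never covered.

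The missing idea, which is the heart of the paper's proof, is what to do when the continuation is $0^{a+1}1$: then a new defect $10^a10^{a+1}1$ of the \emph{opposite} type appears strictly further to the right, the stretch between the two defects already has the required form $10^{a+1}1(0^a1)^+0^{a+1}1$, and one restarts the entire argument from the new defect with the roles of $0^a$ and $0^{a+1}$ exchanged (using the second halves of Lemmas \ref{main1} and \ref{main2}). Iterating this dichotomy --- either the tail is eventually $(0^a1)^\infty$ or $(0^{a+1}1)^\infty$, or defects of alternating type recur indefinitely --- covers the whole right-infinite extension, and the left follows by the mirror argument you already invoke. A smaller inaccuracy: you describe the iteration as pushing the controlled region ``arbitrarily far to the left,'' but each leftward prepending granted by Lemma \ref{main2} is paid for by one extra $0^a1$ on the right, so the leftward reach of that iteration is bounded by the rightward one; the left direction genuinely requires a separate mirror run of the whole argument, not a symmetrization of the rightward conclusion. (Also note that the two cases $10^a10^{a+1}1$ and $10^{a+1}10^a1$ are exchanged by the mirror map, not by the involution $0\leftrightarrow 1$, which instead produces words in $(0(1^a+1^{a+1}))^{\Z}$.)
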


\begin{proof}
Assume that $10^{a+1}10^{a}1$ is a factor. 
By Lemmas \ref{main1} and \ref{main2},  unless  the right prolongation is 
$10^{a+1}1(0^{a}1)^{\infty}$, we find a word in $10^{a+1}1(0^a1)^+0^{a+1}1$ whose 
suffix is $10^{a}10^{a+1}1$ in the right extension. 
The same goes for $10^{a}10^{a+1}1$ that unless $10^{a}1(0^{a+1}1)^{\infty}$ is the right 
continuation,
we find a word in $10^a1(0^{a+1}1)^+0^a1$ having a suffix $10^{a+1}10^{a}1$ in the right. 
By iterating this discussion, the right extension must be 
in $(1(0^a+0^{a+1}))^{\N}$. The left direction is similar.
\end{proof}

Let $K$ be the set of elements $\x\in \A^{\Z}$ that have at most one occurrence of $0$'s or $1$'s, i.e., 
the set of words of the form $0^{\Z}$, $1^{\Z}$, $\dots 000 1 000 \dots$ or $\dots 111 0 111 \dots$. 
Clearly $K\subset \X_F$.  On the other hand,  it is easy to see that 
if $\x\in \A^{\Z}$ has exactly two occurrences of $0$'s or $1$'s, then $\x\not \in \X_F$. 
By Lemmas \ref{Diff} and \ref{main3}, any element $\x\in \X_F\setminus K$ must be in 
$(1(0^a+0^{a+1}))^{\Z}$ or $(0(1^a+1^{a+1}))^{\Z}$ with $a\ge 1$. 
Therefore for $x\in \X_F\setminus K$, we associate a bi-infinite sequence over two letters 
$a$ and $a+1$ by choosing an appropriate index\footnote{We select the origin of the word $\{a,a+1\}^\Z$
by the position where the origin of $\x$ sits in $10^a$ or $10^{a+1}$, say.}. 
Replacing $a$ to $0$ and $a+1$ to $1$, we define a map 
$\phi$ from $\X_F\setminus K$ to $\A^{\Z}$.\footnote{$(01)^{\infty}$ is the unique word in $\X_F\setminus K$ that is in $(1(0^1+0^{2}))^{\Z}$ and $(0(1^1+1^{2}))^{\Z}$. We put $\phi((01)^{\infty}):=0^{\infty}$.} 

\begin{prop}\label{InverseLimit}
The image of $\phi$ is contained in $\X_F$. 
\end{prop}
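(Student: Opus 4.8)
The plan is to show that $\phi$ is a \emph{self-similar renormalization}: if $\x \in \X_F\setminus K$ avoids all forbidden words in $F$, then $\phi(\x)$ also avoids $F$, and conversely the structure of $\phi(\x)$ is controlled by the structure of $\x$. The natural route is to work with the inverse (desubstitution) direction together with a substitution map. Concretely, fix $a\ge 1$ and define the morphism $\psi_a:\A^*\to\A^*$ by $\psi_a(0)=10^a$, $\psi_a(1)=10^{a+1}$. By Lemmas \ref{Diff} and \ref{main3}, every $\x\in\X_F\setminus K$ lies in $(1(0^a+0^{a+1}))^{\Z}$ for a unique $a$, so $\x=\psi_a(\y)$ for a unique (up to the index choice in the footnote) $\y\in\A^{\Z}$, and $\phi(\x)=\y$. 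So the claim ``$\phi(\x)\in\X_F$'' is the statement: \textbf{if $\psi_a(\y)$ avoids $F$, then $\y$ avoids $F$.} Equivalently, contrapositively, if $\y$ contains some forbidden word $0v01\tilde v1$ or $1\tilde v10v0$, then $\psi_a(\y)$ contains a forbidden word.

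The key steps, in order, are as follows. First I would record the basic block-decoding lemma: in any word of $(1(0^a+0^{a+1}))^{\Z}$, the occurrences of the letter $1$ mark the block boundaries unambiguously, so a factor of $\psi_a(\y)$ that begins and ends with $1$ reads off a unique factor of $\y$; this is where the already-proved Lemmas \ref{Diff}, \ref{main1}, \ref{main2}, \ref{main3} do the heavy lifting, since they guarantee the only admissible local patterns are $10^a$ and $10^{a+1}$ with the transition rules forcing the $(1(0^a+0^{a+1}))^{\Z}$ shape. Second, I would take a putative forbidden factor of $\y$, say $\y[i,j]=0v01\tilde v1$ with $v=v_1\cdots v_m$, and compute its $\psi_a$-image: $\psi_a(\y[i,j])$ is a word over $\{0,1\}$ whose run-length encoding is obtained from that of $0v01\tilde v1$ by the rule $0\mapsto$ (gap $a$), $1\mapsto$ (gap $a+1$). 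The point is that the \emph{palindromic} symmetry ``$v$ then mirror-$\tilde v$'' in the $\y$-word, combined with the central ``$01$'' versus the ``$1$'' pattern, transfers to an analogous mismatch in $\psi_a(\y[i,j])$: reading the image, one locates a subword of the form $0^a 10^{a}\,\text{(image of }v\text{)}\,10^{a+1}1$ paired against its mirror $1\,\text{(image of }\tilde v\text{)}\,10^{a}10^{a}$, and the asymmetry between the $0^a$-side and the $0^{a+1}$-side produces precisely a word in $F$ (of the form $0w'01\tilde{w'}1$ for the appropriate $w'$, after possibly prepending/appending the forced $0^a$'s supplied by the block structure). Third, I would handle the boundary effects: a forbidden factor of $\y$ need not start exactly at a block boundary of $\psi_a(\y)$, but since $\psi_a$ is a prefix code ($10^a, 10^{a+1}$ both start with $1$ and neither is a prefix of the other), every factor of $\psi_a(\y)$ extends canonically to a block-aligned factor, and the forced prolongations from Lemmas \ref{main1}–\ref{main2} supply the extra $0^a$ or $0^{a+1}$ needed to complete the forbidden pattern. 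The symmetric case $1\tilde v10v0$ and the involution $0\leftrightarrow1$ are identical by the symmetries of $\X_F$ noted before Lemma \ref{Diff}.

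I expect the main obstacle to be step two: bookkeeping the run-length transfer so that the mirror symmetry of $v\mapsto\tilde v$ is genuinely preserved under $\psi_a$ and lands \emph{exactly} on a word of the form $0w01\tilde w1$ rather than merely on ``something forbidden-looking''. The delicate point is that $\psi_a$ does not commute with the mirror map on the nose --- $\widetilde{\psi_a(v)}$ is not $\psi_a(\tilde v)$ but rather $\psi_a'(\tilde v)$ for the reversed morphism $0^a1,0^{a+1}1$ --- so one must track how the ``extra'' $1$ at one end and the central $01$ get redistributed, and verify that after absorbing the block-boundary $0$-runs the two sides match as true mirrors. Once this is pinned down for $n=0$ (the shortest forbidden words), the general case follows by the same computation with $v$ of arbitrary length, since the argument only uses the outermost two symbols and the palindromic pairing, not the internal structure of $v$; and the converse inclusion statement of the proposition is then immediate because $\phi$ is a bijection onto its image with inverse $\psi_a$, so $\phi(\X_F\setminus K)\subset\X_F$ is exactly what we have shown.
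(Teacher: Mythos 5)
Your proposal is correct and follows essentially the same route as the paper: a forbidden factor $0v01\tilde v1$ of $\phi(\x)$ lifts under the block substitution $0\mapsto 10^{a}$, $1\mapsto 10^{a+1}$ to a factor of $\x$ of the form $1\tilde u10u0\in F$, giving the contradiction. The boundary and prolongation worries in your third step are unnecessary, since the lifted word already spans whole blocks of $\x$ and is a genuine factor as it stands, which is why the paper's proof is a one-line computation.
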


\begin{proof}
Assume that a  word $0v01\tilde{v}1\in F$ appeared as 
a factor of $\phi(\x)$ with $\x\in \X_F\setminus K$. Then there is a factor
$$
10^a10^{a_1}\dots 10^{a_{\ell}} 10^a {\bf 10} 0^{a}1 0^{a_{\ell}}1 \dots 0^{a_1}1 0^{a+1}
$$
of $\x$ with $a_i\in \{a, a+1\}$. However it is a forbidden word, giving a contradiction.
The case of $1\tilde{v}10v0$ is similar.
\end{proof}

An element $\x=(x_i)_{i\in \Z} \in \A^{\Z}$ is positively (resp. negatively) eventually periodic 
if $(x_i)_{i\in \N}$ (resp. $(x_{-i}))_{i\in \N}$) is eventually periodic.
We say that $\x\in \A^{\Z}$ is eventually periodic if it is positively or 
negatively eventually periodic. 
However this distinction does not exist for the elements in $\X_F$

\begin{lem}\label{Periods}
If $\x\in \X_F$ is positively eventually periodic then it is negatively eventually periodic and vice versa.
\end{lem}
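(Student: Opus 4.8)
The plan is to use the structural description of $\X_F$ obtained so far. If $\x\in K$ the statement is trivial, so assume $\x\in\X_F\setminus K$; by Lemmas \ref{Diff} and \ref{main3}, $\x$ (after possibly applying the involution $0\leftrightarrow 1$) lies in $(1(0^a+0^{a+1}))^{\Z}$ for some $a\ge 1$, and the coding map $\phi$ sends $\x$ to a word $\phi(\x)\in\A^{\Z}$, which by Proposition \ref{InverseLimit} again lies in $\X_F$. The key observation is that $\x$ is positively (resp. negatively) eventually periodic if and only if $\phi(\x)$ is, because the block decomposition into $10^a$ and $10^{a+1}$ is a bijective recoding that changes lengths only by a bounded multiplicative factor; a purely periodic tail of $\x$ corresponds to a purely periodic tail of $\phi(\x)$ and conversely. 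So it suffices to prove the statement for $\phi(\x)$ in place of $\x$, and we may iterate $\phi$.

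The first main step is to show that this iteration must terminate, i.e.\ after finitely many applications of $\phi$ (combined where necessary with the involution) we reach an element of $K$. This is where Lemma \ref{main3} does the real work: if $\x\in\X_F\setminus K$ contains any factor $10^a10^{a+1}1$ or $10^{a+1}10^a1$ (both letters $0^a$ and $0^{a+1}$ genuinely occurring), then $\x\in(1(0^a+0^{a+1}))^{\Z}$ and $\phi$ is defined; but if $\x\in(1(0^a+0^{a+1}))^{\Z}$ uses only the block $10^a$ (or only $10^{a+1}$) then $\x=(10^a)^{\Z}$ is already purely periodic and the lemma holds outright. Thus at each stage either we are done, or $\phi(\x)$ is strictly "simpler". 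To make "simpler" precise I would track the total number of distinct block-lengths appearing, or equivalently run the descent and observe that a word in $\X_F\setminus K$ which is \emph{not} purely periodic must have $\phi(\x)$ again aperiodic and using two block sizes; since the alphabet stays $\{0,1\}$ the recoding cannot descend forever unless the word was eventually periodic to begin with. The cleanest formulation: if $\x\in\X_F$ is positively eventually periodic, then so is $\phi(\x)$, and by iterating and using that eventual periodicity of $\phi(\x)$ pulls back to eventual periodicity of $\x$ on \emph{both} sides once one reaches a purely periodic word in $K$ (which is automatically two-sided periodic), we conclude.

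The second step handles the base case and the pull-back direction carefully: an element of $K$ is either $0^{\Z}$, $1^{\Z}$, or a single defect $\dots000100\dots$, all of which are trivially both positively and negatively eventually periodic (indeed the single-defect word is eventually $0^{\N}$ to the right and $0^{\N}$... to the left). Then one propagates back up the chain $\x\mapsto\phi(\x)\mapsto\phi^2(\x)\mapsto\cdots$: at each level, eventual periodicity on one side of $\phi^{k+1}(\x)$ forces eventual periodicity on one side of $\phi^k(\x)$, but once we are at a purely periodic word both sides are periodic, so climbing back up, each $\phi^k(\x)$ inherits two-sided eventual periodicity, and finally $\x$ does too. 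The main obstacle I anticipate is making the descent argument airtight — specifically, verifying that an aperiodic word in $\X_F\setminus K$ necessarily triggers a \emph{further} nontrivial application of $\phi$ (rather than stabilizing), which amounts to ruling out that it is already of the form $(10^a1 0^{a+1})^{\Z}$-type periodic patterns; this should follow from Lemmas \ref{main1} and \ref{main2}, which show the allowed right- and left-extensions are forced, so any non-periodic $\x$ must keep mixing the two block sizes in a way that the recoding faithfully records.
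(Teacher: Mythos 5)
Your overall strategy is the same as the paper's: dispose of $K$ trivially, descend through iterates of $\phi$ until you land in $K$, and pull eventual periodicity back up the chain. The pull-back step and the base case are fine as you describe them. The genuine gap is in the termination of the descent. Neither of the measures you propose works: the ``total number of distinct block-lengths appearing'' is always $1$ or $2$ and does not decrease, and ``the alphabet stays $\{0,1\}$'' gives no decreasing quantity at all; moreover the sentence ``the recoding cannot descend forever unless the word was eventually periodic to begin with'' is circular, since positive eventual periodicity is precisely the hypothesis you are working under, and by itself it does not prevent the iteration $\x\mapsto\phi(\x)\mapsto\phi^2(\x)\mapsto\cdots$ from running forever without ever reaching $K$.

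The correct decreasing quantity --- and the one the paper uses --- is the length of a period of the positive tail. If $\x\in\X_F\setminus K$ is positively eventually periodic, write the period of $(x_i)_{i\in\N}$ in block form as $10^{a_1}10^{a_2}\dots10^{a_\ell}$ with $a_i\in\{a,a+1\}$; then $\phi(\x)$ has a positive period of length $\ell$, whereas the original period has length $\sum_{i}(a_i+1)\ge 2\ell>\ell$ because each block has length at least $2$. So the minimal positive period length strictly decreases under $\phi$, and after finitely many steps $\phi^n(\x)$ has a positive period of length one, i.e.\ its positive tail is $0^{\N}$ or $1^{\N}$; by Lemma \ref{Diff} (or Lemma \ref{main2}) such a word in $\X_F$ can contain at most one occurrence of the other letter, hence lies in $K$ and is negatively eventually periodic, after which your pull-back argument applies. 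With this one observation inserted, your proof closes and coincides with the paper's.
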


\begin{proof}
If $\x=(x_i)\in K$, the statement is trivial.
If not, by switching $0$ and $1$ if necessary, 
we may assume that $\x\in (1(0^a+0^{a+1}))^{\Z}$ with $a\ge 1$
and the period in $(x_i)_{i\in \N}$  is of the form $10^{a_1}10^{a_2}\dots 10^{a_{\ell}}$ with $a_i\in \{a, a+1\}$. 
Applying $\phi$ the period length $\ell$ becomes strictly shorter.  
Therefore eventually we find $n\in \N$ that $\phi^n(\x)$ has a period of length one in positive direction. 
In view of Lemma \ref{main2}, we see $\phi^n(\x)\in K$ which is negatively eventually periodic.
Considering the inverse image $\phi^{-n}(\y)$ for $\y\in K$, we obtain the result.
The converse direction is shown in a similar manner.
\end{proof}

Later we will see that Lemma \ref{Periods} corresponds to Lemma 6.2.2 in \cite{Fogg:02}.
The same proof shows that for every
eventually periodic element $\x\in \X_F$, there is $n\in \N$ that $\phi^n(\x)\in K$.

A word $\x\in \A^* \cup \A^{\N} \cup \A^{\Z}$ is {\bf balanced} if $||u|_1-|v|_1|\le 1$ holds 
for all $u,v\prec \x$ with $|u|=|v|$, where $|u|_1$ denotes the number of occurrences of $1$ in $u$.
An infinite word $\y:=(y_m)\in \A^{\N}$ is {\bf eventually balanced}, if there exist
$m_0\in \N$ and a function $q:\N \to \N$ with $\lim_{m\to \infty} q(m)=\infty$ 
so that
$$ \y[\max\{1,m-q(m)\},m+q(m)]
$$
is balanced for all $m\ge m_0$. 

\begin{lem}
\label{Unbalance}
If a word $\x\in \A^* \cup \A^{\N} \cup \A^{\Z}$ is not balanced then there exist a palindrome $w\in \A^*$ 
such that $0w0$ and $1w1$ are the factors of $\x$.
\end{lem}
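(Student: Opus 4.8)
The plan is to prove the contrapositive-flavored statement directly: if $\x$ is not balanced, I will exhibit a palindrome $w$ with $0w0 \prec \x$ and $1w1 \prec \x$. First I would use the failure of balancedness to extract a witness of minimal length. By definition, there are factors $u, v \prec \x$ with $|u| = |v|$ and $|u|_1 - |v|_1 \ge 2$. Among all such pairs choose one with $|u| = |v|$ minimal; call this common length $n$. Minimality forces the two ends to disagree in a controlled way: $u$ and $v$ cannot share a common first letter (else delete it from both to get a shorter witness — note deleting the same letter changes both counts equally, so the difference $|u|_1 - |v|_1$ is preserved), and likewise they cannot share a common last letter. Combined with $|u|_1 > |v|_1$, this means $u$ begins and ends with $1$ while $v$ begins and ends with $0$. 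Moreover the difference must be exactly $2$: if it were $\ge 3$, stripping the first letter of $u$ (a $1$) and the first letter of $v$ (a $0$) leaves a shorter witness with difference $\ge 2$, contradicting minimality. So we may write $u = 1u'1$ and $v = 0v'0$ with $|u'| = |v'| = n-2$ and $|u'|_1 - |v'|_1 = 0$, i.e. $|u'|_1 = |v'|_1$.

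Next I would run the same minimality argument one level down, on the pair $(u', v')$, to force these to be mirror images of each other. The key observation is that although $u'$ and $v'$ have equal numbers of $1$'s, they must \emph{differ} as words (otherwise $u = 1u'1$ and $v = 0u'0$ would give, by comparing the two factors $1u'$ and $u'0$ of length $n-1$, a shorter unbalanced witness, since $|1u'|_1 - |u'0|_1 = 1$... actually I need to be a little careful here and instead directly compare prefixes/suffixes). The cleaner route: I claim $u'$ and $v'$ are reverses of one another. Suppose not. Then at the first position where $u'$ and $v'$ differ, one has a $1$ and the other a $0$; truncating both $u, v$ just after (or before) that position produces factors of $\x$ of equal length whose $1$-count difference is still $\ge 2$ but of length $< n$ — the precise bookkeeping is that the surrounding $1\cdots 1$ versus $0\cdots 0$ framing contributes a $+2$ that cannot be fully cancelled by a shorter common portion. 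Iterating this trimming from both ends, the only way to avoid contradicting minimality of $n$ is that $u'$ and $v'$ agree after reversal, i.e. $v' = \widetilde{u'}$.

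Finally, with $v' = \widetilde{u'}$ and $|u'|_1 = |v'|_1$ automatic, I would symmetrize to get a \emph{palindrome}. Set $w := u'$ if $u'$ is already a palindrome; in general $u'$ need not be, so instead I would argue that one can further shrink the witness unless $u'$ itself is a palindrome — if $u'$ has first letter $\ne$ last letter, one more application of the stripping argument (removing the first letter of $u$ and last letter of $v$, or symmetric variants) yields a shorter unbalanced witness. Hence by minimality $u'$ is a palindrome, $v' = \widetilde{u'} = u'$, and we have exactly $0w0 = v \prec \x$ and $1w1 = u \prec \x$ with $w = u'$ a palindrome, as required. This also covers the degenerate case $n = 2$, where $w = \lambda$ (the empty word, which is a palindrome), giving $00 \prec \x$ and $11 \prec \x$.

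The main obstacle I anticipate is the careful combinatorial bookkeeping in the middle step: one must track how the $1$-count difference evolves under simultaneously trimming letters from $u$ and $v$ (which may be different letters, hence changing the difference by $0$, $\pm 1$, or $\pm 2$ per step) and verify that \emph{no} trimming strategy can both reduce the length below $n$ and keep the difference $\ge 2$ unless $u'$ is a palindrome and $v' = \widetilde{u'}$. Organizing this as a clean induction on $n$ with a sharp invariant — rather than an ad hoc case analysis — is where the real work lies; everything else is immediate from the definition of balancedness and minimal choice of the witness.
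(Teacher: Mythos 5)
Your overall strategy (take an unbalanced pair $u,v\prec\x$ with $|u|=|v|$ minimal and strip/compare letters) is exactly the paper's: the paper's proof is the one line ``take a minimal pair; then $u,v$ must have the desired form,'' citing Lothaire, Proposition 2.1.3, so you are in effect trying to supply the proof of that cited proposition. Your first step is correct and complete: minimality forces $u=1u'1$, $v=0v'0$ with $|u|_1-|v|_1$ exactly $2$, hence $|u'|_1=|v'|_1$. But the two remaining steps have genuine gaps. In the middle step, your claim that truncating $u$ and $v$ at the first position $i$ where $u'$ and $v'$ differ ``produces factors of equal length whose $1$-count difference is still $\ge 2$'' is false for the truncated prefixes alone: the prefix pair $(1u'_1\cdots u'_i,\ 0v'_1\cdots v'_i)$ has $1$-count difference equal to $0$ in the case $u'_i=0$, $v'_i=1$, and then no contradiction arises from the prefixes. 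The missing device is to pass to the \emph{complementary} suffixes: the prefix difference and the suffix difference sum to $2$, both parts are shorter than $n$ for a proper split, so minimality forces each to equal $1$ exactly, which is impossible at an index where the letters disagree. This is precisely the ``bookkeeping'' you defer, and without it the step does not close. Note also that comparing $u'$ and $v'$ positionwise, as you describe, proves $u'=v'$, not $v'=\widetilde{u'}$; to get the mirror statement you must compare $u'_i$ against $v'_{m+1-i}$ (where $m=|u'|$) and split $u$ into a prefix and $v$ into a suffix.

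The final step is also incorrect as written. Checking whether $u'$ has equal first and last letters does not detect non-palindromes (e.g.\ $0100$ passes that test), and the stripping you propose (first letter of $u$, a $1$, together with last letter of $v$, a $0$) drops the $1$-count difference from $2$ to $1$, so it does not yield a shorter unbalanced witness at all. The correct argument takes the first index $i$ with $u'_i\ne u'_{m+1-i}$ and compares the length-$(i+1)$ prefix of one of $u,v$ with the length-$(i+1)$ suffix of the other; in either case of the disagreement one of these comparisons gives a shorter pair with difference $2$, contradicting minimality. Alternatively, if you establish both $u'=v'$ and $v'=\widetilde{u'}$ by the complementary-split argument, then $u'=\widetilde{u'}$ is automatic and no separate palindrome step is needed. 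As it stands the proposal identifies the right witness and the right normal form but lacks the one combinatorial idea (the complementary split) that makes every remaining reduction work.
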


\begin{proof}
Take $u,v \prec \x$ with $|u|=|v|$,  $|v|_1-|u|_1\ge 2$ and $|u|$ being minimum. 
Then $u,v$ must have the desired form (see \cite[Proposition 2.1.3]{Lothaire:02}).
\end{proof}

\begin{theorem}\label{Balanced}
$\X_F$ is the set of balanced words in $\A^{\Z}$.
\end{theorem}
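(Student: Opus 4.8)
The plan is to prove the two inclusions separately, using Lemma~\ref{Unbalance} for one direction and the structural lemmas (Lemmas~\ref{Diff}, \ref{main1}, \ref{main2}, \ref{main3}) together with Proposition~\ref{InverseLimit} for the other.

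First, suppose $\x\in\A^{\Z}$ is balanced; I want to show $\x\in\X_F$, i.e. no word of $F$ occurs in $\x$. Suppose instead that $0v01\tilde v1\prec\x$ for some $v\in\A^*$. I would examine the two factors $0v0$ (occurring at one location) and $1\tilde v1$ (occurring just after, shifted by $|v|+2$). These have equal length $|v|+2$, and $|1\tilde v1|_1 - |0v0|_1 = |\tilde v|_1+2 - |v|_1 = 2$ since $|\tilde v|_1=|v|_1$. Hence $\x$ is not balanced, a contradiction; the case $1\tilde v10v0$ is symmetric (or follows by the involution). So every balanced bi-infinite word lies in $\X_F$.

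For the converse, take $\x\in\X_F$ and suppose $\x$ is not balanced. By Lemma~\ref{Unbalance} there is a palindrome $w$ with $0w0\prec\x$ and $1w1\prec\x$. The aim is to derive a contradiction by showing $\x$ must contain one of the forbidden patterns, or must be forced into a periodic form incompatible with carrying both $0w0$ and $1w1$. If $\x\in K$ (at most one $0$ or at most one $1$) it is trivially balanced, so $\x\in\X_F\setminus K$; by Lemmas~\ref{Diff} and \ref{main3} it lies in $(1(0^a+0^{a+1}))^{\Z}$ or $(0(1^a+1^{a+1}))^{\Z}$ for some $a\ge 1$. The key move is to apply $\phi$: by Proposition~\ref{InverseLimit}, $\phi(\x)\in\X_F$, and imbalance of $\x$ should descend to imbalance of $\phi(\x)$ — more precisely, if $\x$ carries both a "$0w0$" and a "$1w1$" block, then after recoding the gap-lengths the image $\phi(\x)$ should again carry an analogous obstruction (a palindrome $w'$ with $0w'0,1w'1\prec\phi(\x)$), with a strictly smaller measure of complexity (e.g. the length of $w$, or the alphabet spread $a$). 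Iterating $\phi$ then lands us in $K$, which is balanced — contradiction. One has to be careful that Lemmas~\ref{main1} and \ref{main2} are exactly what force the left/right continuations of the relevant blocks in $\x$, so that the recoding is well-defined and the obstruction genuinely propagates down.

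The main obstacle I anticipate is the descent step: showing precisely that an imbalance in $\x\in\X_F\setminus K$ forces an imbalance in $\phi(\x)$, with a strictly decreasing parameter. One must identify the right invariant to induct on — plausibly the minimal $|u|$ witnessing imbalance, or equivalently the length of the palindrome $w$ from Lemma~\ref{Unbalance} — and check that passing through $\phi$ shortens it. A clean way is to argue contrapositively and structurally: if $\phi(\x)$ were balanced (hence Sturmian or periodic of the expected type by the classical theory, cf.\ Lemma~\ref{Periods} and \cite{Lothaire:02}), then pulling back under $\phi^{-1}$ via the substitution $0\mapsto 10^a$, $1\mapsto 10^{a+1}$ preserves balancedness — this is a standard fact about Sturmian morphisms — so $\x$ itself would be balanced, contradiction. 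Handling the eventually periodic case needs Lemma~\ref{Periods} to know there is $n$ with $\phi^n(\x)\in K$. Once the descent is set up correctly the rest is bookkeeping, so I would budget most of the effort on making the recoding/imbalance-propagation airtight.
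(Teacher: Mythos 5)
Your proposal is correct and follows essentially the same route as the paper: the forbidden words of $F$ are themselves unbalanced (giving the inclusion of balanced words into $\X_F$), and for the converse one takes the palindrome $w$ from Lemma~\ref{Unbalance} and runs a descent through $\phi$ with $|w|$ as the decreasing parameter. The descent step you flag as the main obstacle is exactly the paper's one-line computation: after ruling out $|w|\le 2$ and $w\in 0^+\cup 1^+$ via Lemma~\ref{main3}, the palindrome has the form $0^a1\cdots 10^a$, so $0w0=0^{a+1}1\cdots 10^{a+1}$ and $1w1=10^a1\cdots 10^a1$ recode under $\phi$ to $1w'1$ and $0w'0$ with $|w'|<|w|$.
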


\begin{proof}
Since every element of $F$ is not balanced,  all balanced words are in $\X_F$.  Clearly the element in $K$ is
balanced. Let $\x\in \X_F\setminus K$ and assume that there is a palindrome $w$ that
 $0w0$ and $1w1$ are the factors of $\x$.  If $|w|=0$, then $00$ and $11$ are the factors of $\x$, which already 
 contradicts Lemma \ref{main3}, i.e., $\x$ can not be in $(1(0^a+0^{a+1}))^{\Z}$ or
 $(0(1^a+1^{a+1}))^{\Z}$.   If $w\in 0^+$ or $w\in 1^+$, then we get the same contradiction.
Since $w$ is a palindrome, $|w|\le 2$ is impossible.
Assume that $|w|\ge 3$ and $w\not \in 0^+$ and $w\not \in 1^+$. Without loss of generality, we may assume
that $w$ is of the shape $0^{a}1 \dots 10^{a}$. Thus, since 
$10^{a}1 \dots 10^{a}1$ and $0^{a+1}1\dots 10^{a+1}$ are factors of $\x$, 
there is a palindrome
$w'$ such that $0w'0$ and $1w'1$ are factors of $\phi(\x)$ and $|w'|<|w|$.  Iterating this we reach a
contradiction in finite steps.
\end{proof}


For $\Y_F$, the story goes in a similar but a little more intricate way. 
By abuse of notation, let $K$ be the set of elements $\y\in \A^{\N}$ 
that have at most one occurrence of $0$'s or $1$'s, i.e., 
the set of words of the form $0^{\N}$, $1^{\N}$, $0^* 1 0^{\N}$ or $1^*01^{\N}$. 
Clearly $K\subset \Y_F$. Suppose $\y\in \A^{\N}$ has a suffix $0^{\N}$. If $\y$ has more than 
one occurrences of $1$, then $\y$ has a forbidden factor $10^n10^{n+2}$ and it is not in $\Y_F$.
Thus if $\y\in \Y_F$ has  a suffix $0^{\N}$ or $1^{\N}$, then it must be in $K$.
For a $\y\in \Y_F\setminus K$, considering the involution $0\mapsto 1,1\mapsto0$, we 
may assume that $\y=0^{\ell}10^{a_1}10^{a_2}\dots$ and $a_i\ge 1, a_{i+1}\ge 1$ for infinitely many $i$'s.
Assume that there are infinitely many indices $i$ that $a_i\neq a_{i+1}$.
Following the proof of Lemma \ref{main3}, 
we find an infinite sequence of prefixes of $\y$ of the form
$w_i\in 0^* (1(0^{b_i}+0^{b_i+1}))^{+}$ with $b_i=\min\{a_i,a_{i+1}\}$
where $|w_i|$ is the index of the last $0$ of $0^{a_{i+1}}$.
Clearly $b_i$ does not depend on the index $i$
and $\y \in 0^*(1(0^a+0^{a+1}))^{\N}$. 
Observing the exponents of $0$ in $\y$, we obtain a sequence over $a$ and $a+1$.
Substituting $a$ by $0$, $a+1$ by $1$, we see the map $\phi:\Y_F \setminus K \rightarrow \A^{\N}$ 
is defined in a similar manner.
Consider the case that there is $m\in \N$ satisfying $a_i=a_{i+1}$ for $i\ge m$. If there exists an index $i<m$
that $a_i\neq a_{i+1}$, then take the largest $i=i_0$ with this property. 
Let $a=\min\{a_{i_0},a_{i_0+1}\}$. By Lemmas \ref{main1} and \ref{main2}, we have
$\y\in 0^{*}(10^a)^+10^{a+1}(10^a)^{\N}$ or
$\y\in 0^{*}(10^{a+1})^+10^{a}(10^{a+1})^{\N}$. Thus in all cases, $\y \in 0^*(1(0^a+0^{a+1}))^{\N}$
and we can define $\phi(\y)$. After all we obtain in the same way:

\begin{prop}\label{InverseLimit2}
$\phi$ is a map from $\Y_F\setminus K$ to $\Y_F$.
\end{prop}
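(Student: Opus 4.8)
The plan is to run exactly the argument of Proposition \ref{InverseLimit}, adapted to right-infinite words. The discussion preceding the statement already establishes that every $\y\in\Y_F\setminus K$ can be written as $\y=0^{\ell}10^{a_1}10^{a_2}\cdots$ with $a_i\in\{a,a+1\}$ for a fixed $a\ge1$ and infinitely many $1$'s, and that $\phi(\y)$ is the word over $\A$ obtained by reading off the exponents $(a_i)_{i\ge1}$ and performing the substitution $a\mapsto0$, $a+1\mapsto1$; hence $\phi(\y)\in\A^{\N}$ is well defined, and the only thing left to prove is that $\phi(\y)$ has no factor belonging to $F$.

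First I would argue by contradiction: suppose a word $0v01\widetilde{v}1\in F$ with $|v|=k$ occurs as a factor $\phi(\y)[j,j+2k+3]$ (the factor $1\widetilde{v}10v0$ being treated in the same way). Replacing each of the $2k+4$ letters $\phi(\y)[j],\dots,\phi(\y)[j+2k+3]$ by the corresponding block $10^{a_i}\in\{10^a,10^{a+1}\}$ of $\y$, I would obtain that
\[
10^a10^{a_1}\cdots10^{a_k}\,10^a\,{\bf 10}\,0^{a}1\,0^{a_k}1\cdots0^{a_1}1\,0^{a+1}
\]
is a factor of $\y$: all $2k+4$ blocks involved occur in $\y$ because $\y$ has infinitely many $1$'s, and the leading block $10^a$ needs nothing to its left. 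But, exactly as in the proof of Proposition \ref{InverseLimit}, this displayed word lies in $F$ --- it equals $1\widetilde{w}10w0$ with $w=0^a10^{a_k}1\cdots10^{a_1}10^a$ --- which contradicts $\y\in\Y_F$. Hence $\phi(\y)\in\Y_F$.

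The one place where the present situation is not literally the two-sided one is the left end of the lifted factor: when $j=1$ the leading block $10^{a}$ sits immediately after the prefix $0^{\ell}$ rather than after another block, so I must check that it still occurs inside $\y$ --- it does, being the first block after $0^{\ell}$ --- and, more generally, that membership in $\Y_F$ is a condition on factors alone, so that finding the displayed word anywhere inside $\y$, however close to its start, already contradicts $\y\in\Y_F$. I expect this to be the only genuinely new point, and it is a triviality; all the real work has already been done by Lemmas \ref{Diff}, \ref{main1}, \ref{main2}, \ref{main3} and by the structural analysis of $\Y_F\setminus K$ carried out just before this proposition, so beyond this boundary remark the proof is word-for-word that of Proposition \ref{InverseLimit}.
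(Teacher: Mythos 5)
Your proposal is correct and follows essentially the same route as the paper, which itself proves Proposition \ref{InverseLimit2} simply by observing that the argument of Proposition \ref{InverseLimit} (lifting a hypothetical forbidden factor of $\phi(\y)$ to a forbidden factor of $\y$ of the form $1\tilde{w}10w0$) carries over verbatim, the well-definedness of $\phi$ on $\Y_F\setminus K$ having been established in the preceding paragraph. Your extra remark about the left boundary (the prefix $0^{\ell}$) is a correct but minor observation that the paper leaves implicit.
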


And once this map is defined, we obtain

\begin{theorem}\label{Balanced2}
$\Y_F$ is the set of balanced words in $\A^{\N}$
\end{theorem}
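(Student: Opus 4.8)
The plan is to mirror the proof of Theorem~\ref{Balanced}, now working in $\A^{\N}$ and using the one-sided version of all the machinery developed above (Lemmas~\ref{Diff}, \ref{main1}, \ref{main2}, \ref{main3}, the set $K$ in the one-sided sense, the map $\phi:\Y_F\setminus K\to\Y_F$ from Proposition~\ref{InverseLimit2}, and the unbalance-criterion Lemma~\ref{Unbalance}). First I would dispatch the easy inclusion: every word in $F$ is not balanced, so every balanced word in $\A^{\N}$ lies in $\Y_F$. For the converse, take $\y\in\Y_F$ and suppose for contradiction that $\y$ is not balanced; by Lemma~\ref{Unbalance} there is a palindrome $w\in\A^*$ with $0w0\prec\y$ and $1w1\prec\y$.

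The core is a descent on $|w|$, exactly as in Theorem~\ref{Balanced}. If $\y\in K$, then $\y$ is visibly balanced (each of $0^{\N},1^{\N},0^*10^{\N},1^*01^{\N}$ has at most one ``extra'' symbol), contradiction; so assume $\y\in\Y_F\setminus K$. If $|w|=0$ then $00\prec\y$ and $11\prec\y$, contradicting Lemma~\ref{main3} (which forces $\y\in 0^*(1(0^a+0^{a+1}))^{\N}$ or the symmetric shape, neither of which contains both $00$ and $11$); the same contradiction occurs if $w\in 0^+$ or $w\in 1^+$, and since $w$ is a palindrome the cases $|w|=1,2$ reduce to these. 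So assume $|w|\ge 3$, $w\notin 0^+$, $w\notin 1^+$. Up to the involution $0\leftrightarrow 1$ we may take $w=0^a1\cdots 10^a$ with $a\ge 1$; then $10^a1\cdots 10^a1\prec\y$ and $0^{a+1}1\cdots 10^{a+1}\prec\y$, so applying $\phi$ (legitimate because $\y\in\Y_F\setminus K$, Proposition~\ref{InverseLimit2}) produces a strictly shorter palindrome $w'$ with $0w'0\prec\phi(\y)$ and $1w'1\prec\phi(\y)$, and $\phi(\y)\in\Y_F$. Iterating lands us at a palindrome of length $<3$, which we have already excluded — contradiction.

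The one genuine subtlety, and the step I expect to need the most care, is checking that the descent argument is actually valid in the one-sided setting: in $\A^{\Z}$ the factor $0w0$ sits inside a genuinely bi-infinite word, whereas in $\A^{\N}$ one must make sure that the two occurrences witnessing non-balance lie far enough to the right that the prolongation lemmas (Lemmas~\ref{main1}, \ref{main2}) apply and that the ``extra $0$ prepended'' really exists inside $\y$ rather than running off the left end. Because any occurrence of $0w0$ or $1w1$ with $w$ long is automatically preceded by a block $10^a$ or $10^{a+1}$ coming from the structure $\y\in 0^*(1(0^a+0^{a+1}))^{\N}$ established before Proposition~\ref{InverseLimit2}, the left prolongation we use is already present, so this is a matter of bookkeeping rather than a new idea. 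Once that is verified, the descent closes exactly as in Theorem~\ref{Balanced}, and the proof is complete.
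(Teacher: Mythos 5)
Your overall route is exactly the paper's: once $\phi$ is defined on $\Y_F\setminus K$ (Proposition \ref{InverseLimit2}), the paper proves Theorem \ref{Balanced2} literally ``by the same proof'' as Theorem \ref{Balanced}, i.e.\ the descent on the palindrome witness of Lemma \ref{Unbalance}. You also correctly isolate the only point where the one-sided setting differs. But your resolution of that point contains a false claim, and this is where the gap sits. You assert that any occurrence of $0w0$ is ``automatically preceded by a block $10^a$ or $10^{a+1}$''; this fails precisely when the occurrence of $0w0=0^{a+1}1\cdots 10^{a+1}$ overlaps the initial block $0^{\ell}$ of $\y=0^{\ell}10^{b_1}10^{b_2}\cdots$ (which requires only $\ell\ge a+1$). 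In that case the leading run $0^{a+1}$ of the occurrence lies in the prefix, is not preceded by a $1$, and contributes no letter to $\phi(\y)$; the occurrence therefore yields only $w'1$ as a \emph{prefix} of $\phi(\y)$, not the factor $1w'1$. If moreover this is the \emph{only} occurrence of $0w0$ in $\y$, the descent produces $0w'0\prec\phi(\y)$ (from $1w1$, whose runs are all complete since it begins and ends with $1$) but not $1w'1\prec\phi(\y)$, and the induction does not close. A concrete instance with $a=1$, $w=010$: a word of the form $\y=0010010\cdots$ whose run sequence after the prefix contains no two consecutive $2$'s has $00100$ occurring only at positions $1$--$5$; for such $\y$ the image $\phi(\y)$ begins with $1$ and need not contain $11$ at all, so nothing unbalanced is transported downward.

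The theorem is still true, but closing this case needs an argument that is not mere bookkeeping. What actually saves it is that the prefix occurrence $0w0$ is followed in $\y$ by a $1$, so $\y$ begins (after $0^{\ell-a-1}$) with $0w01\cdots$; the forbidden word $0w01\tilde{w}1=0w01w1$ then forces the continuation after this $1$ to deviate from $w0^{\ell-a}$ only at a letter where $w0^{\ell-a}$ carries a $1$ and $\y$ carries a $0$ (or to agree with it until the left context runs off the edge). Working this out (e.g.\ in the $w=010$ example it forces the run sequence into $2,1,2,1,2,\dots$, i.e.\ $\y=0(01001)^{\N}$) shows that the assumed second witness $1w1$ cannot in fact occur, or a forbidden factor appears at a nearby center. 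Some argument of this kind — or, alternatively, a proof that every element of $\Y_F$ is a suffix of an element of $\X_F$, which the paper only asserts as a consequence and which is not automatic for a subshift given by forbidden words — must be supplied before the descent is complete. As written, your proof (like the paper's one-line reduction) leaves this boundary case open.
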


\noindent
by the same proof. It is easy to see from the proof
that the projection
$(x_i)_{i\in \Z}\mapsto (x_i)_{i\in \N}$ gives a surjective map from $\X_F$ to $\Y_F$.

\begin{cor}\label{Unavoidable}
If $\y\in \A^{\Z} \cup \A^{\N}$ is not balanced, then there is a word $v\in \A^*$ that
$0v01\tilde{v}1$ or $1v10\tilde{v}0$ is a factor of $\y$.
\end{cor}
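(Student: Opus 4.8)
The plan is to recover the statement directly from Theorems \ref{Balanced} and \ref{Balanced2} together with Lemma \ref{Unbalance}. Suppose $\y\in \A^{\Z}\cup\A^{\N}$ is not balanced. By Theorem \ref{Balanced} (if $\y\in\A^{\Z}$) or Theorem \ref{Balanced2} (if $\y\in\A^{\N}$), $\y\notin\X_F$ (resp. $\y\notin\Y_F$); but this only tells us that \emph{some} word from $F$ is a factor of $\y$, and $F=\{0v01\tilde v1,\ 1\tilde v10v0\mid v\in\A^*\}$ already has exactly the stated shape once one notes $1\tilde v10v0$ becomes $1w10\tilde w0$ after renaming $w=\tilde v$ (the class $\{1\tilde v10v0\}$ equals the class $\{1w10\tilde w0\}$). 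So in principle the corollary is almost a tautological restatement — the only subtlety is that Theorems \ref{Balanced}/\ref{Balanced2} are stated as characterizations of $\X_F$, $\Y_F$, and one wants to make the logical extraction explicit.

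First I would record the contrapositive-free version: $\y$ not balanced $\iff \y\notin\X_F$ (resp. $\notin\Y_F$) $\iff$ some element of $F$ is a factor of $\y$, and then observe that membership of a word from $F$ as a factor of $\y$ is precisely the assertion that $0v01\tilde v1$ or $1v10\tilde v0$ is a factor for some $v\in\A^*$ — here I would point out that $\{1\tilde v10v0:v\in\A^*\}=\{1v10\tilde v0:v\in\A^*\}$ since $v\mapsto\tilde v$ is an involution on $\A^*$ and mirroring is length-preserving. That closes the argument. Alternatively, one can give a self-contained route that bypasses $\X_F$ entirely: apply Lemma \ref{Unbalance} to get a palindrome $w$ with $0w0,1w1\prec\y$, and then run the reduction already performed inside the proof of Theorem \ref{Balanced} — if $|w|\le 2$ or $w\in 0^+\cup 1^+$ one exhibits an explicit forbidden word of the required form, and if $|w|\ge 3$ with mixed letters, writing $w=0^a1\cdots 10^a$ produces the factor $10^a1\cdots 10^a1$ alongside $0^{a+1}1\cdots 10^{a+1}$, which directly contains a word $0v01\tilde v1$ or $1v10\tilde v0$ (one takes $v$ to be the appropriate central block); no induction is even needed since we only want existence of one forbidden factor, not the full balancedness conclusion.

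I expect no real obstacle here: the content is entirely in Theorems \ref{Balanced} and \ref{Balanced2}, which are already proved, and the corollary is their logical repackaging plus the harmless reindexing $w\leftrightarrow\tilde v$. The one point to be careful about is handling both ambient spaces $\A^{\Z}$ and $\A^{\N}$ uniformly — but since Theorem \ref{Balanced2} is the $\A^{\N}$-analogue of Theorem \ref{Balanced} and the set $F$ is the same in both, the two cases are verbatim identical, so I would simply say "by Theorem \ref{Balanced} or \ref{Balanced2} according as $\y\in\A^{\Z}$ or $\y\in\A^{\N}$." If a fully self-contained proof is preferred, the mildly delicate step is checking that the mixed-letter case $|w|\ge 3$ genuinely yields a factor matching the exact template $0v01\tilde v1$ (rather than merely "some unbalanced factor"), which amounts to reading off $v$ from the palindromic structure of $w$ — a short bookkeeping check already implicit in the Theorem \ref{Balanced} argument.
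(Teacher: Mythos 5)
Your primary route is exactly the paper's: Corollary \ref{Unavoidable} is stated there as an immediate consequence of Theorems \ref{Balanced} and \ref{Balanced2} (a word outside $\X_F$ or $\Y_F$ contains, by definition of a subshift given by forbidden words, a factor from $F$), and your observation that $\{1\tilde v10v0\mid v\in\A^*\}=\{1v10\tilde v0\mid v\in\A^*\}$ via the involution $v\mapsto\tilde v$ is the only reindexing needed. The proposal is correct and matches the paper's approach.
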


Note that the same statement no longer holds for finite words. 
Indeed $w=1010010001$ is not balanced but
there is no factor of $w$ in $F$. This is a characteristic difference from Lemma \ref{Unbalance}. 
\bigskip

Let $p(\x,n)$ be the number of distinct factors of $\x\in \A^{\N} \cup \A^{\Z}$ of length $n$. The word 
$\x\in \A^{\N}$
is {\bf sturmian} if $p(\x,n)=n+1$ for all $n=1,2,\dots$. Theorem of Morse-Hedlund asserts that
$\x\in \A^{\N}$ is sturmian if and only if $\x$ is not eventually periodic and balanced 
(See \cite[Theorem 2.1.5]{Lothaire:02}
and \cite[Theorem 6.1.8]{Fogg:02}). 
For $\x\in \A^{\Z}$, there is an eventually period word 
$\y=\dots 000111\dots$ which satisfies $p(\y,n)=n+1$ but not balanced. 
For a bi-infinite word $\x\in \A^{\Z}$, we define $\x$ is {\bf sturmian}
if it is not eventually periodic and $p(\x,n)=n+1$ for all $n\in \Z$. 
Under this definition, a sturmian word is a non eventually periodic balanced word and vice versa
 (see \cite[Proposition 6.2.5]{Fogg:02}).


If $\x\in \X_F \cup \Y_F$ is not eventually periodic, then $\phi(\x)\in  \X_F\cup \Y_F$ is so. 
One can iterate $\phi$ infinitely many times to $\x$. On the other hand, if $\x$ is eventually
periodic, then there is $n\in \N$ that $\phi^n(\x)\in K$, because the period length decreases by
$\phi$. 

For $\alpha\in [0,1]$ and $\beta\in \R$, a lower mechanical word in $\A^{\N} \cup \A^{\Z}$ 
is 
$$ (\lfloor \alpha(n+1)+\beta\rfloor -\lfloor \alpha n+\beta\rfloor) $$
with $n\in \N$ or $n\in \Z$. Its slope is $\alpha$ and intercept $\beta$. 
Upper mechanical word is defined by replacing $\lfloor \cdot \rfloor$ by $\lceil \cdot \rceil$.
A mechanical word is either a lower or an upper mechanical word.
A sturmian word $\x\in \A^{\N} \cup \A^{\Z}$ is characterized as a mechanical word with an
irrational slope $\alpha$ (see \cite[Theorem 2.1.13]{Lothaire:02} and \cite[Chapter 6]{Fogg:02}).
Its slope $\alpha$ is computed as a frequency of $1$, i.e., 
$$\alpha=\alpha(\x)=\lim_{n\rightarrow \infty} \frac{|\x[0,n-1]|_1}{n}.$$
For a sturmian word $\x$, we may assume that the frequency of $0$ is larger than that of $1$.
Our map $\phi$ is conjugate to a continued fraction algorithm acting on slopes, that is, 
$$
\alpha(\phi(\x))= \frac 1{\alpha(\x)} - \left\lfloor \frac 1{\alpha(\x)} \right\rfloor.
$$
Let $
\alpha(\x)=[0;a_1,a_2,a_3,\dots]
$
be the continued fraction expansion of $\alpha$.
Then the sturmian sequence $\phi^{n-1}(\x)$ lies in $0^*(1(0^{a_n}+0^{a_n+1}))^{\N}$.
In other words, the map $\phi$ is the shift acting on the first coordinate of the
multiplicative coding of sturmian words (see \cite[Chapter 6]{Fogg:02}).

Let $\iota(\x)=\sup\{ n\in \N\ |\ |v|=n,\ v01\tilde{v}\prec \x \text{ or } v10\tilde{v}
\prec \x \}$. Every mechanical word 
$\x\in \A^{\N}$ is equivalent to a cutting sequence of integer coordinate grids in $\R^2$ 
by a line (see \cite[Remark 2.1.12]{Lothaire:02}). 
As the line must pass an arbitrary small neighborhood of some lattice point, 
we immediately see arbitrary large symmetric coding 
centered at $01$ or $10$ around the lattice point, i.e., $\iota(\x)=\infty$.
Therefore every non eventually periodic 
sturmian word $\x\in \X_F$, we have $\iota(\x)=\infty$
as well. Here we give a little more general statement with a simple proof.

\begin{prop}\label{Iota}
For every $\x\in \X_F \cup \Y_F$, we have $\iota(\x)<\infty$ if and only if
$\x$ is a purely periodic balanced word.
\end{prop}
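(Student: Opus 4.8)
The plan is to prove both implications of the equivalence, characterizing exactly when $\iota(\x)<\infty$ for $\x\in\X_F\cup\Y_F$. First I would handle the easy direction: if $\x$ is purely periodic and balanced, say $\x=v^{\Z}$ (or $v^{\N}$) with $|v|=\ell$, then any symmetric factor $u01\widetilde{u}$ or $u10\widetilde{u}$ occurring in $\x$ has length bounded in terms of $\ell$. Indeed, if arbitrarily long such symmetric blocks occurred, the reflection symmetry together with the periodicity would force $\x$ to coincide with its mirror image around two distinct centers at unbounded distance; combining two palindromic symmetries of a periodic word forces extra periods and, pushed far enough, this contradicts the word being balanced and non-constant (a purely periodic balanced word that is its own mirror around two far-apart centers would have to be eventually constant, forcing $\x\in K$, where one checks $\iota$ is finite directly). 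So $\iota(\x)<\infty$.

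For the converse, suppose $\x\in\X_F\cup\Y_F$ is \emph{not} purely periodic; I must show $\iota(\x)=\infty$. By Theorem \ref{Balanced} (resp. \ref{Balanced2}), $\x$ is balanced. Split into cases. If $\x\in K$, it is purely periodic, so this case is vacuous (or one treats $0^{\Z}$, $1^{\Z}$ as the purely periodic constant words, and the non-constant members of $K$ are eventually but not purely periodic and have $\iota=\infty$ trivially since they contain arbitrarily long palindromic blocks $0^n$ centered appropriately — actually I would double-check the convention here). If $\x$ is sturmian (non eventually periodic and balanced), the discussion preceding the proposition already shows $\iota(\x)=\infty$ via the cutting-sequence picture: the defining line passes arbitrarily close to lattice points, yielding arbitrarily long codings symmetric about $01$ or $10$. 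The remaining case is $\x$ eventually periodic but not purely periodic. Here I would use the iteration of $\phi$: by Lemma \ref{Periods} and the remark following it, there is $n\in\N$ with $\phi^n(\x)\in K$, and since $\x$ is not purely periodic, for the smallest such $n$ the word $\phi^{n-1}(\x)$ lies in $(1(0^a+0^{a+1}))^{\Z}$ (or the one-sided analogue) and is eventually but not purely periodic, hence contains both $10^a$ and $10^{a+1}$ but with the "defect" pushed to infinity in at most one direction. Lemmas \ref{main1} and \ref{main2} then show that around the transition one finds factors of the form $(10^a)^m10^{a+1}10^a1(0^a1)^m$ for arbitrarily large $m$, each of which contains the symmetric block $(0^a1)^m\cdots$ centered at $01$ of length growing with $m$; unwinding $\phi^{-(n-1)}$ (which only lengthens blocks in a controlled, symmetry-preserving way) transports these to arbitrarily long symmetric factors of $\x$, giving $\iota(\x)=\infty$.

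The main obstacle I expect is the bookkeeping in the converse, eventually-periodic case: one must verify that the symmetry center is genuinely preserved (up to bounded shift) under the inverse substitution $\phi^{-1}$, i.e., that a long block $v01\widetilde{v}$ in $\phi^k(\x)$ pulls back to a long block $v'01\widetilde{v'}$ or $v'10\widetilde{v'}$ in $\phi^{k-1}(\x)$ with $|v'|\to\infty$ as $|v|\to\infty$. This is plausible because $\phi$ replaces each letter by $0^a$ or $0^a1$-type blocks of comparable size, so lengths are distorted only by a bounded multiplicative factor, and the reflection $0\mapsto 0$, $1\mapsto1$ on the $\{a,a+1\}$-alphabet is compatible with mirroring the $\{0,1\}$-word; but making this precise requires carefully matching up where the central $01$ or $10$ of the pulled-back block sits relative to the block decomposition, exactly the kind of analysis done in Lemmas \ref{main1}--\ref{main3}. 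A secondary, smaller obstacle is pinning down the convention for the non-constant words of $K$ so that the statement "$\iota(\x)<\infty$ iff purely periodic" reads correctly — one wants $0^{\Z}$ and $1^{\Z}$ to count as purely periodic balanced with $\iota<\infty$, while a word like $\cdots0001000\cdots$ is not purely periodic and indeed has $\iota=\infty$ (palindromes $0^n$ around the lone $1$), which is consistent.
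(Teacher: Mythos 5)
Your forward direction (purely periodic balanced $\Rightarrow \iota(\x)<\infty$) does not work as written. The chain ``arbitrarily long symmetric blocks $\Rightarrow$ mirror symmetry about two far\--apart centers $\Rightarrow$ extra periods $\Rightarrow$ eventually constant, hence $\x\in K$ where $\iota$ is finite'' contains two false steps. A purely periodic, balanced, non\--constant word can be its own mirror image about infinitely many far\--apart centers: $(001)^{\Z}$ is palindromic about the midpoint of every block $00$, and acquiring an extra period contradicts nothing. Moreover, even if you could force $\x\in K$, the non\--constant elements of $K$ such as $\cdots 0001000\cdots$ have $\iota=\infty$ (as you yourself note at the end), so ``one checks $\iota$ is finite directly'' is not available there. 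The ingredient your argument never uses --- and which is the whole point --- is that the centre of a block $v\,ab\,\tilde{v}$ carries two \emph{distinct} letters $ab\in\{01,10\}$. If $\x$ has period $p$ and such a block is centred at positions $(c,c+1)$ with $|v|\ge p-1$, then periodicity gives $\x[c-(p-1)]=\x[c+1]=b$, while the reflection requires $\x[c-(p-1)]=\x[c+p]=\x[c]=a$; since $a\neq b$ this is absurd, so $\iota(\x)\le p-2$. This is exactly the computation of Lemma \ref{Maxiota} and of the paper's proof of this claim.

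For the converse, your case split is sound: $K$ is handled correctly, the sturmian case via cutting sequences is legitimate (the paper states that argument just before the proposition), and your plan for the eventually\--periodic\--but\--not\--purely\--periodic case --- iterate $\phi$ down to $K$, observe $\iota=\infty$ near the single defect, and pull long symmetric factors back through $\phi^{-1}$ --- is in substance what the paper does. But the step you defer as ``the main obstacle,'' namely that symmetric factors transfer between $\y$ and $\phi(\y)$ with controlled length, is not bookkeeping: it is the key lemma. The paper proves the exact relation $\iota(\y)=2a+1+\sum_{i=1}^{\ell}(a_i+1)$ when $\iota(\phi(\y))=\ell$, by writing the maximal symmetric factor of $\y$ explicitly as $10^{a}10^{a_1}\cdots 10^{a_{\ell}}10^{a}\,\mathbf{01}\,0^{a}10^{a_{\ell}}\cdots 0^{a_1}10^{a+1}$ and reading off its image; the resulting inequality $\iota(\phi(\y))<\iota(\y)$ for $2\le\iota(\y)<\infty$ then handles the sturmian case too (iterate $\phi$ until $\iota\le 1$, which forces eventual periodicity --- no cutting\--sequence input needed) and settles your last case by contraposition, since $\iota(\x)<\infty$ would force $\iota(\phi^{n}(\x))<\infty$ against $\iota=\infty$ on $K\setminus\{0^{\Z},1^{\Z}\}$. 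Until that matching of centres under $\phi$ is carried out, the converse remains a plausible sketch rather than a proof.
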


In other words, $\iota(\x)=\infty$ if and only if $\x$ is a sturmian word or
a balanced periodic word which is not purely periodic.

\begin{proof}
For a non eventually periodic $\x$, we can apply $\phi$ infinitely many times. 
If $\iota(\x)\le 1$, then $10^a10^{a+1}1$ or $10^{a+1}10^a1$ with $a\ge 1$
can not be a factor of $\x$. 
The same is true for  $01^a01^{a+1}0$ or $01^{a+1}01^a0$ with $a\ge 1$ and
$\x$ must be eventually periodic.
Assume that $2\le \iota(\y)<\infty$. We easily see $\iota(\phi(\y))<\iota(\y)$. For example, 
if the central part of 
$$10^{a}10^{a_1}10^{a_{2}}1\dots 10^{a_{\ell}} 10^{a} {\bf 01} 0^{a}1 0^{a_{\ell}}
1 \dots 10^{a_2} 10^{a_1}10^{a+1}$$
attains $\iota(\y)$ 
with $a_i\ge 1, a\ge 1$, then
$$
\phi(\y)=\tau(a_1\dots a_{\ell}(a+1)a a_{\ell}\dots a_1)
$$
where $\tau: \{a,a+1\}^* \rightarrow \{0,1\}^*$ is the morphism defined by $\tau(a)=0, \tau(a+1)=1$. 
Thus $\iota(\y)=2a+1+\sum_{i=1}^{\ell} (a_i+1)>\iota(\phi(\y))=\ell$.
Iterating $\phi$ we get a contradiction.

If $\x$ is eventually periodic, then there exists $n\in \N$ that $\phi^n(\x)\in K$.
We claim that $\iota(\x)<\infty$ if $\x$ is purely periodic. 
In fact, assume that $\iota(\x)=\infty$ and let $p$ be the period of $\x$. 
Take $v$ with $|v|>p$
that $v01\tilde{v}$ (or $v10\tilde{v}$) is a factor of $\x$ 
then we find that $v=w01u$ with $w,u\in \A^*$
that $u$ is a palindrome and we have $\x=((01)u)^{\Z}$. However we have $\iota(\x)=|u|<\infty$, 
a contradiction. This shows the claim.
In the case that $\x\in \phi^{-n}(\y)$ with $\y\in \{0^{\Z},1^{\Z}\}$, $\x$
is purely periodic and $\iota(\x)<\infty$. 
Otherwise $\y\in \{0^{\infty}10^{\infty}, 1^{\infty}01^{\infty}\}$
and we see $\iota(\x)=\infty$, since $\iota(\phi(\x))<\iota(\x)$ and $\iota(\y)=\infty$.
\end{proof}

\begin{ex}\label{TM}
Let $\x=0110100110010110\dots \in \A^{\N}$ 
be the fixed point of the Thue-Morse substitution $0\rightarrow 01, 1\rightarrow 10$.
Since $0011\prec \x$, it is not balanced. We can confirm that $\iota(\x)=6$ and
$F\ni 0100110{\bf 01}0110011, 0011001{\bf 01}1001101 \prec \x$.
\end{ex}

A {\bf Christoffel word} $\nu\in \A^*$ is
the period of a mechanical word of rational slope and intercept $0$.
For coprime integers $p,q$
with $0\le p\le q$, we define the lower Christoffel word of slope $p/q$ by
$$
\left(\left\lfloor \frac {p(i+1)}q \right\rfloor-\left\lfloor \frac {pi}q \right\rfloor\right)_{i=0,1,\dots,q-1}\in \A^q
$$
and replace $\lfloor\cdot\rfloor$ by $\lceil\cdot\rceil$ to get the upper Christoffel word.
A purely periodic balanced word is written as $\nu^{\Z}$
with a Christoffel word $\nu$ (see \cite[Theorem 2.1,3.2,4.1]{Berstel-ADLuca:97}).
A comprehensive survey on Christoffel words is found in \cite{Bersteletal:09}. 
Standard words can be defined by
 a slight modification of Christoffel words, switching indices from $i=0,\dots,q-1$ to $i=1,\dots,q$. 
An equivalent definition of standard words by a generating binary tree is found in \cite{Lothaire:02},
which is related to trees emerged in the study of the classical Markoff-Lagrange spectrum.
We shall use two basic properties of Christoffel words:

\begin{lem}
\label{Central}
A Christoffel word of length greater than $1$ is of the form $0v1$ or $1v0$ with a palindrome $v$.
\end{lem}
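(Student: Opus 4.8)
The plan is to work directly from the floor/ceiling formulas in the definition of the lower and upper Christoffel words. Fix coprime integers $p,q$ with $0\le p\le q$ and let $w$ be the Christoffel word of slope $p/q$; it has length $q$, so the hypothesis $|w|>1$ forces $q\ge 2$, and then coprimality forces $1\le p\le q-1$, hence $0<p/q<1$. For the lower Christoffel word I would write $w=w_1\cdots w_q$ with $w_i=\lfloor ip/q\rfloor-\lfloor(i-1)p/q\rfloor$. From $0<p/q<1$ one reads off $w_1=\lfloor p/q\rfloor=0$, and since $p-1<p-p/q<p$ one gets $w_q=p-\lfloor p-p/q\rfloor=1$; thus $w=0v1$ with $v:=w_2\cdots w_{q-1}$.

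The core step is to prove that $v$ is a palindrome, i.e. $w_k=w_{q+1-k}$ for $2\le k\le q-1$. The key observation is that for every index $i\in\{1,\dots,q-1\}$ the rational $ip/q$ is not an integer, because $\gcd(p,q)=1$; hence $\lceil ip/q\rceil=\lfloor ip/q\rfloor+1$ for such $i$. Combining this with $\lfloor -x\rfloor=-\lceil x\rceil$ I would compute $\lfloor(q+1-k)p/q\rfloor=p+\lfloor(1-k)p/q\rfloor=p-\lceil(k-1)p/q\rceil=p-1-\lfloor(k-1)p/q\rfloor$ and, in the same way, $\lfloor(q-k)p/q\rfloor=p-1-\lfloor kp/q\rfloor$; subtracting gives $w_{q+1-k}=\lfloor kp/q\rfloor-\lfloor(k-1)p/q\rfloor=w_k$. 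This is an entirely routine computation; the only place any care is needed is at the two boundary indices $i=0$ and $i=q$, where $ip/q$ \emph{is} an integer and the identity $\lceil\cdot\rceil=\lfloor\cdot\rfloor+1$ fails — and that failure is exactly what produces the leading $0$ and trailing $1$ instead of a full palindrome.

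Finally I would handle the upper Christoffel word $u=u_1\cdots u_q$, $u_i=\lceil ip/q\rceil-\lceil(i-1)p/q\rceil$, in one line: for $2\le i\le q-1$ the non-integrality remark gives $u_i=(\lfloor ip/q\rfloor+1)-(\lfloor(i-1)p/q\rfloor+1)=w_i$, while $u_1=\lceil p/q\rceil=1$ and $u_q=p-\lceil p-p/q\rceil=p-p=0$, so $u=1v0$ with the very same palindrome $v$. Since by definition every Christoffel word of length $>1$ is the lower or the upper Christoffel word of some reduced fraction $p/q$ with $q\ge 2$, this proves the lemma. (Conceptually, the palindromicity of $v$ is nothing but the point symmetry of the Christoffel lattice path about its midpoint $(q/2,p/2)$, but the computation above is self-contained and short.)
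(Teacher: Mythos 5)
Your proof is correct, and it follows the route the paper itself indicates: the paper's proof of Lemma \ref{Central} simply states that the claim ``directly follows from the definition'' and cites Lothaire and Berstel et al., whereas you actually carry out that direct verification. Your floor/ceiling computation (using that $ip/q\notin\Z$ for $1\le i\le q-1$ by coprimality, so $\lceil ip/q\rceil=\lfloor ip/q\rfloor+1$ there and only the boundary indices break the symmetry) is accurate in every step and makes the lemma self-contained.
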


\begin{proof}
This directly follows from the definition. 
See \cite[Formula (2.1.14)]{Lothaire:02} and \cite[Proposition 4.2]{Bersteletal:09}.
\end{proof}

The palindrome $v$ appeared in the Christoffel word $\nu$ is called a {\bf central word}, which 
plays an important role (see \cite{Lothaire:02}).


\begin{lem}
\label{Maxiota}
Let $\nu$ be a Christoffel word and put $\x=\nu^{\Z}$.
Then $\iota(\x)=|\nu|-2$. 
\end{lem}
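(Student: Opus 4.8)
The plan is to compute $\iota(\x)$ directly from the structure of $\x=\nu^{\Z}$, using Lemma~\ref{Central} to write $\nu=0v1$ or $\nu=1v0$ with $v$ a central (palindrome) word. First I would establish the lower bound $\iota(\x)\ge |\nu|-2$. Suppose $\nu=0v1$; then the bi-infinite word $\x=(0v1)^{\Z}$ contains the factor $1\cdot 0v1\cdot 0 = 1(0v1)0$, and since $v$ is a palindrome, $\widetilde{v01}=10\widetilde{v}=10v$, so reading the block $v01\widetilde{v}$ we see that the word $w:=v$ satisfies $w01\widetilde{w}=v01v\prec\x$ (one period boundary lands in the middle). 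Hence $\iota(\x)\ge |v|=|\nu|-2$. The case $\nu=1v0$ is symmetric, giving a factor of the shape $w10\widetilde{w}$ with $w=v$.

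Next I would prove the matching upper bound $\iota(\x)\le |\nu|-2$, which is the substantive half. Suppose for contradiction that there is $w\in\A^*$ with $|w|\ge |\nu|-1$ and $w01\widetilde{w}\prec\x$ (the case $w10\widetilde{w}$ being symmetric). Write $u:=w01\widetilde{w}$, so $|u|=2|w|+2\ge 2|\nu|$; thus $u$ is a factor of $\x=\nu^{\Z}$ of length at least $2|\nu|$, so $u$ spans at least two full periods and $\x$ restricted to the support of $u$ is periodic of period $p:=|\nu|$. The key point is that $u=w01\widetilde{w}$ has a palindromic structure around its center: more precisely, $u$ read backwards is $w 10 \widetilde w$, so $u$ is "almost" a palindrome — it becomes a palindrome after swapping the central $01$ with $10$. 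I would exploit that $\x$ is purely periodic of period $p$ together with this near-symmetry: the center of $u$ sits at a well-defined position, and the periodicity forces the period pattern itself to have the form $(0y1)$ or $(1y0)$ with $y$ a palindrome and $|y|\ge |v|=p-2$; since $|y|\le p-2$ always (the period has length $p$ and two of its letters are the framing $0,1$), we get $|y|=p-2$ and in fact the occurrence of $u$ cannot be strictly longer than what a single period boundary produces. Concretely, if $u$ had length $\ge 2p$ then it would contain the central block across \emph{two} distinct period-boundaries, and comparing the two resulting palindromic centers (which must be at positions differing by a multiple of $p$) with the fixed periodic pattern forces two incompatible symmetry axes on a periodic word, hence $\x\in K$ or $\nu$ has length $1$ — contradicting that $|\nu|>1$ and $\nu$ is a nontrivial Christoffel word (equivalently, one can invoke that a periodic word with two symmetry centers at incompatible positions is constant).

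The cleanest route for the upper bound, which I would actually write, is via Proposition~\ref{Iota} and its proof mechanism rather than a bare-hands period argument: since $\x=\nu^{\Z}$ is purely periodic and balanced, $\iota(\x)<\infty$, and the proof of Proposition~\ref{Iota} shows that if $v01\widetilde v\prec\x$ with $|v|>p=|\nu|$ then one can write $v=w01u$ with $u$ a palindrome and $\x=((01)u)^{\Z}$, giving $\iota(\x)=|u|$; tracing lengths, $|u|=p-2$, and $|v|\le p-1$ follows because any longer symmetric factor would force the period itself to be $(01)u$ of length $|u|+2=p$, pinning $\iota(\x)=|u|=p-2$ exactly. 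Thus $\iota(\x)\le p-2$, and combined with the lower bound, $\iota(\x)=|\nu|-2$. The main obstacle is making the upper-bound argument fully rigorous: ruling out a "long" symmetric factor that wraps around several periods requires the observation that a nonconstant periodic bi-infinite word admits at most one center of a given parity for a maximal palindromic (or anti-palindromic $01\leftrightarrow10$) factor per period, which is exactly the rigidity already extracted in the proof of Proposition~\ref{Iota}; I would lean on that proof to keep this one short.
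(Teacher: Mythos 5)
Your lower bound is essentially the paper's: Lemma~\ref{Central} gives $\nu=0v1$ or $1v0$ with $v$ a palindrome, and the factor straddling a period boundary (for $\nu=0v1$ it is $\ldots v1|0v\ldots = v10\tilde v$, not $v01\tilde v$ as you wrote, but the definition of $\iota$ accepts either center) yields $\iota(\x)\ge|v|=|\nu|-2$. The problem is the upper bound, where neither of your two routes actually closes. Your first route only gestures at ``two incompatible symmetry axes'' without proving the rigidity statement it needs, and in any case it is aimed at factors of length $\ge 2p$, i.e.\ half-length $|w|\ge p-1$, which is the right threshold but is never actually ruled out. Your second route leans on the proof of Proposition~\ref{Iota}, but that argument only excludes symmetric factors $v\,ab\,\tilde v$ with $|v|>p$; the remaining cases $|v|\in\{p-1,p\}$ are exactly what you must eliminate to get $p-2$, and ``tracing lengths'' does not do it. Worse, the step of Proposition~\ref{Iota} you invoke asserts $\iota\bigl(((01)u)^{\Z}\bigr)=|u|$, which is an instance of the very statement you are proving, so the appeal is circular as a method for pinning down the exact value.

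The paper's upper bound is a two-line periodicity check that you did not find, and it makes all of this unnecessary. Suppose $\y=\sigma^n(\x)$ has $\y[0,1]=ab$ with $\{a,b\}=\{0,1\}$ and $v\,ab\,\tilde v\prec\x$ is centered there, so that $\y[1+j]=\y[-j]$ for $j=1,\dots,|v|$. Periodicity with period $p=|\nu|$ gives $\y[p]=\y[0]=a$ and $\y[1-p]=\y[1]=b$; if $|v|\ge p-1$ one could take $j=p-1$ and conclude $a=\y[p]=\y[-(p-1)]=\y[1-p]=b$, a contradiction. Hence $|v|\le p-2$. Note that this half uses only periodicity and $a\neq b$, not the Christoffel structure (which enters only in the lower bound). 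I would replace your upper-bound discussion with this computation.
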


\begin{proof}
Lemma \ref{Central} shows $\iota(\x)\ge |\nu|-2$. 
Let $\y=\sigma^n(\x)$ and $\y[0,1]=ab$ with $\{a,b\}=\{0,1\}$. 
From periodicity, we see $\y[1-|\nu|,1-|\nu|]=b$ and $\y[|\nu|,|\nu|]=a$. This implies $\iota(\x)\le |\nu|-2$.
\end{proof}

\begin{rem}\label{rem:ch}
In Lemma \ref{Maxiota}, $\iota(\x)$ is the maximum length of a factor $v$ that $\tilde{v}abv\prec \x$
with $\{a,b\}=\{0,1\}$. The equality $|v|=\iota(\x)(=|\nu|-2)$ 
is attained at exactly two indices $n$ modulo $|\nu|$. 
They correspond to the upper and lower Christoffel words (see Appendix).
\end{rem}

\section{Proof of Theorems \ref{thm:main} and \ref{thm:main2}}
\label{Proofs}
\subsection{Preparation}
If \(\alpha_2=\alpha^{-1}\), then 
\begin{eqnarray}\label{Eqn:10}
\frac{1}{\alpha+1}=(\ldots001.\overline{1}00\ldots)_{\alpha}
=(0^{\infty}1. \overline{1} 0^{\infty})_{\alpha},
\end{eqnarray}
where \(b^{\infty}\) denotes the infinite sequence of \(b\). 
For simplicity, we write 
\begin{eqnarray*}
(0^{\infty}y_n y_{n-1}\ldots y_0. y_{-1} y_{-2}\ldots y_{-m} 0^{\infty})_{\alpha}
=:(y_n y_{n-1}\ldots y_0. y_{-1}y_{-2}\ldots y_{-m})_{\alpha}.
\end{eqnarray*}
In particular, (\ref{Eqn:10}) implies that 
\[
\frac{1}{\alpha+1}=(01.\overline{1})_{\alpha}=(\overline{1}1.0)_{\alpha}.
\]
Moreover, if \(\alpha_2<0\), then 
\begin{align}\label{eqn:basic2}
\frac{b}{\alpha^2+1}=(1.0\overline{1})_{\alpha}=(\overline{1}01.0)_{\alpha}.
\end{align}
Let \(\mathbf{y}=(y_n)_{n=-\infty}^{\infty}\) and 
\(\mathbf{y}'=(y_n')_{n=-\infty}^{\infty}\) be sequences of integers such that 
\(|y_n|,|y_n'|\leq 1\) for each \(n\). 
Now we define the sequence \(\underline{\psi}(\mathbf{y})
=(\psi_n(\mathbf{y}))_{n=0}^{\infty}\) as follows: 
If $\alpha_2=\alpha^{-1}$, then we put 
\begin{eqnarray*}
\psi_0(\mathbf{y}):=y_0
\mbox{, }
\psi_n(\mathbf{y}):=y_n+y_{-n}
\mbox{ for }
n=1,2,\ldots.
\end{eqnarray*}
If $\alpha_2=-\alpha^{-1}$, then let 
\begin{eqnarray*}
\psi_0(\mathbf{y}):=y_0
\mbox{, }
\psi_n(\mathbf{y}):=(-1)^n y_n+y_{-n}
\mbox{ for }
n=1,2,\ldots.
\end{eqnarray*}
We denote 
\(\mathbf{y}\gg \mathbf{y}'\) if 
there is a nonnegative integer \(l\) such that, for \(h=0,1,\ldots, l-1\), 
\(\psi_h(\mathbf{y})=\psi_h(\mathbf{y}')\) and that 
\(\psi_l(\mathbf{y})>\psi_l(\mathbf{y}')\) and $|\psi_n(\mathbf{y})-\psi_n(\mathbf{y}')|\le 2$ for
$n\ge l$. 
\begin{lem}
If \(\mathbf{y}\gg\mathbf{y}'\), then 
\((\mathbf{y})_{\alpha}>(\mathbf{y}')_{\alpha}\).
\label{lex}
\end{lem}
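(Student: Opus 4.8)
The plan is to relate the ordering $\gg$ defined via the sequence $\underline\psi$ directly to the value $(\y)_\alpha$ by unfolding the series representation \eqref{Eqn:3} and grouping the terms $y_n$ and $y_{-n}$. Recall that for a quadratic unit with conjugate $\alpha_2=\pm\alpha^{-1}$,
\[
(\y)_\alpha=\frac{1}{\alpha-\alpha_2}\left(\sum_{n\ge 0} y_n\alpha^{-n}+\sum_{n\ge 1} y_{-n}\alpha_2^{n}\right),
\]
and since $\alpha_2^{n}=(\pm 1)^n\alpha^{-n}$, pairing the $n$-th forward term with the $n$-th backward term gives exactly
\[
(\y)_\alpha=\frac{1}{\alpha-\alpha_2}\sum_{n\ge 0}\psi_n(\y)\,\alpha^{-n},
\]
in both cases ($\alpha_2=\alpha^{-1}$ or $\alpha_2=-\alpha^{-1}$), by the very definition of $\psi_n$. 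So the lemma reduces to a purely one-sided statement: if $(c_n)_{n\ge 0}$ and $(c_n')_{n\ge 0}$ are integer sequences with $|c_n|,|c_n'|$ bounded, agreeing for $n<l$, with $c_l>c_l'$ and $|c_n-c_n'|\le 2$ for all $n\ge l$, then $\sum c_n\alpha^{-n}>\sum c_n'\alpha^{-n}$.

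First I would reduce to the difference sequence $d_n=c_n-c_n'$, which satisfies $d_n=0$ for $n<l$, $d_l\ge 1$, and $|d_n|\le 2$ for $n\ge l$; the goal is $\sum_{n\ge l}d_n\alpha^{-n}>0$, equivalently $d_l+\sum_{n>l}d_n\alpha^{-(n-l)}>0$. The worst case is $d_l=1$ and $d_n=-2$ for all $n>l$, giving the bound
\[
\sum_{n\ge l}d_n\alpha^{-n}\ \ge\ \alpha^{-l}\left(1-2\sum_{k\ge 1}\alpha^{-k}\right)=\alpha^{-l}\left(1-\frac{2}{\alpha-1}\right)=\alpha^{-l}\cdot\frac{\alpha-3}{\alpha-1}.
\]
Since $\alpha>3$ by hypothesis in Theorems \ref{thm:main} and \ref{thm:main2}, this is strictly positive, so $(\y)_\alpha-(\y')_\alpha=\frac{1}{\alpha-\alpha_2}\sum_{n\ge l}d_n\alpha^{-n}>0$, using also that $\alpha-\alpha_2>0$.

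The main obstacle — really the only subtle point — is getting the pairing identity $(\y)_\alpha=\frac{1}{\alpha-\alpha_2}\sum\psi_n(\y)\alpha^{-n}$ exactly right, including the correct sign $(-1)^n$ in the case $\alpha_2=-\alpha^{-1}$, since then $\alpha_2^n=(-1)^n\alpha^{-n}$ and the definition $\psi_n(\y)=(-1)^ny_n+y_{-n}$ is precisely engineered so that $y_n\alpha^{-n}+y_{-n}\alpha_2^n=(-1)^n\bigl((-1)^ny_n+y_{-n}\bigr)\alpha^{-n}\cdot(-1)^n$... one must be careful: in fact $y_n\alpha^{-n}+y_{-n}\alpha_2^n = y_n\alpha^{-n}+(-1)^n y_{-n}\alpha^{-n}$, which is $((-1)^n\psi_n(\y))\alpha^{-n}$ with this definition, so a global sign $(-1)^n$ appears and one should instead define things so it cancels; I would simply track signs carefully and, if needed, replace $\alpha^{-n}$ by $\alpha_2^n\cdot(\pm1)$ uniformly, noting that multiplying every term by a fixed sign pattern does not affect the comparison once we observe the leading discrepant index $l$ dominates. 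After the identity is pinned down, the estimate above is routine and the bound $\alpha>3$ is used exactly once, in the geometric-series tail. I would also remark that convergence of all series is guaranteed because $\alpha$ is Pisot (so $|\alpha_2|<1$) and the coefficients are bounded, as already used in \S\ref{SD}.
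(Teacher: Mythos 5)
Your tail estimate is exactly the paper's argument: the difference of values is bounded below by $\frac{1}{\alpha-\alpha_2}\bigl(\alpha^{-l}-2\sum_{h\ge l+1}\alpha^{-h}\bigr)=\frac{\alpha^{-l}}{\alpha-\alpha_2}\cdot\frac{\alpha-3}{\alpha-1}>0$ using $\alpha>3$, and that part is fine. The gap is in the pairing identity, which you yourself single out as the only subtle point and then do not actually resolve. You expand $(\y)_\alpha$ with the forward digit $y_n$ ($n\ge 0$) carrying the weight $\alpha^{-n}$ and the backward digit $y_{-n}$ carrying $\alpha_2^{\,n}$. In the paper's notation $(\dots y_1y_0.y_{-1}y_{-2}\dots)_\alpha$ the assignment is the other way around: by (\ref{Eqn:3}) the digit sitting $q$ places to the \emph{right} of the point carries $\alpha^{-q}$ and the digit $q$ places to the left carries $\alpha_2^{\,q}$, and in this notation the digit $q$ places to the right of the point is $y_{-q}$. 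With the correct assignment one gets $y_n\alpha_2^{\,n}+y_{-n}\alpha^{-n}=\bigl((-1)^ny_n+y_{-n}\bigr)\alpha^{-n}=\psi_n(\y)\,\alpha^{-n}$ exactly, with no residual sign --- this is precisely why $\psi_n$ puts the $(-1)^n$ on the forward digit. With your assignment a factor $(-1)^n$ survives, as your own computation in the last paragraph shows (contradicting your earlier claim that the identity holds ``in both cases by the very definition of $\psi_n$'').

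Your proposed remedy --- that a fixed sign pattern ``does not affect the comparison once the leading discrepant index dominates'' --- is false. If the difference were $\frac{1}{\alpha-\alpha_2}\sum_{n\ge l}(-1)^n\bigl(\psi_n(\y)-\psi_n(\y')\bigr)\alpha^{-n}$ with $l$ odd, the dominant term would be negative and the inequality would reverse. Concretely, in the case $\alpha_2=-\alpha^{-1}$ take $\y$ with $y_1=-1$ and all other entries $0$, and $\y'$ identically $0$: then $\psi_0(\y)=0=\psi_0(\y')$ and $\psi_1(\y)=1>0=\psi_1(\y')$, so $\y\gg\y'$, and the correct value is $(\y)_\alpha=\frac{1}{\alpha-\alpha_2}y_1\alpha_2=\frac{\alpha^{-1}}{\alpha-\alpha_2}>0=(\y')_\alpha$, consistent with the lemma; but under your weights you would compute $\frac{1}{\alpha-\alpha_2}y_1\alpha^{-1}<0$ and conclude the opposite inequality. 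So as written your proof fails for $\alpha_2=-\alpha^{-1}$ whenever the first discrepant index is odd. The fix is not after-the-fact sign bookkeeping but getting the digit--weight correspondence right at the start; once that is done the identity $(\y)_\alpha=\frac{1}{\alpha-\alpha_2}\sum_{n\ge0}\psi_n(\y)\alpha^{-n}$ holds in both cases and the remainder of your argument coincides with the paper's.
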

\begin{proof}
Suppose that \(\mathbf{y}\gg \mathbf{y}'\). Let 
\(
l=\min\{h\geq 0 \mid y_h \ne y_h'\}.
\) 
Then, since \(\alpha> 3\) and \(|\psi_n(\mathbf{y})-\psi_n(\mathbf{y}')|\leq 2\) for each \(n\), 
we get 
\begin{align*}
(\mathbf{y})_{\alpha}-(\mathbf{y}')_{\alpha}
&\geq
\frac{1}{\alpha-\alpha_2}\left(
\alpha^{-l}-\sum_{h\geq l+1}\alpha^{-h}\cdot 2
\right)\\
&\geq \frac{\alpha^{-l}}{\alpha-\alpha_2}\left(
1-2\sum_{h\geq 1}\alpha^{-h}
\right)
>0.
\end{align*}
\end{proof}

\subsection{Proof of Theorem \ref{thm:main}}
Assume that $\xi\in \R$ satisfies
\begin{eqnarray}
\eta=\limsup_{n\to\infty}\|\xi\alpha^n\|\le \frac{1}{\alpha+1}.
\label{eqn:1}
\end{eqnarray}
We show that $|s_{n}(\alpha;\xi)|\le 1$ for any sufficiently large $n$. 
In fact, suppose \(|s_{n}(\alpha;\xi)|\geq 2\) for infinitely many \(n\geq 0\). 
By (\ref{Eqn:2}), 
\begin{eqnarray*}
|\varepsilon( \xi  \alpha^{n+1})+b \varepsilon( \xi  \alpha^{n})
+\varepsilon( \xi \alpha^{n-1})|
\geq 2.
\end{eqnarray*}
Hence, we get 
$$
\max_{i\in\{n+1,n,n-1\}}\|\xi\alpha^i\|
\geq
\frac{2}{2+b}=
\frac{2}{2+\alpha+\alpha^{-1}}>
\frac{1}{\alpha+1},
$$
which contradicts (\ref{eqn:1}). In particular, any limsup word $(w_n)_{n\in \Z}$ for $\xi$ satisfies $|w_n|\le 1$ for all $n\in \Z$. 

In the sequel, we list forbidden subwords of $(w_n)$. 
We first show that the words $010$ and $11$ are forbidden. Suppose that $010$ or $11$ appears in $(w_n)$. 
Remark \ref{rem:limsup} implies that $010$ or $11$ appears infinitely many times in $(s_n)$. 
Observe that  
$$
0^{\infty}1.\overline{1}0^{\infty}\ll \dots s_{n-2}01.0 s_{n+2}\dots
$$
and
$$
0^{\infty}1.\overline{1}0^{\infty}\ll \dots s_{n-2}11.s_{n+1}\dots
$$
if $s_n\in \{-1,0,1\}$ for any $n\in \Z$. 
Thus, we conclude by (\ref{new1}) and Lemma \ref{lex} that $\limsup_{n\to\infty}\|\xi\alpha^n\|>1/(1+\alpha)$, which contradicts (\ref{eqn:1}). 
From $g((-s_n))=-g((s_n))$, if a subword $v_1v_2\dots v_{\ell}$ is forbidden in $(w_n)$, then $(-v_1)\dots (-v_{\ell})$
also does not show in $(w_n)$. Therefore $0\overline{1}0$, $\overline{1}\hspace{0.4mm}\overline{1}$ are forbidden as well. 
Hereafter we skip this symmetric discussion within the proof of Theorem \ref{thm:main}.

\begin{lem}
$
10^k1$ and $\overline{1} 0^k \overline{1}$
are forbidden for $k\ge 0$.
\end{lem}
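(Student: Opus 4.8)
The plan is to show that if $10^k1$ appears in a limsup word $(w_n)$, then by the recurrence property of limsup words (Remark \ref{rem:limsup}) it also appears infinitely often in $(s_n)$, and then exhibit a bi-infinite integer sequence obtained by surgery on $(s_n)$ whose $g$-value exceeds $1/(1+\alpha)$, contradicting (\ref{eqn:1}). Since we already know $|w_n|\le 1$, we work in the alphabet $\{\overline{1},0,1\}$; by the symmetry $g((-s_n))=-g((s_n))$ it suffices to treat $10^k1$. The comparison tool is Lemma \ref{lex}: I must produce, centered appropriately, a window whose $\underline{\psi}$-coding dominates that of $\frac{1}{\alpha+1}=(01.\overline 1)_\alpha=(\overline 1 1.0)_\alpha$ at the first place of disagreement while staying within distance $2$ everywhere.

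The key steps, in order. First, suppose $10^k1\prec(w_n)$. If $k=0$ we have the word $11$, already shown forbidden, so assume $k\ge 1$. By Remark \ref{rem:limsup} the word $10^k1$ occurs infinitely often in $(s_n)$. Second, split into the parity of the separation: place the origin so that the block reads $s_{-p}\dots s_{-1}s_0 s_1\dots s_q = 1 0^k 1$ with the two $1$'s at positions chosen symmetrically when $k$ is even (origin on the central $0$) or straddling the origin when $k$ is odd, so that $\psi_0$ or $\psi_1$ already picks up a $+1$ contribution from the $1$'s. Third, compute the low-order values $\psi_h(\mathbf{s})$ for $h$ below the first index where a $1$ sits: these are $0$ (the intervening $0$'s), matching the coding of $(01.\overline 1)_\alpha$ whose low-order $\psi$ values vanish, and then at the critical index the block contributes $+1$ (or $+2$) versus the $\overline{1}$ or $1$ of the comparison word, giving strict domination there. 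Fourth, check the tail bound $|\psi_n(\mathbf{s})-\psi_n(\mathbf{y}')|\le 2$: since all symbols are in $\{\overline 1,0,1\}$ and $\psi_n$ is a sum of at most two such symbols, $|\psi_n|\le 2$, and the comparison word has $\psi_n=0$ for large $n$, so the bound holds. Then Lemma \ref{lex} together with (\ref{new1}) yields $(\mathbf{s})_\alpha>\frac{1}{\alpha+1}$ at infinitely many shifts, so $\limsup_{n\to\infty}\|\xi\alpha^n\|>\frac{1}{1+\alpha}$, contradicting (\ref{eqn:1}).

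The main obstacle I anticipate is arranging the two parity cases so that the $\underline\psi$-coding genuinely dominates at the \emph{first} index of disagreement rather than merely somewhere — the folding $\psi_n(\mathbf y)=y_n+y_{-n}$ means I must be careful that the $0$'s flanking the two $1$'s on \emph{both} sides of the origin do not accidentally create a mismatch (or a cancellation) at a lower index than intended; this is exactly the kind of bookkeeping that the $010$/$11$ case above handled by hand, and here the interior run $0^k$ makes the window longer. A secondary point is that the block $10^k1$ alone does not pin down the symbols outside it, so I should either choose the comparison representation $(\overline 1 1.0)_\alpha$ versus $(01.\overline 1)_\alpha$ to match whichever side is convenient, or simply note that any completion of the bi-infinite word by symbols in $\{\overline 1,0,1\}$ keeps $|\psi_n|\le 2$, which is all Lemma \ref{lex} needs for the tail. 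Once the window is correctly centered, the inequality is immediate from $\alpha>3$ as in the proof of Lemma \ref{lex}.
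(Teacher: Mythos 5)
Your plan has two genuine gaps, and the first is fatal to the proposed centering. Every representation of $1/(1+\alpha)$, whether $(01.\overline{1})_\alpha$ or $(\overline{1}1.0)_\alpha$, has the folded coding $\psi_0=1$, $\psi_1=-1$, $\psi_n=0$ for $n\ge 2$; its low-order $\psi$-values do \emph{not} vanish. If you put the origin anywhere inside the run $0^k$, your window has $\psi_0=0<1$, so the first index of disagreement is $0$ and it goes the wrong way: the relation $\gg$ fails at the outset and Lemma \ref{lex} gives nothing. (Indeed a window centred in the middle of $10^k1$ has $g$-value of order $\alpha^{-k/2}$, far below $1/(1+\alpha)$, so no contradiction is available from that shift.) The origin must sit on one of the two $1$'s, where $\psi_0=1$ matches the comparison word. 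The tail bound $|\psi_n|\le 2$ that you verify is not where the difficulty lies.

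Second, even with the origin at the rightmost $1$, a single application of Lemma \ref{lex} cannot finish, because the digits outside the block are uncontrolled and can defeat the domination. Avoiding $\gg$ at level $1$ forces $s_{-1}=\overline{1}$; at level $2$ it forces $s_{-2}\le 0$, and $s_{-2}=0$ follows only because $\overline{1}\,\overline{1}$ is already forbidden; but at level $3$ the choice $s_{-3}=\overline{1}$ gives $\psi_3=-1\le 0$ and kills the comparison, unless one already knows that $\overline{1}0\overline{1}$ is forbidden. This is precisely why the paper argues by induction on $k$: assuming $10^j1$ and $\overline{1}0^j\overline{1}$ are forbidden for $j\le k-1$, the window $\dots 10^{k}1.\overline{1}s_{-2}\dots$ forces $s_{-2}=\dots=s_{-k-1}=0$ step by step, each step combining the $\gg$-constraint with the inductive hypothesis, until at level $k+1$ the left $1$ of the block makes $\psi_{k+1}=1>0$ and $\gg$ holds after all, a contradiction. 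Your write-up names the first-disagreement issue as an ``obstacle'' but supplies no mechanism for controlling the flanking digits; without the induction (or an equivalent recursion on shorter patterns) the argument does not close.
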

\begin{proof}
This is valid for $k=0$. Assume that they are forbidden for $k\le t-1$.
If $10^{t}1$ appeared in $(w_n)$, then since
$$
\dots 10^{t}1.\overline{1}s_{n-2}\ldots\gg \ 0^{\infty}1.\overline{1}0^{\infty}
$$
is not possible by Lemma \ref{lex}, from the induction assumption
we see $s_{n-k}=0$ for $k=2,\dots, t+1$, arriving at a contradiction.
Therefore the words are forbidden for $k=t$ as well.
\end{proof}

\begin{lem}
$0(1\overline{1})^k10$ and $0(\overline{1}1)^k\overline{1}0$ are forbidden
for $k\ge 0$.
\label{lem:3}
\end{lem}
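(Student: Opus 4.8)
The plan is to argue by induction on $k$, exactly parallel to the proof that $10^k1$ and $\overline{1}0^k\overline{1}$ are forbidden, but now tracking the symmetric expansion around a would-be occurrence of $0(1\overline{1})^k10$. The base case $k=0$ is the statement that $010$ is forbidden, which was already established. So assume $0(1\overline{1})^t10$ and $0(\overline{1}1)^t\overline{1}0$ are forbidden for all $t\le k-1$, and suppose toward a contradiction that $0(1\overline{1})^k10$ appears in the limsup word $(w_n)$, hence (by Remark \ref{rem:limsup}) infinitely often in $(s_n)$.

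First I would place the origin so that the pattern reads $s_{-k-1}\cdots s_{k+1} = 0(1\overline{1})^k10$, i.e. $s_0$ sits at the central $1$ of the block $\ldots 1\overline{1}1\,\mathbf{0}\ldots$ — more precisely I would align it so that the central $10$ straddles the radix point, giving a word of the shape $\ldots 0(1\overline{1})^k1.0\ldots$ or its mirror $\ldots 0.1(\overline{1}1)^k0\ldots$ depending on parity. Using the key formula (\ref{new1}) together with Lemma \ref{lex}, I would compare $((s_n))_\alpha$ with the reference value $1/(\alpha+1) = (0^\infty 1.\overline{1}0^\infty)_\alpha$ from (\ref{Eqn:10}). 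The induction hypothesis forces many of the surrounding digits $s_n$ (for $|n|$ slightly larger than $k$) to vanish: since a longer forbidden block of the same family cannot occur, the only way to continue the pattern $0(1\overline{1})^k10$ consistently is to pad it with $0$'s on the relevant side. Once those forced zeros are in place, the resulting sequence dominates (in the $\gg$ order) the reference sequence $0^\infty 1.\overline{1}0^\infty$, so Lemma \ref{lex} yields $\limsup_{n\to\infty}\|\xi\alpha^n\| > 1/(\alpha+1)$, contradicting (\ref{eqn:1}).

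The delicate point — and I expect this to be the main obstacle — is the bookkeeping of the $\psi_n$ values under the folding map $\underline{\psi}$ when the alternating block $(1\overline{1})^k$ is present. Because $\psi_n(\mathbf{y}) = y_n + y_{-n}$ (in the $\alpha_2>0$ case), the alternating $\pm 1$ pattern on one side can partially cancel or reinforce the digits on the other side, so I must check carefully that after imposing the forced zeros the first place of disagreement with $0^\infty 1.\overline{1}0^\infty$ has $\psi_l$ strictly larger, and that $|\psi_n(\mathbf{y}) - \psi_n(\mathbf{y}')| \le 2$ holds for all $n \ge l$ — the hypothesis needed to invoke Lemma \ref{lex}. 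This is where the precise choice of origin (the parity of $k$ and which side the $0$-padding lands on) matters, and I would likely split into the two cases according to whether the folded comparison is controlled from the "left tail" $y_{-n}$ or the "right tail" $y_n$. Finally, the mirror statement $0(\overline{1}1)^k\overline{1}0$ is forbidden follows from the sign symmetry $g((-s_n)) = -g((s_n))$ already invoked, so no separate argument is needed.
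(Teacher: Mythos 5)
Your scaffolding matches the paper's proof: induction on $k$ with base case $010$, $0\overline{1}0$; alignment of the occurrence as $\ldots 0(1\overline{1})^k1.0\ldots$ at the radix point; comparison with $0^{\infty}1.\overline{1}0^{\infty}$ via the order $\gg$ and Lemma \ref{lex}; and the sign symmetry $g((-s_n))=-g((s_n))$ to dispose of the mirror family. But the central mechanism you propose --- ``the induction hypothesis forces the surrounding digits to vanish, and once those forced zeros are in place the sequence dominates the reference'' --- is not what happens, and the step you yourself flag as ``the main obstacle'' is exactly where it breaks. Write $y_0=1$ for the final $1$ of the block, $y_{-1}=0$, and let $y_1,\dots,y_{2k+1}$ run backwards through $(1\overline{1})^k$ and the leading $0$, so that $y_{2j}=1$ for $1\le j\le k$ and $y_{2k+1}=0$. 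Then $\psi_2(\mathbf{y})=y_2+y_{-2}=1+y_{-2}$, so if $y_{-2}$ were $0$ (your ``padding'') one would already have $\mathbf{y}\gg 0^{\infty}1.\overline{1}0^{\infty}$ at $l=2$: avoiding domination \emph{forbids} a zero there and forces $y_{-2}=\overline{1}$. The induction hypothesis never forces zeros; it is used, together with the forbidden word $\overline{1}\hspace{0.4mm}\overline{1}$, at the odd positions, where the shorter forbidden words $0(\overline{1}1)^{j}\overline{1}0$ ($j\le k-1$) rule out $y_{-2j-3}=0$ and force $y_{-2j-3}=1$. Alternating these two forcings yields $y_{-2}\dots y_{-2k-1}=(\overline{1}1)^{k}$, and only then does the contradiction appear: $\psi_{2k+1}(\mathbf{y})=y_{2k+1}+y_{-2k-1}=0+1=1>0$ while $\psi_j(\mathbf{y})=\psi_j(0^{\infty}1.\overline{1}0^{\infty})$ for $j\le 2k$ and $|\psi_n(\mathbf{y})|\le 2$ throughout, so $\mathbf{y}\gg 0^{\infty}1.\overline{1}0^{\infty}$ after all and Lemma \ref{lex} contradicts $\eta\le 1/(1+\alpha)$.

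So the gap is concrete: you never address why the digits following the block cannot be the alternating word $(\overline{1}1)^{k}$, which exactly cancels the folded contribution of $(1\overline{1})^k$ term by term --- in fact they \emph{must} be that word, and the whole point of the proof is to chase this forced alternation to the end of the block, where the leading $0$ of $0(1\overline{1})^k10$ fails to cancel the forced final $1$. You have transplanted the mechanism of the preceding lemma on $10^k1$ (where the continuation really is forced to be $0$'s) without adapting it to the alternating block. Relatedly, your phrase ``a longer forbidden block of the same family cannot occur'' has the induction backwards: the hypothesis forbids the \emph{shorter} words $0(\overline{1}1)^{j}\overline{1}0$ with $j\le k-1$, and those are what do the work at the odd positions.
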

\begin{proof}
We already know the case $k=0$. Assume that these words are forbidden for $k\le t-1$
and $0(1\overline{1})^t10$ appeared in the limsup word $(w_n)$, Then since
$$
\dots 0(1\overline{1})^t1.0s_{n-2}\ldots\gg \ 0^{\infty}1.\overline{1}0^{\infty}
$$
is not allowed, we obtain $s_{n-2}=\overline{1}$. 
Since $0\overline{1}0$, $\overline{1}\hspace{0.4mm}\overline{1}$ are
forbidden, this implies $s_{n-3}=1$. Continuing in this manner, we must have 
$s_{n-2}\dots s_{n-2t-1}=(\overline{1}1)^{t}$, arriving at a contradiction.
\end{proof}
Having these forbidden words, we see either $(w_n)\in (0^*+(1\overline{1})^*)^{\Z}$
or  $(w_n)\in (0^*+(\overline{1}1)^*)^{\Z}$ holds.  If $(w_n)\in (0^*+(1\overline{1})^*)^{\Z}$,
then by using the monoid morphism \(\gamma:\A^{\ast}\to\B^{\ast}\) defined by 
\begin{eqnarray*}
\gamma(1)=1\overline{1}
\mbox{, }
\gamma(0)=0,
\end{eqnarray*}
we obtain 
$(w_n)=\gamma((x_n))$ with some $(x_n)\in \A^{\Z}.$
When $(w_n)\in (0^*+(\overline{1}1)^*)^{\Z}$,  we use
\begin{eqnarray*}
\widetilde{\gamma}(1)=\overline{1}1
\mbox{, }
\widetilde{\gamma}(0)=0,
\end{eqnarray*}
to get $(w_n)=\widetilde{\gamma}((x_n))$ with some $(x_n)\in \A^{\Z}$.  Now we study forbidden words of $(x_n)$.

\begin{lem}
\label{localbalance}
$F=\{ 0v01\tilde{v}1, 1\tilde{v}10v0\ |\ v\in \A^* \}$
is a set of forbidden words of $(x_n)_{n\in \Z}$. 
\end{lem}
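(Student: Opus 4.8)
The plan is to argue by contradiction: from an occurrence in $(x_n)$ of a word of $F$ we will build a shift of $(s_n)=(s_n(\alpha;\xi))$ whose $(\cdot)_\alpha$-value has modulus strictly larger than $1/(1+\alpha)$, and this for infinitely many shifts, contradicting (\ref{eqn:1}). By Remark \ref{rem:limsup}, any finite factor of the limsup word $(w_n)$ occurs infinitely often in $(s_n)$, so it suffices to rule out $\gamma(u)\prec(w_n)$ for $u\in F$. By the $(w_n)\mapsto-(w_n)$ symmetry (which preserves the limsup value and turns $\widetilde\gamma$ into $\gamma$) we may assume $(w_n)=\gamma((x_n))$, and then only two shapes of $u$ need be treated; the two computations are mirror images of each other, one producing a sequence that beats $1/(1+\alpha)$ via $\gg$ and the other one beating $-1/(1+\alpha)$ via $\ll$. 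I will present the first.

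Suppose $u:=0v01\tilde v1\prec(x_n)$ with $v\in\A^*$ and set $m=|v|$. Then $\gamma(u)=0\,\gamma(v)\,0\cdot 1\overline{1}\cdot\gamma(\tilde v)\cdot 1\overline{1}$ occurs in $(w_n)$, hence infinitely often in $(s_n)$. Fix one occurrence and relabel $(s_n)$ so that index $0$ lands on the first letter of the block $1\overline{1}=\gamma(1)$ coming from the middle $1$ of $u$. Then $s_0=1$, $s_1=\overline{1}$, $s_{-1}=0$ (the single letter $\gamma(0)$ from the middle $0$), to the right one reads $s_2s_3\cdots=\gamma(\tilde v)\,1\overline{1}\cdots$, and to the left $s_{-2}s_{-3}\cdots$ reads $\gamma(v)$ backwards and then a $0$ (from the initial $0$). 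The essential bookkeeping step will be that reading $\gamma(v)$ backwards matches $\gamma(\tilde v)$ position by position in a cancelling way: a letter $0$ opposite a letter $0$, and a block $1\overline{1}$ opposite a block $\overline{1}1$, so the two opposite entries always sum to $0$. Hence the folded sequence $(\psi_h)$ of Lemma \ref{lex} satisfies $\psi_0=1$, $\psi_1=-1$, $\psi_h=0$ for $2\le h\le|\gamma(v)|+1$, and $\psi_{|\gamma(v)|+2}=1$ (the first letter $1$ of the final block $\gamma(1)$ against the $0$ coming from the initial $0$ of $u$). Comparing with a bi-infinite sequence $\mathbf y'$ realising $0^{\infty}1.\overline{1}0^{\infty}$, whose folded sequence is $(1,-1,0,0,\dots)$ and whose $(\cdot)_\alpha$-value is $1/(1+\alpha)$ by (\ref{Eqn:10}), the two folded sequences agree up to index $|\gamma(v)|+1$ while $\psi_{|\gamma(v)|+2}=1>0$, and $|\psi_n|\le 2$ throughout; this is exactly $\mathbf y\gg\mathbf y'$, so Lemma \ref{lex} yields $(\mathbf y)_\alpha>1/(1+\alpha)$.

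Since $\mathbf y=\sigma^c((s_n))$ for the distinguished index $c$ of the occurrence, (\ref{new1}) gives $(\mathbf y)_\alpha=\varepsilon(\xi\alpha^c)$, whence $\|\xi\alpha^c\|>1/(1+\alpha)$; moreover the estimate in the proof of Lemma \ref{lex} bounds it below by $1/(1+\alpha)+\alpha^{-(|\gamma(v)|+2)}(\alpha-\alpha_2)^{-1}\bigl(1-2/(\alpha-1)\bigr)$, a positive quantity depending only on $v$ (using $\alpha>3$). Letting the occurrence range over the infinitely many copies of $\gamma(u)$ then forces $\limsup_{n\to\infty}\|\xi\alpha^n\|>1/(1+\alpha)$, contradicting (\ref{eqn:1}); so no word of $F$ occurs in $(x_n)$. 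The word $1\tilde v10v0$ is handled by the mirror of this computation (centering at the $\overline{1}$ of the block $\gamma(1)$ coming from its first $1$ makes the folded sequence $\ll$ that of $-1/(1+\alpha)$, so $(\mathbf y)_\alpha<-1/(1+\alpha)$ and $\|\xi\alpha^c\|>1/(1+\alpha)$ again). The one point that needs care, and the only content beyond routine verification, is this alignment argument: that $\gamma$ and its mirror produce equal-length blocks that cancel position-wise, so that all intermediate $\psi_h$ vanish and the first discrepancy with the reference sequence is pinned exactly at the $0$-versus-$1$ asymmetry that defines $F$.
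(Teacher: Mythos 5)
Your proof is correct and takes essentially the same route as the paper's: reduce to one element of $F$ by a symmetry of $g$, note that the mirror of $\gamma(v)$ cancels $\gamma(\widetilde{v})$ position by position under the folding $\psi$, so the folded sequence agrees with that of $0^{\infty}1.\overline{1}0^{\infty}$ up to index $|\gamma(v)|+1$ and first deviates upward at index $|\gamma(v)|+2$, whence Lemma \ref{lex} forces a value exceeding $1/(1+\alpha)$ --- this is exactly the paper's identity $|(0\gamma(\widetilde{v})01.\overline{1}\gamma(v)1)_{\alpha}|=(00^{\rho+1}1.\overline{1}0^{\rho}1)_{\alpha}$. The only cosmetic difference is that the paper gets its contradiction from a single occurrence inside the limsup word $(w_n)$, all of whose entries are bounded by $1$ so the hypothesis $|\psi_n(\y)-\psi_n(\y')|\le 2$ of $\gg$ holds automatically, whereas you transfer back to $(s_n)$ and use infinitely many occurrences with a uniform gap --- which also closes, provided the occurrences are taken far enough to the right that the finitely many entries $s_j$ with $|s_j|>1$ contribute less than that gap.
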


\begin{proof}
From $g((s_n))=g((s_{-n}))$, it is enough to show that $0v01\tilde{v}1$ is forbidden.
Assume that \(0\widetilde{v}01v1\) is a subword of the limsup word  $(x_n)$. 
Then the word 
\begin{eqnarray*}
0\gamma(\widetilde{v}) 0 1 \overline{1}  \gamma(v)1\overline{1}
\end{eqnarray*}
appears in $(w_n)$. 
Then
\begin{eqnarray*}
|(0\gamma(\widetilde{v}) 0 1. \overline{1}  \gamma(v)
1)_{\alpha}|=
(00^{\rho+1}1.\overline{1}0^{\rho}1)_{\alpha},
\end{eqnarray*}
where $\rho=|\gamma(\widetilde{v})|$.
Hence, 
$$
(\overline{1}^{\infty}0^{\rho+1}01.\overline{1}0^{\rho}1\overline{1}^{\infty})_{\alpha}
>(01.\overline{1})_{\alpha}=\frac{1}{\alpha+1},
$$
a contradiction. 
\end{proof}

In summary, if $\eta=\limsup_{n\to \infty} \| \xi \alpha^n\|\le 1/(1+\alpha)$, then 
the limsup word $(w_n)\in \B^{\Z}$ corresponding to $\xi$
must have a preimage $\x=(x_n)\in \A^{\Z}$ by $\gamma$ or $\tilde{\gamma}$.
Moreover, using the proof of Lemma \ref{localbalance}, we see that if 
$\eta<1/(1+\alpha)$ then there exists  $m\in \N$ satisfying
$\sigma^m((s_n)_{n\in \N})\in \gamma(\Y_F)$.

By Theorem \ref{Balanced}, $(x_n)$ is balanced. We say that a balanced word $(x_n)$ is 
{\bf symmetric}, if it is a mechanical word of intercept $0$, i.e., 
$x_0x_1\in \{01,10\}$ and $x_{n}=x_{-n+1}$ for $n\ge 2$. 
Then we have a

\begin{lem}
\label{Small}
If $(x_n)$ is balanced, then 
$|g(\gamma((x_n))|\le 1/(1+\alpha)$ and $|g(\tilde{\gamma}((x_n))|\le 1/(1+\alpha)$
holds. The equality holds if and only if $(x_n)$ is symmetric.
\end{lem}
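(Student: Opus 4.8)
The plan is to analyze the quantity $g(\gamma(\x))$ directly via the explicit series representation \eqref{Eqn:3}. Recall that if $(w_n)=\gamma(\x)$ with $\x=(x_n)\in\A^{\Z}$ balanced, then each $1$ of $\x$ becomes the block $1\overline{1}$ and each $0$ becomes $0$, so $(w_n)$ is a bi-infinite word over $\{0,1,\overline{1}\}$ in which the nonzero symbols come in alternating $+1,-1$ pairs. Writing $g((w_n))=\frac{1}{\alpha-\alpha_2}\sum_{q\in\Z} w_q\alpha^{-q}$ (with the indexing convention of \eqref{Eqn:3}), I would group the terms according to the blocks coming from the letters of $\x$: a letter $1$ at position contributing exponents $-q$ and $-(q+1)$ yields a contribution $\alpha^{-q}-\alpha^{-q-1}=\alpha^{-q}(1-\alpha^{-1})>0$, while a letter $0$ contributes nothing. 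Thus $g(\gamma(\x))$ is, up to the positive factor $\frac{1}{\alpha-\alpha_2}$, a sum of positive terms $\alpha^{-q_i}(1-\alpha^{-1})$ indexed by the positions $q_i$ of the $1$'s of $\x$. The same computation, with $\widetilde\gamma$, gives a sum over positions of $1$'s with each $1$ producing $-\alpha^{-q}+\alpha^{-q-1}=-\alpha^{-q}(1-\alpha^{-1})$, so $|g(\widetilde\gamma(\x))|$ is the same type of sum. Hence it suffices to treat the $\gamma$ case and bound $\sum_i \alpha^{-q_i}$.

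The key step is then an extremal/monotonicity argument on balanced words. The claim $|g(\gamma(\x))|\le 1/(1+\alpha)$ is equivalent, after dividing by the constant $(1-\alpha^{-1})/(\alpha-\alpha_2)$ and using $\alpha-\alpha_2=\alpha-\alpha^{-1}=(1-\alpha^{-2})\alpha$, to a clean inequality of the form $\sum_i \alpha^{-q_i}\le$ (the value attained by the symmetric word). The plan here is to compare $\x$ with the symmetric mechanical word $\x^{*}$ of the same slope $\alpha(\x)$: since both are mechanical of the same irrational (or rational) slope, they differ only by intercept, and I would show that among all balanced words of a given slope, the symmetric one (intercept $0$, the lower/upper Christoffel case) maximizes the weighted sum $\sum_i\alpha^{-q_i}$ where position $0$ is the "center." Concretely, I expect to use the exchange-type lemma already prepared in the paper: the order $\gg$ from Lemma~\ref{lex} and the identities $\frac1{\alpha+1}=(01.\overline1)_{\alpha}=(\overline11.0)_{\alpha}$ from \eqref{Eqn:10}. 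The inequality $g(\gamma(\x))\le 1/(1+\alpha)$ should follow by showing that if $(w_n)=\gamma(\x)$ were to exceed $(01.\overline1)_{\alpha}$, then by Lemma~\ref{lex} applied in the $\gg$ order (comparing the $\psi$-values of the relevant shift of $(w_n)$ against those of $0^{\infty}1.\overline1 0^{\infty}$) one would force, around the maximizing index, a configuration incompatible with balancedness of $\x$ — essentially the same mechanism as in Lemmas~\ref{lem:3} and \ref{localbalance}.

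For the equality case I would argue both directions. If $\x$ is symmetric, then by construction the central block of $(w_n)=\gamma(\x)$ (or $\widetilde\gamma(\x)$) read around the appropriate origin is exactly $\dots 0\,1.\overline1\,0\dots$ with all further symmetric digits contributing to realize the value $(01.\overline1)_{\alpha}=1/(1+\alpha)$ at that index; here I would invoke Remark~\ref{rem:ch} / Lemma~\ref{Maxiota} on the maximal symmetric factor $\tilde v\,ab\,v$ of a Christoffel periodic word and the arbitrary-large symmetric factors $\iota(\x)=\infty$ of a sturmian word (Proposition~\ref{Iota}), so that the limsup over shifts of $|g(\sigma^k(w_n))|$ actually attains $1/(1+\alpha)$. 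Conversely, if equality holds, then there is a shift whose $g$-value equals $(01.\overline1)_{\alpha}$ exactly; comparing via $\gg$ and Lemma~\ref{lex}, exact equality in the series forces the digits to match those of $0^{\infty}1.\overline1 0^{\infty}$ in a nested way that, pulling back through $\gamma$, makes $\x$ satisfy $x_n=x_{-n+1}$ for all $n\ge 2$ with $x_0x_1\in\{01,10\}$, i.e. $\x$ is symmetric.

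The main obstacle I anticipate is the equality-case analysis, and more precisely making rigorous that equality in an infinite series of the form $\sum \pm\alpha^{-q_i}$ against a fixed target forces a rigid digit pattern: one has to rule out "carry"-type cancellations where a different, non-symmetric arrangement of $1$'s and $0$'s in $\x$ happens to produce the same real value. The way around this is to use that the relevant comparison is not between arbitrary digit strings but between \emph{balanced} words, whose structure (via the map $\phi$ and the continued-fraction coding) is rigid enough that the only ties are the symmetric ones; the $\gg$-order and Lemma~\ref{lex} are tailored precisely so that $\psi$-values, not raw digits, are compared, and the bound $|\psi_n(\mathbf y)-\psi_n(\mathbf y')|\le 2$ in its hypothesis is exactly what makes the first point of disagreement decisive. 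So the real work is to verify that $\gamma(\x)$ (resp. $\widetilde\gamma(\x)$) and $\gamma(\x^{*})$ land in the regime where Lemma~\ref{lex} applies, which I expect to be a short but careful check.
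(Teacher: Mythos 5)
Your first-paragraph reduction is wrong, and it is the step you declare the rest of the argument rests on. In the representation \eqref{Eqn:3} the negative-index digits are weighted by powers of $\alpha_2$, so the series is $\sum_{q\ge 0}w_q\alpha^{-q}+\sum_{q<0}w_q\alpha_2^{-q}$; the expression $\sum_{q\in\Z}w_q\alpha^{-q}$ you write does not even converge. With the correct weights, a block $1\overline{1}$ of $\gamma(\x)$ occupying positions $q,q+1$ with $q+1\le 0$ contributes $\alpha^{q}-\alpha^{q+1}=-\alpha^{q}(\alpha-1)<0$. Hence $g(\gamma(\x))$ is emphatically \emph{not} a sum of positive terms $\alpha^{-q_i}(1-\alpha^{-1})$ over the positions of the $1$'s of $\x$, and the proposed reduction to bounding $\sum_i\alpha^{-q_i}$ — as well as the comparison with a symmetric word of the same slope — collapses. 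The entire content of the lemma is the cancellation between the positive right-half contributions and the negative left-half contributions, which is exactly what the quantities $\psi_n=w_n+w_{-n}$ of Lemma \ref{lex} measure: one has $g((w_n))=\frac1{\alpha-\alpha_2}\sum_{n\ge0}\psi_n\alpha^{-n}$, and the target $1/(1+\alpha)=(01.\overline{1})_\alpha$ corresponds to $\psi_0=1$, $\psi_1=-1$, $\psi_n=0$ for $n\ge2$.

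Your second paragraph does point at the correct mechanism, which is the paper's: compare $(w_n)$ with $0^{\infty}1.\overline{1}0^{\infty}$ in the order $\gg$, noting first that since $(w_n)\in(0^*+(1\overline{1})^*)^{\Z}$ the central block must be $w_{-1}w_0w_1=01\overline{1}$ or else the value is already strictly smaller. But the decisive step is missing from your sketch: if $m$ is the first index with $w_{m+1}+w_{-m-1}\neq0$, one must show this failure of antisymmetry necessarily pushes the value \emph{down}. This is precisely where balancedness enters: the favourable configuration would exhibit a factor $0v01\tilde{v}1\in F$ in $\x$, which Theorem \ref{Balanced} forbids, so the factor must be $1v01\tilde{v}0$, forcing the first nonzero $\psi_n$ with $n\ge2$ to be negative and hence $g((w_n))<1/(1+\alpha)$ by Lemma \ref{lex}. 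You describe this only as "a configuration incompatible with balancedness" without identifying which configuration or why. Note also that the statement concerns $g$ at the fixed central index, not $\limsup_k|g(\sigma^k((w_n)))|$; so the "maximizing index", Proposition \ref{Iota}, Lemma \ref{Maxiota} and the Christoffel/sturmian apparatus you invoke for the equality case are beside the point here. The "if" direction of the equality statement is the one-line computation $\psi_n=0$ for all $n\ge2$, and the "only if" direction is just the contrapositive of the strict inequality above — no rigidity analysis of infinite series is needed beyond the first index of disagreement.
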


\begin{proof}
We only show the case of $\gamma$.
Since $(w_n)=\gamma((x_n))\in (0^*+(1\overline{1})^*)^{\Z}$, unless $w_{-1}w_0w_1=01\overline{1}$,
we have $g((w_n))<1/(1+\alpha)$. If $w_{-n}+w_n=0$ for all $n>1$, then $g((w_n))=1/(1+\alpha)$.
Otherwise, there exists $n\in \N$ that $w_{-n}+w_n\neq 0$. Then we can find $v\in \A^*$ that 
$\gamma(v01\tilde{v})=w_{-m}\dots w_{-2}01.\overline{1} w_2\dots w_m$ with $w_{n}+w_{-n}=0$ 
for
$n\le m$ but $w_{m+1}+w_{-m-1}\neq 0$. 
Since $(x_n)\in \X_F$, $\gamma(0v01\tilde{v}1)$ does not 
show, it must be the image of $\gamma(1v01\tilde{v}0)$. Therefore $w_{-m-2}=-1$ and $w_{m+2}=0$
gives $g((w_n))<1/(1+\alpha)$.
\end{proof}
The proof above also shows that if $\iota(\x)<\infty$, then $$
\sup_{n\in \Z}|g(\sigma^n(w_n))|<\frac 1{1+\alpha}.
$$
We claim that $\eta<1/(1+\alpha)$ implies $\iota(\x)<\infty$. 
In fact, if $\eta<1/(1+\alpha)$ and $\iota(\x)=\infty$, then we may assume that for any $n\in \N$, there
is a word $v\in \A^+$ with $|v|=n$ that $\tilde{v}01v$ 
is a factor of $\x$ (the proof is similar for $\tilde{v}10v$).
This would imply 
$\gamma(\tilde{v})01\overline{1}\gamma(v)$ (or
$\widetilde{\gamma}(\tilde{v})0\overline{1}1\widetilde{\gamma}(v)$)
is a factor of the limsup word $(w_n)$. 
If $\gamma(\tilde{v})01\overline{1}\gamma(v)$ is a factor, then
\begin{equation}
\label{Approach}
\left|(\dots\gamma(\tilde{v})01.\overline{1}\gamma(v)\dots)_\alpha - \frac 1{1+\alpha}\right|\le 
\frac {2\alpha^{-n-2}}{1-\alpha},
\end{equation}
which yields a contradiction if $n$ is sufficiently large. 
The case $\widetilde{\gamma}(\tilde{v})0\overline{1}1\widetilde{\gamma}(v)$ is similar.
Thus, we proved the claim. 
By Proposition \ref{Iota} we see $\eta<1/(1+\alpha)$ implies $(x_n)$ is purely periodic.
Therefore there exists a Christoffel word $\nu\in \A^*$ that $(x_n)\simeq \nu^{\Z}$ 
and $(w_n)\simeq \gamma(\nu)^{\Z}$.
Here $\x\simeq \y$ means there exists $n\in \Z$ that $\x=\sigma^n(\y)$.
Since $\sigma^m((s_n)_{n\in \N})\in \gamma(\Y_F)$ for some $m\in \N$ 
when $\eta<1/(1+\alpha)$, this implies that $(s_n)$ and $(w_n)$
share their tails up to some shift, i.e., there exist $m_1,m_2\in \N$ that 
$\sigma^{m_1}((s_n)_{n\in \N})=\sigma^{m_2}((w_n)_{n\in \N})$.
Consequently we have $$\limsup_{n\to \infty} \|\xi \alpha^n\|=
\limsup_{n\to \infty} |g(\sigma^n((w_n))|.$$
Therefore $\LL(\alpha)\cap [0,1/(1+\alpha))$ is completely described by Christoffel words.
By Lemma \ref{Maxiota}, we evaluate
the value $g((w_n))$ for $(w_n)\simeq \gamma(\nu^{\Z})$ that the letter at index $0$ of 
$\x=\nu^{\Z}$ is the last letter of the Christoffel word $\nu$, 
and $w_{-1}w_0w_1=01\overline{1}$ (or $1\overline{1}0$) for $|\nu|\ge 2$,
which is enough to obtain $\limsup_{n} |g(\sigma^n((w_n)))|$. 
In fact, the computation below shows that the limit superior depends only on $\iota(\x)$
because any central word is a palindrome.
If $|\nu|=1$, then 
$$
|(0^{\Z})_{\alpha}|=0
$$
and
$$
|((1\overline{1})^{\Z})_{\alpha}|=\frac{1}{\alpha-\alpha_2} \left(
1+\frac{-2\alpha^{-1}+2\alpha^{-2}}{1-\alpha^{-2}} \right)
=\frac 1{b+2}.
$$
By recalling 
$s_{-n}(\alpha;\xi)=0$ for sufficiently large $n$, the tail $0^{\N}$ occurs if and only if
$(s_n)$ ends up $0^{\infty}$ in both directions, i.e., 
$$
\xi\in X_0:=\frac{1}{\alpha-\alpha_2} \Z[\alpha]
$$
and tails $(1\overline{1})^{\N}$ or $(\overline{1}1)^{\N}$ occur if and only if
$$
\xi\in X_1:=\pm \frac{1}{\alpha-\alpha_2} \left(\frac{1-\alpha^{-1}}{1-\alpha^{-2}}+\Z[\alpha] \right).
$$
If $|\nu|>1$, then by Lemma \ref{Central}, there exists a palindrome $v\in \A^*$ that $\nu=0v1$ or $1v0$. 
Consider the case $\nu=1v0$.
Since $\gamma((1v0)^{\Z})=(1\overline{1}\gamma(v)0)^{\Z}$, we have
\begin{eqnarray*}
&&((01\overline{1}\gamma(v))^{\infty}01.\overline{1}(\gamma(v)01\overline{1})^{\infty})_{\alpha}\\
&=& \frac 1{\alpha-\alpha_2}\left( 1-\frac 1{\alpha} + \frac{-\alpha^{-n}+2\alpha^{-n-1}-\alpha^{-n-2}}{1-\alpha^{-n-1}}\right)=:z_n <(0^{\infty}1.\overline{1}0^{\infty})_{\alpha}
\end{eqnarray*}
for $|\gamma(v)|+2=n$. By Lemma \ref{Maxiota}, we see $\iota((1v0)^{\Z})=|v|$
and $\limsup_m \|\xi \alpha^m\|$ attains the value $z_n<1/(1+\alpha)$. 
Since
$$
\frac{1}{z_{n}}-1=\frac {\alpha^{n+2}-1}{\alpha(\alpha^{n}-1)}
$$
is invariant under the Galois conjugation $\alpha\mapsto \alpha_2$, we see $z_n\in \Q$. 
Moreover, using
\begin{equation}
\label{NegCF}
b-\frac {\alpha^{n+2}-1}{\alpha(\alpha^{n}-1)}= \frac{\alpha(\alpha^{n-2}-1)}{\alpha^{n}-1},
\end{equation}
we get that
$$
\frac{1}{z_{2n}}-1=
\frac {\alpha^{2n+2}-1}{\alpha(\alpha^{2n}-1)}=
b-\cfrac{1}{b-\cfrac {1}{b-\cfrac{1}{\ddots}}}=b-[0;\underbrace{b,b,\dots,b}_{n-1}]_{neg},
$$
which proves $z_{2n}=p_{2n}/q_{2n}$ by the statement after Theorem \ref{thm:main}.
Since (\ref{NegCF}) holds for odd $n$ as well,  the proof for $z_{2n-1}$ is similar.
Switching to the case $\nu=0v1$ or the case using the map $\tilde{\gamma}$, we obtain
the same values of $\limsup_m \|\xi \alpha^m\|$ because the absolute value of $g$ does not change. 
By this proof, we see for $n\ge 2$ that
$$
X_n=\left\{ \xi\in \R\ \left|\ \limsup_{m\to \infty} \| \xi \alpha^m\|=z_n \right.\right\}
$$
corresponds to purely periodic balanced words generated by
Christoffel words $\nu$ whose central word $v$ satisfies
$|\gamma(v)|+2=n$. 
This formula is consistent with $n=0,1$ as well.
More explicitly we have
\begin{equation}
\label{Xn}
X_n = \left\{\left. \pm g\left(\sigma^k(\x)\right)+ 
\frac{1}{\alpha-\alpha_2} \Z[\alpha] \ \right|\ \begin{aligned}
&\x=0^{\infty}1.\overline{1}(\gamma(v)01\overline{1})^{\infty},\ 
0\le k\le n \\
&|\gamma(v)|+2=n,\  v:\text{central} \end{aligned}
\right\}
\end{equation}
for $n\ge 2$.
By Lemma \ref{Small}, $X_{\infty}$ contains
$$
\mathcal{Q}:=
\left\{\left. \pm g\left(0^{\infty}.s_{k}s_{k+1}\dots \right) + 
\frac{1}{\alpha-\alpha_2} \Z[\alpha] \ \right|\ 
\begin{aligned}
&(s_n)_{n\in \N}=\gamma(\y),\  k=1,2\\
& \y:\text{sturmian}
\end{aligned}
\right\}.
$$
For a given element of $\mathcal{Q}$, the slope of its 
corresponding sturmian word is uniquely retrieved,
we see $X_{\infty}$ is uncountable.
We claim that $X_{\infty}$ corresponds to 
the set of all eventually balanced aperiodic words, i.e.,
$$
X_{\infty}=
\left\{\pm g\left(0^{\infty}. s_{k}s_{k+1}\dots \right) + \frac{1}{\alpha-\alpha_2} \Z[\alpha]
 \ \left|\ 
\begin{aligned}
&(s_n)_{n\in \N}=\gamma(\y), \ k=1,2\\ 
&\y : \text{not eventually periodic}\\
&\quad \text{ and eventually balanced}
\end{aligned}
\right.\right\}.$$
In fact, if $\xi\in X_{\infty}$, then its corresponding bi-infinite words $(x_n)$
are balanced but may have different slopes, c.f. \cite[Proposition 2]{Yasutomi}. 
However for any $\varepsilon>0$, there exists $m_0$ that if $m\ge m_0$ then
$\|\xi \alpha^m\|<1/(1+\alpha)+\varepsilon$. 
By refining the proof of this section, 
the length of forbidden words in $F$ we may observe in 
$\y=(y_n)$ for $n\ge m$
diverges as $m\to \infty$.
By quantifying Theorem \ref{Balanced}, the minimum length of a word containing an unbalanced pair $0w0$ and $1w1$
in Lemma \ref{Unbalance} must be large if the minimum length of forbidden words in $F$ is large.
Thus the claim follows from (\ref{Approach}).

Since being eventually balanced is a tail event, i.e., invariant by changing a finite number of terms, 
there are many eventually balanced words which is not balanced. We now give two further examples. 

\begin{ex}
Let $(n_i)_{i\in \N}$ be an integer sequence with $\lim_{i\to \infty} n_i=\infty$.
A one-sided infinite word
$$
0^{n_1}10^{n_2}10^{n_3}1\dots
$$
is eventually balanced but not balanced.
\end{ex}


\begin{ex}
Let $\tau$ be the substitution defined by: $\tau(0)=01,\tau(1)=0$. Then the unique fixed point $\tau(w)=w\in \{0,1\}^{\N}$ is
the Fibonacci word, the most famous sturmian word. We consider the one-sided infinite word
$$
x=\tau(0)\tau(0)\tau^2(0)\tau^2(0)\tau^3(0)\tau^3(0)\dots.
$$
Then for any $m\in \N$, $\sigma^m(x)$ is eventually balanced but not balanced. 
In fact, 
$\tau^n(0)\tau^n(0)=\tau^n(00)$, $\tau^n(0)\tau^{n+1}(0)=\tau^n(001)$
is a factor of $w$ and therefore balanced, which implies that $x$ is eventually balanced. 
On the other hand, by considering the frequency of letters, $x$ can not be periodic. 
If $\sigma^m(x)$ were balanced, then $\sigma^m(x)$ is a sturmian word. 
However, since $\tau$ is a sturmian morphism (see \cite{Lothaire:02}),
$\tau^n(0)\tau^{n}(0)\tau^{n+1}(0)\tau^{n+1}(0)\dots$ is sturmian implies $00x$ is balanced
but has a forbidden prefix $000101\in F$.
\end{ex}

Therefore $\mathcal{Q}$ is a proper subset of $X_{\infty}$ and the difference 
$X_{\infty}\setminus \mathcal{Q}$ is uncountable.
Moreover there are eventually balanced words whose frequency of $1$ does not converge, see 
\cite[Proposition 2]{Yasutomi}.




\subsection{Proof of Theorem \ref{thm:main2}}
We prove Theorem \ref{thm:main2} in a similar way as in the proof of Theorem \ref{thm:main}.
Our idea is to reduce the problem on $\limsup_{n\to \infty} \|\xi \alpha^n\|$ to
the one on $\limsup_{n\to \infty} \|\xi \alpha^{2n}\|$ and obtain the conclusion because 
$0<\alpha_2^2<1$.
Assume that there exists $\xi\in \R$ satisfying 
\begin{align*}
\limsup_{n\to\infty}\|\xi\alpha^n\|<\frac{b}{\alpha^2+1}=(1.0\overline{1})_{\alpha}, 
\end{align*}
and select a limsup word $(w_n)$ for $\xi$.
In what follows, we find forbidden subwords of $(w_n)$. 
We easily see that the words in 
\[\mathcal{S}_1=\{\overline{1}11, 011, \overline{1}10, 1\overline{1}\hspace{0.4mm}\overline{1}, 
0\overline{1}\hspace{0.4mm}\overline{1}, 1 \overline{1}0\}.\]
are forbidden. 
For instance, we see $\dots s_{n-2}\overline{1}1.0s_{n+1}\ldots \gg 0^{\infty}1.0\overline{1}0^{\infty}$
for any $s_n\in \{-1,0,1\}$, which shows $\overline{1}10$ does not appear in $(w_n)$.
For brevity, we write this reasoning as
$$
\overline{1}1.0\gg 0^{\infty}1.0\overline{1}0^{\infty}.
$$
%
Next, we verify that the words in 
\[\mathcal{S}_2=\{11, 1\overline{1}, \overline{1}1, \overline{1}\hspace{0.4mm}\overline{1}\}\]
are forbidden. 
In fact, the letter prepends to $11$ must be $1$ because any word in $\mathcal{S}_1$ is forbidden. Thus, $11$ is forbidden by  
$111.11\gg 0^{\infty}001.0\overline{1}0^{\infty}.$
Similarly $1\overline{1}$ must be followed by $1$, and $\overline{1}1$ must be followed by $\overline{1}$. 
Thus, each of $1\overline{1}$ and $\overline{1}1$ is forbidden by 
$1\overline{1}1.\overline{1}1 \gg 0^{\infty}001.0\overline{1}0^{\infty}.$
Thirdly we show that
\[\mathcal{S}_3:=\{101, \overline{1}0\overline{1}, 00100,00\overline{1}00\}\]
is a set of forbidden words because we have $01.01 \gg 0^{\infty}001.0\overline{1}0^{\infty}$
and $00100\gg 0^{\infty}001.0\overline{1}0^{\infty}$.

Fourthly, we see
\[\mathcal{S}_4:=\{1001,100\overline{1}, \overline{1}001, \overline{1}00\overline{1}\}\]
is a set of forbidden words, which follows from
$0\overline{1}01.001\gg 0^{\infty}0\overline{1}01.0000^{\infty}$
and
$
\overline{1}001.0\overline{1}0 \gg
0^{\infty}0001.0\overline{1}00^{\infty}$. 
\begin{lem}\label{lem:2-1}
For any $k\geq 0$, the words $0(010\overline{1})^k0100$ and
$0(0\overline{1}01)^k0\overline{1}00$ are forbidden.
\end{lem}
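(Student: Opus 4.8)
The proof goes by induction on $k$, continuing the analysis of forbidden factors of a limsup word begun in the preceding paragraphs. Throughout I fix a limsup word $(w_n)_{n\in\Z}$ attached to some $\xi$ with $\eta=\limsup_{n}\|\xi\alpha^n\|<\frac{b}{\alpha^2+1}=(1.0\overline{1})_\alpha$; recall from Remark~\ref{rem:limsup} that every finite factor of $(w_n)$ occurs infinitely often in $(s_n)=(s_n(\alpha;\xi))$, and that $|g(\sigma^m((w_n)))|\le\eta$ for every $m$, so that Lemma~\ref{lex} forbids any factor which, once re-centred at a suitable coordinate, produces a $\psi$-sequence lying strictly above the $\psi$-sequence $(1,0,-1,0,0,\dots)$ of $(1.0\overline{1})_\alpha$, or strictly below the $\psi$-sequence $(-1,0,1,0,0,\dots)$ of its negative $(\overline{1}.01)_\alpha=-\frac{b}{\alpha^2+1}$. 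Since $0(0\overline{1}01)^k0\overline{1}00$ is the letter-wise negation of $0(010\overline{1})^k0100$ and $g((-s_n))=-g((s_n))$, it suffices to show that $0(010\overline{1})^k0100$ is forbidden.

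For $k=0$ the word is $00100$, which belongs to $\mathcal{S}_3$ and is therefore already forbidden. For the inductive step, assume $0(010\overline{1})^k0100$ is forbidden and suppose, for a contradiction, that $W=0(010\overline{1})^{k+1}0100$ is a factor of $(w_n)$. Re-centre $(w_n)$ so that the $1$ of the trailing block $0100$ of $W$ sits at coordinate $0$; then reading the prefix $0(010\overline{1})^{k+1}$ of $W$ backwards supplies the values $w_1=0,\;w_2=\overline{1},\;w_3=0,\;w_4=1,\;w_5=0,\;w_6=\overline{1},\dots$ as far as $(010\overline{1})^{k+1}$ reaches, while on the right only $w_{-1}=w_{-2}=0$ is given. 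I then propagate forced digits to the right of $W$, alternating two mechanisms: (a) the already-forbidden words of $\mathcal{S}_1$--$\mathcal{S}_4$ (for instance $100x$ and $\overline{1}00x$ force $x=0$, $00\overline{1}x$ forces $x=0$, and $0\overline{1}0x$ rules out $x=\overline{1}$); and (b) Lemma~\ref{lex} for the current or a shifted centre, which at the positions facing a left-digit $1$ of a block forces the matching right digit to be $\overline{1}$, since otherwise the corresponding $\psi$-sequence would overshoot that of $(1.0\overline{1})_\alpha$. Carrying this out for all $k+1$ blocks eventually forces a factor of the shape $000\overline{1}000$ just past $W$; re-centring at its middle $\overline{1}$ gives a $\psi$-sequence beginning $(-1,0,0,\dots)$, which lies strictly below $(-1,0,1,0,\dots)$, so by Lemma~\ref{lex} that shift has modulus exceeding $\frac{b}{\alpha^2+1}>\eta$ --- the desired contradiction. (For the smallest values of $k$, where $(010\overline{1})^{k+1}$ is too short for the propagation to run its full course, one reaches the same configuration $000\overline{1}000$, or directly a word of $\mathcal{S}_1$--$\mathcal{S}_4$, after a short explicit check.)

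\noindent\textbf{Main obstacle.} Everything hinges on the digit bookkeeping in the inductive step: one must track the first coordinate at which the running $\psi$-sequence disagrees with the comparison sequence, confirm the sign of that disagreement and the estimate $|\psi_n-\psi_n'|\le 2$ beyond it (both immediate once $\alpha>3$ and every digit lies in $\{-1,0,1\}$), and --- the genuinely delicate point --- handle the branching at the positions where mechanism (b) is inconclusive, verifying that $\mathcal{S}_1$--$\mathcal{S}_3$ together with a fresh re-centring against $-\frac{b}{\alpha^2+1}$ force every branch back into either a known forbidden word or the configuration $000\overline{1}000$.
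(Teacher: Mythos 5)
Your skeleton matches the paper's: induction on $k$, reduction to one of the two words via the negation symmetry $g((-s_n))=-g((s_n))$, and digit\nobreakdash-by\nobreakdash-digit forcing of the prolongation of $W$ by playing Lemma~\ref{lex} against the previously established forbidden words. (The paper propagates to the \emph{left} from the first $1$ of $W$; you propagate to the \emph{right} from the last $1$, which is a harmless mirror image.) The gap is in where the propagation ends. At the positions facing a left digit $1$ you correctly force $\overline{1}$ via $\psi_{2j}=1+w_{-2j}\le 0$. But at the positions facing a left digit $\overline{1}$ the comparison gives $\psi_{2j}=-1+w_{-2j}\le 0$ automatically, and what actually happens there is that the value is forced to be $+1$: the value $\overline{1}$ is excluded by $\overline{1}0\overline{1}\in\mathcal{S}_3$ and the value $0$ by $00\overline{1}00\in\mathcal{S}_3$. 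Your list of mechanisms omits this, so your propagation leaves $0$'s at those positions and drifts toward $000\overline{1}000$ --- a configuration that cannot arise, and that in any case already contains the forbidden subword $00\overline{1}00$, so the concluding re-centring against $-b/(\alpha^2+1)$ would be superfluous even if it did.

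Carried out correctly, the forced right prolongation of $W=0(010\overline{1})^{k+1}0100$ is $0\overline{1}010\overline{1}01\dots=(0\overline{1}01)^{k}0\overline{1}0$, and the next letter must then be $0$ (it cannot be $1$ by the $\psi$\nobreakdash-comparison, nor $\overline{1}$ because of $\overline{1}0\overline{1}$); together with the final letter of $W$ this spells $0(0\overline{1}01)^{k}0\overline{1}00$, which is precisely the negated word of the \emph{induction hypothesis}. That is where the contradiction lives. This also explains the structural red flag in your write-up: your inductive step never invokes the inductive hypothesis, which is the tell-tale sign that the terminal configuration has been misidentified. (The paper's version is the mirror image of this: the forced left prolongation is $s(0\overline{1}01)^{k}0\overline{1}=s0\overline{1}(010\overline{1})^{k}$, the induction hypothesis rules out $s=0$, and the fully determined pattern is then $\gg(1.0\overline{1})_\alpha$ by Lemma~\ref{lex}.)
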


\begin{proof} The case $k=0$ is in $S_3$. Assume that the statement is valid for $k\le n$
and $w=0(010\overline{1})^{n+1}0100$ is a factor of the limsup word. Since
$$
\dots 001.0\overline{1}(010\overline{1})^{n}0100\ldots \gg 0^{\infty}1.0\overline{1}0^{\infty}
$$
is not allowed, we see that $w$ must be inductively 
prepended by $s0\overline{1}(010\overline{1})^{n}$ with $s=0$ or $1$. 
Using the induction assumption, we get $s=1$. Thus, we obtain
$$
10\overline{1}(010\overline{1})^{n}001.0\overline{1}(010\overline{1})^{n}0100
\gg 0^{\infty}1.0\overline{1}0^{\infty},
$$
a contradiction.
\end{proof}

Let $\mathcal{S}_5$ be the set consisted of words of the form 
\begin{align*}
\begin{cases}
10^k1, \quad, \overline{1} 0^k \overline{1} & (k\geq 0), \\
\overline{1}0^{2l}1 , \quad 1 0^{2l} \hspace{0.2mm}\overline{1} & (l\geq 0).
\end{cases}
\end{align*}
\begin{lem}\label{lem:2-3}
$S_5$ is a set of forbidden words.
\end{lem}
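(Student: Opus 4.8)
The plan is to mimic the inductive pattern already used for the families $10^k1$, $\overline 1 0^k \overline 1$ in Theorem \ref{thm:main} and for the words in Lemma \ref{lem:2-1}, now for the longer runs of $0$'s that are specific to the case $\alpha_2<0$. First I would dispose of the base cases: for $k=0$ the words $11$, $\overline 1\hspace{0.4mm}\overline 1$ lie in $\mathcal S_2$; for $l=0$ the words $\overline 1 1$, $1\overline 1$ also lie in $\mathcal S_2$; and $101$, $\overline 1 0 \overline 1$ lie in $\mathcal S_3$, while $1001$, $\overline 1 00 \overline 1$, $100\overline 1$, $\overline 1 001$ lie in $\mathcal S_4$. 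So the two listed families are forbidden for small exponents, and I only need an induction that pushes the exponent upward.

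Next I would run a simultaneous induction on the exponent, assuming that all words in $\mathcal S_5$ with strictly smaller run-lengths are already known to be forbidden, together with the previously established forbidden words (those of $\mathcal S_1,\dots,\mathcal S_4$ and Lemma \ref{lem:2-1}). Suppose $10^{t}1$ occurs in the limsup word $(w_n)$ for some $t\ge 2$ (the case $\overline 1 0^t\overline 1$ being symmetric). Since
$$
\dots 10^{t}1.0 s_{n-2}\dots \gg 0^{\infty}1.0\overline 1 0^{\infty}
$$
would contradict Lemma \ref{lex} and \eqref{eqn:1}-type inequality $\limsup\|\xi\alpha^n\|<b/(\alpha^2+1)$, the digits immediately after this occurrence are forced: the relation forces $s_{n-1}\ne 0$ in a direction that, combined with the smaller forbidden words, cascades back and eventually produces a factor of the form $10^{t'}1$ with $t'<t$ (or one of the already-forbidden short words), a contradiction. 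The parity-restricted family $\overline 1 0^{2l}1$, $1 0^{2l}\overline 1$ is handled the same way: here the key point is that the sign pattern of the coding in base $\alpha$ with $\alpha_2=-1/\alpha$ alternates, so an odd gap between $1$ and $\overline 1$ can be tolerated but an even gap of length $2l$ forces, via the comparison $\gg$ with $0^{\infty}1.0\overline 1 0^{\infty}$ (or its reflection), an intermediate shorter forbidden word. I would write out explicitly the one or two comparison inequalities of the shape
$$
1 0^{2l}\overline 1 .\,\ldots \gg 0^{\infty}1.0\overline 1 0^{\infty}
\quad\text{or}\quad
0 0^{2l}\overline 1 0 0 \ldots \gg 0^{\infty}1.0\overline 1 0^{\infty},
$$
checking them by the estimate $1-2\sum_{h\ge 1}\alpha^{-h}>0$ valid since $\alpha>3$, exactly as in Lemma \ref{lex}.

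The main obstacle I expect is bookkeeping the parity constraint correctly: because $\psi_n(\mathbf y)$ involves $(-1)^n y_n+y_{-n}$ when $\alpha_2=-\alpha^{-1}$, the comparison $\gg$ is sensitive to whether the run of zeros has even or odd length, and one must be careful that the inductive step actually decreases the relevant exponent while respecting parity (i.e. an even-gap word reduces to a shorter even-gap word or to an element of $\mathcal S_3\cup\mathcal S_4$, and a word $10^k1$ reduces within its own family). A secondary nuisance is that several sub-cases ($1$ versus $\overline 1$ on each side, and the position of the "decimal point" relative to the run) must each be checked, but all are routine once the correct comparison word $0^{\infty}1.0\overline 1 0^{\infty}$ (the base-$\alpha$ expansion of $b/(\alpha^2+1)$ from \eqref{eqn:basic2}) is used, together with the sign symmetry $g((-s_n))=-g((s_n))$ to halve the work.
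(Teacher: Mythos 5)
Your plan coincides with the paper's proof: the same base cases drawn from $\mathcal{S}_2$--$\mathcal{S}_4$, the same simultaneous induction on the run length, the same comparison via $\gg$ against $0^{\infty}1.0\overline{1}0^{\infty}$ using Lemma \ref{lex}, and the same parity split coming from the $(-1)^n$ in $\psi_n$ (the paper treats $k$ odd by forcing the continuation $0\overline{1}0^{k-2}\overline{1}$, landing on the shorter forbidden word $\overline{1}0^{k-2}\overline{1}$, and treats $k$ even and the mixed-sign family $\overline{1}0^{2l}1$ by a direct contradiction). The only cosmetic imprecision is that your cascade ends at the negated family member $\overline{1}0^{k-2}\overline{1}$ rather than at a word $10^{t'}1$, but since both families are carried together in your induction over $\mathcal{S}_5$ this changes nothing.
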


\begin{proof}
The statement is already shown for $k=0,1,2$ and $\ell=0,1$. Assume that
it is proved for $k, 2l \le n$. When $k=n+1$ is odd, since
\begin{equation}
\label{gg}
10^{k}1.0\overline{1}\ldots \gg 0^{\infty}0^{k+1}1.0\overline{1}0^{\infty}
\end{equation}
is not possible, $10^{k}1.0\overline{1}$ must be followed by $0^{k-2}\overline{1}$. 
Then we reach a forbidden word $\overline{1}0^{k-2}\overline{1}$ by induction assumption.
If $k=n+1$ is even, then the proof is simpler. 
$10^{k}1.0\overline{1}$ must be followed by $0^{k-2}s$ with $s\in \A$
but (\ref{gg}) holds regardless of the choice of $s$.
The same simpler reasoning applies to $\overline{1}0^{2l+2}1$ with $2l+2>n\ge 2l$.
\end{proof}

Considering all these forbidden words, we see that the limsup word $(w_n)$ belongs to
$((00)^*+(010\overline{1})^*)^{\Z}$ or 
$((00)^*+(0\overline{1}01)^*)^{\Z}$.
Thus we have $w_{2n-1}=0$ for $n\in \Z$ (or 
$w_{2n}=0$ for $n\in \Z$) and the problem is reduced to $\limsup_{n\to \infty} \| \xi \alpha^{2n}\|$.
The remaining proof is similar to Theorem \ref{thm:main}, by substituting $\alpha$ by $\alpha^2$. 

\section{$\LL(\alpha)$ contains an interval: quadratic unit case}
\label{Interval}
Define
the map $T$ from $[-1/2,1/2)$ to itself by
$$
T:x\mapsto \alpha x - \lfloor \alpha x+1/2\rfloor.
$$
Set $\D=(-(\alpha+1)/2,(\alpha+1)/2)\cap \Z$.
We define the 
coding map $d$ from $[-1/2,1/2)$ to $\D^{\N}$ by 
$d(x)=d_1d_2\dots\in \D^{\N}$, where $d_i=\lfloor \alpha T^{i-1}(x)+1/2\rfloor$. Note that 
$$x=\sum_{i=1}^{\infty} \frac{d_i}{\alpha^{i}}.$$
This expression is called {\bf symmetric beta expansion}
and studied in \cite{Akiyama-Scheicher:04}.
An infinite sequence $(d_i)_{i\in \N}\in \D^{\N}$ is realized as a symmetric beta 
expansion if and only if
$$
d\left(-\frac 12 \right)\lxe \sigma^k ((d_i)_{i\in \N}) \lx d\left(\frac 12\right)
$$
for all $k\in \{0\} \hspace{0.5mm} \cup \hspace{0.5mm} \N$ where $\lxe$ and $\lx$ are the natural lexicographical orders. 
Put $c=\lfloor (\alpha+1)/2 \rfloor$.   
It is useful to extend the domain of $T$ to $[-(c+1/2)/\alpha, (c+1/2)/\alpha)$. We obtain
an expansion in the same digits $\D$. 
If $x\in [1/2, (c+1/2)/\alpha)$, 
then the first digit is $c$ and $T(x)=\alpha x-c \in [-1/2,1/2)$ and 
if $x\in [-(c+1/2)/\alpha,-1/2]$, 
then the first digit is $\overline{c}$ and $T(x)=\alpha x+c \in [-1/2,1/2)$. 
Therefore its orbit falls into the original domain $[-1/2,1/2)$ after a single application of $T$.
The domain of the coding map $d$ is naturally extended to $[-(c+1/2)/\alpha, (c+1/2)/\alpha)$.

From now on, we restrict ourselves to the case when $\alpha$ is a quadratic unit, i.e., 
$\alpha=(b+\sqrt{b^2\mp 4})/2$. Direct
computation gives

\begin{lem}
For $3\le b\in \Z$ and $\alpha=(b+\sqrt{b^2-4})/2$, we have
\label{Adm}
$$
d\left(\frac12 \right)=c\,0(\overline{c}\,1)^{\infty},\quad
d\left(-\frac12\right)=(\overline{c}\,1)^{\infty}$$
when $b$ is even and 
$$d\left(\frac 12\right)=c\,c\,0(\overline{c}\,\overline{c}\,1)^{\infty},\quad 
d\left(-\frac12\right)=(\overline{c}\,\overline{c}\,1)^{\infty}$$
when $b$ is odd.
\end{lem}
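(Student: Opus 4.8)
The plan is to compute the $T$-orbits of $1/2$ and $-1/2$ explicitly, using only the minimal relation $\alpha^2-b\alpha+1=0$, equivalently $\alpha+\alpha^{-1}=b$; since these orbits become periodic after a preperiod of length at most three, a finite calculation settles everything.

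First I would determine $c=\lfloor(\alpha+1)/2\rfloor$. For $b\ge 3$ one has $b-1/2<\alpha<b$ (the lower bound is just $\sqrt{b^2-4}>b-1$, i.e. $2b>5$), so $(\alpha+1)/2$ lies strictly between $(b-1)/2+1/4$ and $(b+1)/2$; this forces $c=b/2$ when $b$ is even and $c=(b-1)/2$ when $b$ is odd. In both cases $2c+1>\alpha$, hence $(c+1/2)/\alpha>1/2$, so $\pm 1/2$ both lie in the extended domain $[-(c+1/2)/\alpha,(c+1/2)/\alpha)$ and $d(\pm 1/2)$ is well defined. I would also record the two identities $\alpha/2=c-1/(2\alpha)$ (for $b=2c$ even) and $\alpha/2=c+1/2-1/(2\alpha)$ (for $b=2c+1$ odd), together with the bound $0<1/(2\alpha)<1/2$ coming from $\alpha>1$; these turn every floor evaluation into a one-liner.

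Then I would just iterate. For $b$ even: $\lfloor\alpha/2+1/2\rfloor=c$ and $T(1/2)=\alpha/2-c=-1/(2\alpha)$; next $\lfloor\alpha(-1/(2\alpha))+1/2\rfloor=\lfloor 0\rfloor=0$ and $T^2(1/2)=-1/2$; from $-1/2$ one finds $\lfloor-\alpha/2+1/2\rfloor=\overline c$ with $T(-1/2)=1/(2\alpha)$, then $\lfloor\alpha/(2\alpha)+1/2\rfloor=1$ with $T^2(-1/2)=-1/2$. Hence $d(-1/2)=(\overline c\,1)^\infty$ and $d(1/2)=c\,0\,(\overline c\,1)^\infty$. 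For $b$ odd the same bookkeeping gives the orbit $1/2\mapsto 1/2-1/(2\alpha)\mapsto -1/(2\alpha)\mapsto -1/2$ with digits $c,c,0$, and $-1/2\mapsto -1/2+1/(2\alpha)\mapsto 1/(2\alpha)\mapsto -1/2$ with digits $\overline c,\overline c,1$, so $d(-1/2)=(\overline c\,\overline c\,1)^\infty$ and $d(1/2)=c\,c\,0\,(\overline c\,\overline c\,1)^\infty$, as claimed.

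There is no real obstacle here: the argument is a bounded computation, because after one step every iterate lies in the finite set $\{\pm 1/2,\ \pm 1/(2\alpha),\ \pm(1/2-1/(2\alpha))\}\subset[-1/2,1/2)$, so the orbit never escapes the (extended) fundamental domain. The only point demanding a bit of attention is keeping the even and odd cases cleanly separated when reading off the floors, and confirming that $\pm 1/2$ are legitimate arguments of $d$ — both handled in the first step above.
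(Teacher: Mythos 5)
Your proposal is correct and is precisely the ``direct computation'' the paper invokes without writing out: determine $c$, use $\alpha+\alpha^{-1}=b$ to turn each floor evaluation into a one-line estimate, and iterate $T$ until the orbits of $\pm 1/2$ close up after a preperiod of length at most three. One small slip: from $\alpha>b-1/2$ the lower bound for $(\alpha+1)/2$ is $(b-1)/2+3/4$ (i.e.\ $b/2+1/4$), not $(b-1)/2+1/4$ --- as written your bound would not pin down $c$ when $b$ is even --- but the correct bound follows immediately from the inequality you already established, and the remainder of the computation (including the determination of $c$ and all subsequent digits) checks out.
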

%

\begin{lem}
For $1\le b\in \Z$ and $\alpha=(b+\sqrt{b^2+4})/2$,  we have
\label{Adm3}
$$
d\left(\frac12 \right)=c\,1(\overline{c}\,0)^{\infty},\quad
d\left(-\frac12\right)=(\overline{c}\,0)^{\infty}$$
when $b$ is even and 
$$d\left(\frac 12\right)=c\,\overline{(c-1)}\,0(\overline{c}\,(c-1)\,1)^{\infty},\quad 
d\left(-\frac12\right)=(\overline{c}\,(c-1)\,1)^{\infty}$$
when $b$ is odd.
\end{lem}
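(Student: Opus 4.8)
The plan is a short orbit computation for the map $T$ started at the two boundary points, using only the defining relation $\alpha^2=b\alpha+1$ and the localization $b<\alpha<b+1$, which holds for every $b\ge 1$ since $b<\sqrt{b^2+4}<b+2$. From this bound one reads off $c=\lfloor(\alpha+1)/2\rfloor=\lceil b/2\rceil$, so that $b=2c$ when $b$ is even and $b=2c-1$ when $b$ is odd, and also $\alpha<2c+1$, hence $\tfrac12$ (and a fortiori $-\tfrac12$) lies in the extended domain $[-(c+\tfrac12)/\alpha,(c+\tfrac12)/\alpha)$ on which $d$ is defined. The one identity that drives everything is $\alpha\cdot\tfrac{\alpha-b}{2}=\tfrac{\alpha^2-b\alpha}{2}=\tfrac12$, which says that $\pm\tfrac{\alpha-b}{2}$ are the two preimages of $\pm\tfrac12$ under $x\mapsto\alpha x$; note $\tfrac{\alpha-b}{2}\in(0,\tfrac12)$ and $\tfrac{b-\alpha}{2}\in(-\tfrac12,0)$, so both are genuine points of $[-\tfrac12,\tfrac12)$. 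Every floor $d_i=\lfloor\alpha T^{i-1}(x)+\tfrac12\rfloor$ below is pinned down by the single bound $b<\alpha<b+1$.

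First I would compute the orbit of $-\tfrac12$. One gets $d_1=\lfloor(1-\alpha)/2\rfloor=\overline{c}$ and $T(-\tfrac12)=c-\tfrac\alpha2=\tfrac{2c-\alpha}{2}$. When $b$ is even this equals $\tfrac{b-\alpha}{2}\in(-\tfrac12,0)$, so $d_2=\lfloor -\tfrac12+\tfrac12\rfloor=0$ and $T^2(-\tfrac12)=\alpha\cdot\tfrac{b-\alpha}{2}=-\tfrac12$; hence the orbit has period $2$ and $d(-\tfrac12)=(\overline{c}\,0)^{\infty}$. When $b$ is odd, $T(-\tfrac12)=\tfrac{b+1-\alpha}{2}\in(0,\tfrac12)$ with $\alpha T(-\tfrac12)=\tfrac{\alpha-1}{2}$, so $d_2=\lfloor\alpha/2\rfloor=c-1$ and $T^2(-\tfrac12)=\tfrac{\alpha-1}{2}-(c-1)=\tfrac{\alpha-b}{2}\in(0,\tfrac12)$; then $d_3=\lfloor 1\rfloor=1$ and $T^3(-\tfrac12)=-\tfrac12$, giving period $3$ and $d(-\tfrac12)=(\overline{c}\,(c-1)\,1)^{\infty}$.

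Next I would treat $\tfrac12$, which returns to $[-\tfrac12,\tfrac12)$ after one step by construction of the extended domain. Here $d_1=\lfloor(\alpha+1)/2\rfloor=c$ and $T(\tfrac12)=\tfrac\alpha2-c=\tfrac{\alpha-2c}{2}$, with $\alpha T(\tfrac12)=(\tfrac b2-c)\alpha+\tfrac12$. For $b$ even, $\tfrac b2-c=0$, so $\alpha T(\tfrac12)=\tfrac12$, whence $d_2=\lfloor 1\rfloor=1$ and $T^2(\tfrac12)=-\tfrac12$; thus $d(\tfrac12)=c\,1\,d(-\tfrac12)=c\,1\,(\overline{c}\,0)^{\infty}$. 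For $b$ odd, $\tfrac b2-c=-\tfrac12$, so $\alpha T(\tfrac12)=\tfrac{1-\alpha}{2}$, whence $d_2=\lfloor 1-\tfrac\alpha2\rfloor=\overline{(c-1)}$ and $T^2(\tfrac12)=\tfrac{1-\alpha}{2}+(c-1)=\tfrac{b-\alpha}{2}\in(-\tfrac12,0)$; then $d_3=\lfloor 0\rfloor=0$ and $T^3(\tfrac12)=-\tfrac12$, so $d(\tfrac12)=c\,\overline{(c-1)}\,0\,(\overline{c}\,(c-1)\,1)^{\infty}$. In both parities the suffix of $d(\tfrac12)$ starting from the occurrence of $-\tfrac12$ is the orbit of $-\tfrac12$ computed above, which completes the identification.

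The only real work is the bookkeeping: one must keep the localization $b<\alpha<b+1$ explicit at each evaluation of $\lfloor\cdot\rfloor$ and handle the two parities separately, the odd case picking up a period of length three because of the extra intermediate visit to $\tfrac{\alpha-b}{2}$ before the orbit settles. It is also worth recording, for the use made of this lemma in the sequel, that every point of the two orbits except $\tfrac12$ itself lies in $[-\tfrac12,\tfrac12)$, so these are genuine $T$-expansions and the admissibility criterion stated at the beginning of this section applies to them verbatim. The companion Lemma~\ref{Adm} is proved in exactly the same way, now with $\alpha^2=b\alpha-1$ and $b-1<\alpha<b$ in place of the relations above.
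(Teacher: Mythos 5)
Your computation is correct and is exactly the "direct computation" the paper invokes without writing out: iterate $T$ from $\pm\tfrac12$, using $\alpha^2=b\alpha+1$ and $b<\alpha<b+1$ to evaluate each floor, and observe that the orbit of $-\tfrac12$ is periodic of period $2$ (resp.\ $3$) for $b$ even (resp.\ odd) while $\tfrac12$ falls onto $-\tfrac12$ after the stated prefix. All the individual digit evaluations and the identity $\alpha\cdot\tfrac{\alpha-b}{2}=\tfrac12$ check out, so there is nothing to add.
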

%

Using these facts, we obtain 

\begin{theorem}
\label{Int}
If $\alpha\ge 3$ then there exists $\kappa<1/2$ that $[\kappa,1/2] \subset \LL(\alpha)$. 
In particular, $\LL(\alpha)$ has a positive Lebesgue measure. 
\end{theorem}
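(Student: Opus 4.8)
The plan is to realize every $\eta$ in a left-neighbourhood of $1/2$ as a limsup value, exploiting that for a quadratic unit the ambient symbolic dynamics is the \emph{two-sided} shift, which is what makes room to encode symmetric beta expansions. By Theorem~\ref{Closed} the set $\LL(\alpha)$ is closed, and $1/2\in\LL(\alpha)$ since a generic $\xi$ is uniformly distributed modulo one; hence it suffices to produce, for a set of $\eta$ that fills (or is dense in) some interval $[\kappa,1/2]$, a real $\xi$ with $\limsup_n\|\xi\alpha^n\|=\eta$. As $\LL(\alpha)\subset[0,1/2]$, the positive-measure assertion then follows at once.

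I would work in the representation (\ref{Eqn:3}). Treat the case $\alpha_2=\alpha^{-1}$; the case $\alpha_2=-\alpha^{-1}$ is parallel (the backward wing carries alternating signs) and can also be reduced to $\alpha^2$ as in the proof of Theorem~\ref{thm:main2}. Writing $s=(s_n)_{n\in\Z}$ for a bounded integer sequence, the elementary identity $s_n+\sum_{j\ge1}s_{n+j}\alpha^{-j}=\alpha f_{n-1}$ (with $f_m:=\sum_{j\ge1}s_{m+j}\alpha^{-j}$) turns (\ref{Eqn:3}) into
\[
(\sigma^n s)_\alpha=\frac{\alpha}{\alpha^2-1}\bigl(\alpha f_{n-1}+\beta_n\bigr),\qquad \beta_n:=\sum_{j\ge1}s_{n-j}\alpha^{-j},
\]
and, by (\ref{new2}), $\|\xi\alpha^n\|=\|(\sigma^n s)_\alpha\|$ whenever $s$ has finite left support (so $\xi$ is a genuine real number, and terms far to the left do not affect the limsup, cf.\ \S\ref{SD}). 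Now recode: take the symmetric beta expansion $(d_i)_{i\ge1}$ of a point $x\in[-1/2,1/2)$, put $s_n=d_n$ for $n\ge1$ and choose the finitely many terms $s_n$ ($n\le0$) freely. Then $f_{n}=T^{n}(x)$ for $n\ge0$, while $\beta_n$ obeys $\beta_{n+1}=(\beta_n+s_n)/\alpha$, so the pair $(f_{n-1},\beta_n)$ runs along a forward orbit of the natural extension $\widehat T$ of the symmetric beta transformation, which (the expansions of $\pm1/2$ being eventually periodic by Lemmas~\ref{Adm} and \ref{Adm3}) lives on an explicit natural extension domain $\Omega\subset\R^2$ that is a finite union of rectangles, $\widehat T$ acting on it as a piecewise baker map.

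If $x$ is chosen so that the orbit of $(f_{n-1},\beta_n)$ is dense in a transitive $\widehat T$-invariant set $\Lambda\subset\Omega$, then $\limsup_n\|\xi\alpha^n\|=\max_{(u,v)\in\Lambda}\bigl\|\tfrac{\alpha}{\alpha^2-1}(\alpha u+v)\bigr\|$. For $\alpha\ge3$ the expression $\tfrac{\alpha}{\alpha^2-1}(\alpha u+v)$ stays in $(-1,1)$ on $\Omega$, so this maximum is governed by how closely $\Lambda$ approaches the critical lines $\alpha u+v=\pm(\alpha^2-1)/(2\alpha)$, where $\|\cdot\|$ equals $1/2$; one therefore aims orbits \emph{towards}, not away from, those lines, since $(\sigma^n s)_\alpha$ may leave $[-1/2,1/2)$ and only its distance to $\Z$ matters. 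I would fix a rectangle $R\subset\Omega$ through whose interior a critical line passes — whose existence with enough room is exactly what $\alpha\ge3$, i.e.\ $c=\lfloor(\alpha+1)/2\rfloor\ge2$ and $\#\D\ge5$, secures — and for each small $\delta>0$ place inside $R$ a $\widehat T$-adapted sub-rectangle at distance $\delta$ from the critical line on the correct side; its maximal invariant subset $\Lambda_\delta$ is a non-empty sub-SFT whose orbits come arbitrarily near distance $\delta$ but no nearer, so $\max_{\Lambda_\delta}\|\cdot\|=\tfrac12-\tfrac{\alpha}{\alpha^2-1}\delta$. A Hall-type thickness argument — the self-similar Cantor sets cutting out the two coordinate directions of these sub-rectangles have thickness $>1$ once $\#\D\ge5$, so the relevant sums contain intervals — shows the admissible $\delta$ fill an interval $[0,\delta_0]$, hence the realized $\eta$ fill $[\kappa,1/2]$ with $\kappa=\tfrac12-\tfrac{\alpha}{\alpha^2-1}\delta_0<\tfrac12$; finally each $\Lambda_\delta$ is the orbit-closure of an honest $\xi$ (choose $(d_i)$ to be a transitive point of the defining sub-SFT).

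The step I expect to be the main obstacle is precisely the passage from a Cantor-like collection of easily produced values to a genuine interval: confining one coordinate, or forbidding finitely many factors, moves $\eta$ only in discrete jumps, so the interval has to come from a two-dimensional thickness/sumset phenomenon, available only because the shift is two-sided and $\widehat T$ is hyperbolic with rectangle structure. (For integer $\alpha$ the limsup word is one-sided, $\Omega$ degenerates to an interval, and $\LL(a)$ has measure zero by Theorem~\ref{Hdim0}.) Subsidiary difficulties are the mod-$\Z$ bookkeeping just mentioned, computing $\Omega$ explicitly from Lemmas~\ref{Adm} and \ref{Adm3}, and checking the numerical thickness inequality that makes $\alpha\ge3$ the right threshold.
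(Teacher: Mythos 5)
Your overall framing---use the two-sided shift to encode symmetric beta expansions, and get an interval of limsup values near $1/2$---matches the paper's, but the mechanism you propose for producing a \emph{continuum} of values is different and contains the essential gap. You realize candidate values only as $\max_{(u,v)\in\Lambda_\delta}\|\tfrac{\alpha}{\alpha^2-1}(\alpha u+v)\|$ over maximal invariant sets of sub-rectangles placed at distance $\delta$ from a critical line, and then assert that a ``Hall-type thickness argument'' makes the admissible $\delta$ fill an interval. That assertion is the entire content of the theorem and is not proved. Two concrete problems: first, the maximal invariant subset of a sub-rectangle under a piecewise baker map is typically a Cantor set that need not contain points on (or arbitrarily near) the face of the rectangle closest to the critical line, so the identity $\max_{\Lambda_\delta}\|\cdot\|=\tfrac12-\tfrac{\alpha}{\alpha^2-1}\delta$ fails in general --- the achieved maximum jumps as $\delta$ varies, which is exactly the ``discrete jumps'' obstruction you yourself name; second, the thickness inequality (``$>1$ once $\#\D\ge5$'') is never verified, and it is not the inequality that actually makes $\alpha\ge3$ the right threshold in the paper.

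The paper avoids all of this by realizing each $\eta\in[\kappa,1/2]$ \emph{exactly and directly}: it takes the symmetric beta expansion $y_0y_1y_2\dots$ of $\eta(\alpha-\alpha_2)/\alpha$ and splits each digit between the two wings of a bi-infinite word, $s_0=y_0$, $s_n=\lceil y_n/2\rceil$, $s_{-n}=\lfloor y_n/2\rfloor$. Since $\lceil y/2\rceil+\lfloor y/2\rfloor=y$, the central value $((s_n))_\alpha$ equals $\eta$ on the nose, while every shifted value is bounded by $\eta$ because the split digits have modulus at most $\lceil c/2\rceil$, giving the elementary estimate (\ref{Key}) (with small case-by-case adjustments for small $b$). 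Gluing central blocks $t(\ell)t(\ell+1)\dots$ then produces an honest one-sided $\xi$ with $\limsup_n\|\xi\alpha^n\|=\eta$. So the interval is obtained value by value, with no invariant-set or sumset machinery. If you want to salvage your route, you would need to actually construct, for every $\delta$ in an interval, an invariant set whose closest approach to the critical line is exactly $\delta$ --- which is a substantially harder task than the digit-splitting trick and is not accomplished by citing thickness.
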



\begin{proof}
First we consider the case $\alpha=(b+\sqrt{b^2-4})/2$ and $\alpha_2=1/\alpha$. 
Then $\alpha\ge 3$ if and only if $b\ge 4$.
We will show that $\kappa=c/(\alpha+1)$ suffices for $b\ge 8$. Since $\alpha$ is irrational, we
have $\kappa<1/2$. 
Take $\eta\in [\kappa,1/2]$ and set 
$$
d\left(\frac{\eta(\alpha-\alpha_2)}{\alpha}\right)=y_0y_1y_2 \dots.$$
Define $(s_n)_{n\in \Z}$ by $s_0=y_0$, $s_n=\lceil y_n/2 \rceil$ and $s_{-n}=\lfloor y_n/2 \rfloor$
for $n\in \N$. 
Since $\lceil y/2 \rceil +\lfloor y/2 \rfloor =y$ for any $y\in \Z$, 
we see $((s_n))_{\alpha}=\eta$ from the definition.
We claim that $|g(\sigma^k((s_n)))|\le \eta$ for $k\in \Z$. In fact, since $s_n\le \lceil c/2 \rceil$
holds except $n=0$, it suffices to show an inequality
\begin{equation}
\label{Key}
\left\lceil \frac c2 \right\rceil \left( 1+ \sum_{i=1}^{\infty} \frac 2{\alpha^i} +\frac 1{\alpha} \right)= 
\left\lceil \frac c2 \right\rceil \left( 1+ \frac 2{\alpha-1} +\frac 1{\alpha} \right) 
\le c -\frac c{\alpha}.
\end{equation}
Note that the term $1/\alpha$ in the left side of (\ref{Key})
gives the maximum possible contribution
from the exceptional digit $c$ after shifting the sequence $(s_n)$.
Thus, (\ref{Key}) holds for $b\ge 8$, which shows the claim.
We expect $(s_n)$ to play a role of the limsup word. Take the central block 
$t(n)=s_{-n}\dots s_n$ and a sufficiently large integer $\ell$ 
and construct 
a word $(x_n)_{n\in \N}$ by $x_n=0$ for $n\le 0$ and 
$$
x_1x_2\dots= t(\ell)t(\ell+1)\dots.
$$
Then by (\ref{Key}), the conjunction part of $t(k)$ and $t(k+1)$ does no harm and we have
$$
\limsup_{k\to \infty} |g(\sigma^k((x_n)))|=\eta.
$$
This finishes the case $\alpha=(b+\sqrt{b^2-4})/2$ with $b\ge 8$.
For $b=5$ and $b=7$, we can confirm 
\begin{equation}
\label{Key2}
\left\lceil \frac c2 \right\rceil \left( 1+ \frac 2{\alpha-1} +\frac 1{\alpha} \right) 
\le c
\end{equation}
and the statement is valid for $$
\kappa=\frac {c}{\alpha-\alpha_2}<\frac 12.$$
For $b=6$, since $c=3$ is odd, $x_i=\pm \lceil c/2\rceil$ occurs only when $y_i=\pm c$.
In addition, we use the fact that if
$c$ (resp. $\overline{c}$) appears as a digit $y_i$, then it must be followed by
a non-positive (resp. non-negative) digit in the symmetric beta 
expansion by Lemma \ref{Adm}. Thus we may substitute (\ref{Key}) by
\begin{equation}
\label{Key3}
\left\lceil \frac c2 \right\rceil \left( 1+ \sum_{i=1}^{\infty} \frac 2{\alpha^{2i}} +\frac 2{\alpha} \right)= 
\left\lceil \frac c2 \right\rceil \left( 1+ \frac 2{\alpha^2-1} +\frac 2{\alpha} \right) 
\le c -\frac {10}{9\alpha}, 
\end{equation}
which holds for $b=4,6$. 
Therefore the statement for $b=6$ holds with $$
\kappa=\frac{c-10/(9\alpha)}{\alpha-\alpha_2}<\frac 12.$$
Finally we consider the case $b=4$. This implies $c=2$ and the same logic for $b=6$
does not work. We make a minor change of definition of $(s_n)$: 
$s_0=y_0$, $s_n=\lceil y_n/2 \rceil$ and $s_{-n}=\lfloor y_n/2 \rfloor$
for odd $n\in \N$, $s_n=\lfloor y_n/2 \rfloor$ and $s_{-n}=\lceil y_n/2 \rceil$
for even $n\in \N$. 
Then we can confirm
that $s_n\in \{-1,0,1\}$ and $s_n s_{n+1}\neq 11,\overline{1}\hspace{0.4mm}\overline{1}$ if $n\ge 1$
or $n\le -2$. Therefore we can apply (\ref{Key3}) and the same $\kappa$ for $b=4$ as well.


The proof goes in a similar manner for $\alpha=(b+\sqrt{b^2+4})/2$ and $\alpha_2=-1/\alpha$. 
In this case $\alpha\ge 3$ is equivalent to $b\ge 3$. 
There is a small
difference that if we take $\eta\in [\kappa,1/2)$, the value $(\alpha-\alpha_2)\eta/\alpha$ 
may not be in $[-1/2,1/2)$
but in $[-(c+1/2)/\alpha, (c+1/2)/\alpha)$, which follows from a subtle\footnote{This is shown by classifying 
$b$ by its parity.} inequality
$$
\frac{\alpha-\alpha_2}{2}\le \left\lfloor \frac{\alpha+1}{2} \right\rfloor+\frac 12
$$
for $\alpha=(b+\sqrt{b^2+4})/2$ with $b\in \N$.
As described above, we obtain $d\left((\alpha-\alpha_2)\eta/\alpha \right)\in \D^{\N}$ 
and it does not affect the course of the proof.
For $b\ge 7$, we see (\ref{Key}) holds and 
we can take
$$
\kappa=\frac {1}{\alpha-\alpha_2} \left(c - \frac c{\alpha}\right)=\frac {c(\alpha-1)}{\alpha^2+1}<\frac 12.
$$
For $b=6,4$, 
\begin{equation}
\label{Key4}
\left\lceil \frac c2 \right\rceil \left( 1+ \frac 2{\alpha-1} +\frac 1{\alpha} \right) 
\le c +\frac {2}{3\alpha},
\end{equation}
which gives the choice $\kappa= \left(c +  2/(3\alpha)\right)/(\alpha-\alpha_2)< 1/2$. 
For $b=5$, since $c=3$ is odd, by using the same discussion as above, 
\begin{equation}
\label{Key5}
\left\lceil \frac c2 \right\rceil \left( 1+ \frac 2{\alpha^2-1} +\frac 2{\alpha} \right) 
\le c -\frac {c-1}{\alpha},
\end{equation}
which gives the choice 
$$
\kappa = \frac {1}{\alpha-\alpha_2} \left(c - \frac {c-1}{\alpha}\right)<\frac 12.
$$
Indeed, if $b=5$ then the digits $c$ (resp. $\overline{c}$)
must be followed by a non-positive (resp. non-negative) digit except for the first digit. 
Finally $b=3$ implies $c=2$ and we can use the same 
trick to change the definition of $(s_n)$ and the estimate (\ref{Key5}).
\end{proof}


\section{Open problems}
We list several open problems of interest. 
\begin{enumerate}
\item
It is of interest whether Theorems \ref{thm:main}, \ref{thm:main2} and \ref{Int}
can be extended to 
three remaining cases $\alpha\in \{(3+\sqrt{5})/2, (1+\sqrt{5})/2, 1+\sqrt{2} \}$
where $\alpha\le 3$. 
\item What can be said about $\LL(\alpha)$ in the case where $\alpha$ is a Pisot unit of degree greater than 2?
For example, describe the minimal $t>0$ such that $\\L(\alpha)\cap(0,t]\ne\emptyset$. Moreover, 
determine the minimal limit point $t_0(\alpha)$ of  $\LL(\alpha)$. In particular, is $t_0(\alpha)$ transcendental?
\item Let $\alpha>1$ be a fixed quadratic unit. Is $\mathrm{dim}_H (\LL(\alpha) \cap [0,t])$ continuous in $t$? 
\textcolor{red}{Note that Moreira \cite{Moreira.18} derived the continuity of $\mathrm{dim}_H (\L \cap [0,t])$ from
the dimension theory of sums of Cantor sets produced by non-essentially affine maps, but our setting is piecewise affine.}
\item Let $\alpha>3$ be a Pisot unit and $t_0(\alpha)$ be the minimal limit point of $\LL(\alpha)$. 
Can we find an interval in $[t_0(\alpha),1/2]\setminus \LL(\alpha)$? 
\item Let $\alpha$ be a Salem number. It is well known that 0 is a limit point of $\LL(\alpha)$. 
Is $\LL(\alpha)$ a closed set?
\item For a real number $c\in [0,1/2]$, put 
\[
\mathcal{G}(c):=\{\xi\in [0,1]\mid \|\xi \alpha^n\|\geq c\mbox{ for any }n\geq 0\}.
\]
What can be said on $\mathrm{dim}_H \mathcal{G}(c)$?  When $\alpha$
is an integer, Nilsson \cite{Nilsson} showed that $\mathrm{dim}_H \mathcal{G}(c)$ is continuous 
and its derivative is zero for almost every $c$.
\end{enumerate}


\section*{Acknowledgments}

The authors are deeply indebted to Teturo Kamae and Shin-ichi Yasutomi for helpful and insightful discussions. 
We also would like to thank Yann Bugeaud and Dong Han Kim for informing us relevant references.
This research was partially supported by JSPS grants 
(17K05159, 17H02849, BBD30028, 15K17505, 19K03439).


\begin{thebibliography}{10}

\bibitem{Akiyama-Scheicher:04}
S.~Akiyama and K.~Scheicher, \emph{Symmetric shift radix systems and finite
  expansions}, Math. Pannon. \textbf{18} (2007), no.~1, 101--124.

\bibitem{Berstel-ADLuca:97}
J.~Berstel and A.~de~Luca, \emph{Sturmian words, {L}yndon words and trees},
  Theoret. Comput. Sci. \textbf{178} (1997), no.~1-2, 171--203.

\bibitem{Bersteletal:09}
J.~Berstel, A.~Lauve, C.~Reutenauer, and F.V. Saliola, \emph{Combinatorics on
  words}, CRM Monograph Series, vol.~27, American Mathematical Society,
  Providence, RI, 2009, Christoffel words and repetitions in words.

\bibitem{Bombieri:07}
E.~Bombieri, \emph{Continued fractions and the {M}arkoff tree}, Expo. Math.
  \textbf{25} (2007), no.~3, 187--213.

\bibitem{Bourbaki_AlgebraII}
N.~Bourbaki, \emph{Algebra {II}. {C}hapters 4--7}, Elements of Mathematics
  (Berlin).
  
\bibitem{Bugeaud.14}
Y. Bugeaud, 
\emph{On the quadratic Lagrange spectrum}, 
Math. Z. \textbf{276} (2014), 985--999.

\bibitem{Cer-Mat-Mor}
A.~Cerqueira, C.~Matheus, and C.~G.~Moreira, 
\emph{Continuity of Hausdorff dimension across generic dynamical Lagrange}
J. Mod. Dyn. \textbf{12} (2018), 151--174.

\bibitem{Cusick:75}
T.~W. Cusick, \emph{The connection between the {L}agrange and {M}arkoff
  spectra}, Duke Math. J. \textbf{42} (1975), no.~3, 507--517.

\bibitem{Cusick-Flahive:89}
T.~W. Cusick and M.~E. Flahive, \emph{The {M}arkoff and {L}agrange spectra},
  Mathematical Surveys and Monographs, vol.~30, American Mathematical Society,
  Providence, RI, 1989.

\bibitem{Dubickas:06_2}
A.~Dubickas, \emph{On the distance from a rational power to the nearest
  integer}, J. Number Theory \textbf{117} (2006), no.~1, 222--239.

\bibitem{Falconer:90}
K.~J. Falconer, \emph{Fractal geometry}, John Wiley and Sons, Chichester, 1990.

\bibitem{Fogg:02}
N.~Pytheas Fogg, \emph{Substitutions in {D}ynamics, {A}rithmetics and
  {C}ombinatorics}, Lecture Notes in Mathematics, vol. 1794, Springer-Verlag,
  Berlin, 2002.

\bibitem{Freiman}
G.~A. Fre\u{\i}man, \emph{Diophantine approximations and the geometry of
  numbers ({M}arkov's problem)}, Kalinin. Gosudarstv. Univ., Kalinin (Russian),
  1975.

\bibitem{Gustafson-Milne}
R.~A. Gustafson and S.~C. Milne, \emph{Schur functions, {G}ood's identity, and
  hypergeometric series well poised in {${\rm SU}(n)$}}, Adv. in Math.
  \textbf{48} (1983), no.~2, 177--188.

\bibitem{Hall}
M.~Hall, Jr., \emph{On the sum and product of continued fractions}, Ann. of
  Math. (2) \textbf{48} (1947), 966--993.

\bibitem{Hardy}
G.~H. Hardy, \emph{A problem in {D}iophantine approximation}, J. Indian Math.
  Soc. \textbf{11} (1919), 162--166.


\bibitem{Kaneko:09}
\bysame, \emph{Limit points of fractional parts of geometric sequences}, Unif.
  Distrib. Theory \textbf{4} (2009), no.~2, 1--37.

\bibitem{Kechris:95}
A.~S. Kechris, \emph{Classical descriptive set theory}, Graduate Texts in
  Mathematics, vol. 156, Springer-Verlag, New York, 1995.

\bibitem{Kuipers-Niederreiter:74}
L.~Kuipers and H.~Niederreiter, \emph{Uniform distribution of sequences}, J.
  Wiley and Sons, New York, 1974.

\bibitem{Lothaire:02}
M.~Lothaire, \emph{Algebraic combinatorics on words}, Encyclopedia of
  Mathematics and its Applications, vol.~90, Cambridge University Press,
  Cambridge, 2002.

\bibitem{Louck-Biedenharn}
J.~D. Louck and L.~C. Biedenharn, \emph{Canonical unit adjoint tensor operators
  in {$U(n)$}}, J. Mathematical Phys. \textbf{11} (1970), 2368--2414.

\bibitem{Markoff:79}
A.~Markoff, \emph{Sur les formes quadratiques binaires ind\'{e}finies}, Math.
  Ann. \textbf{15} (1879), 381--496.

\bibitem{Markoff:80}
\bysame, \emph{Sur les formes quadratiques binaires ind\'{e}finies}, Math. Ann.
  \textbf{17} (1880), 379--399.

\bibitem{Matheus}
C.~Matheus, \emph{The {L}agrange and {M}arkov spectra from the dynamical point
  of view}, Ergodic theory and dynamical systems in their interactions with
  arithmetics and combinatorics, Lecture Notes in Math., vol. 2213, Springer,
  Cham, 2018, pp.~259--291.
  
\bibitem{Moreira.18}
C.~G.~Moreira, \emph{Geometric properties of the Markov and Lagrange spectra},
Ann. of Math. \textbf{188} (2018), 145--170.

\bibitem{Nilsson}
J.~Nilsson, \emph{On numbers badly approximable by $q$-adic rationals}, Ph.D.
  thesis, Lund University and Universit\'e du Sud Toulon-Var, 2007.

\bibitem{Nishioka}
K.~Nishioka, \emph{Mahler functions and transcendence}, Lecture Notes in
  Mathematics, vol. 1631, Springer-Verlag, Berlin, 1996.
  
\bibitem{Parkkonen-Paulin.11}
J.~Parkkonen and F.~Paulin: 
\emph{Spiraling spectra of geodesic lines in negatively curved manifolds}, 
Math. Z. \textbf{268}(2011), 101--142.
\bibitem{Parkkonen-Paulin.14}
\emph{Erratum to: Spiraling spectra of geodesic lines in negatively curved manifolds}, 
Math. Z. \textbf{276}(2014), 1215--1216.

\bibitem{Pejkovic.16}
T.~Pejkovi\'{c}, 
\emph{Quadratic Lagrange spectrum}, 
Math. Z. \textbf{283} (2016), 861--869.

\bibitem{Pisot:38}
C.~Pisot, \emph{La r\'{e}partition modulo $1$ et nombres alg\'{e}briques}, Ann.
  Scuola Norm. Sup. Pisa {II} \textbf{7} (1938), 205--248.


\bibitem{Reutenauer:06}
C.~Reutenauer, \emph{On {M}arkoff's property and {S}turmian words}, Math. Ann.
  \textbf{336} (2006), no.~1, 1--12.

\bibitem{Reutenauer:09}
\bysame, \emph{Christoffel words and {M}arkoff triples}, Integers \textbf{9}
  (2009), A26, 327--332.

\bibitem{Stanley2}
R.~P. Stanley, \emph{Enumerative combinatorics. {V}ol. 2}, Cambridge Studies in
  Advanced Mathematics, vol.~62, Cambridge University Press, Cambridge, 1999.

\bibitem{Takagi:65}
T.~Takagi, \emph{Lectures in algebra (in {J}apanese)}, Kyoritsu Publ., 1965.


\bibitem{Yasutomi}
S.~Yasutomi, \emph{The continued fraction expansion of {$\alpha$} with
  {$\mu(\alpha)=3$}}, Acta Arith. \textbf{84} (1998), no.~4, 337--374.

\end{thebibliography}

\bigskip
{\bf Appendix (Proof of Remark \ref{rem:ch})}
\bigskip

\begin{lem}
Let $\nu$ be a lower Christoffel word with $|\nu|\ge 2$ and $\x:=\nu^{\Z}$.
Assume that $\y(n):=\sigma^n(\x)=Avab\tilde{v}B$ 
satisfies $ab=\y(n)[0,1]\in\{01,10\}$, 
where $v\in \A^*$ and $A$, $B$ are left infinite and right infinite, respectively. 
Then $|v|=|\nu|-2$ if and only if $\y(n)[1,|\nu|]$ is equal to the lower or upper Christoffel word.
\end{lem}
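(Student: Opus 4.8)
The plan is to boil the equivalence down to a single statement about the word $z:=\y[2,|\nu|-1]$, where I abbreviate $\y:=\y(n)$. Since $\y=\sigma^{n}(\nu^{\Z})$ is $|\nu|$-periodic, $\y[-k]=\y[|\nu|-k]$ for all $k$; and, by the definition of the mirror word, the hypothesis ``$\y=Av\,ab\,\widetilde{v}\,B$ with $|v|=m$'' says exactly that $\y[-k]=\y[k+1]$ for $k=1,\dots,m$. For $1\le k\le|\nu|-2$ the indices $k+1$ and $|\nu|-k$ both lie in $\{2,\dots,|\nu|-1\}$, so, combining with periodicity, this condition holds for $m=|\nu|-2$ if and only if $\y[i]=\y[|\nu|+1-i]$ for all $i\in\{2,\dots,|\nu|-1\}$, i.e.\ if and only if $z$ is a palindrome; and $m$ can never exceed $|\nu|-2$ because $|v|\le\iota(\x)=|\nu|-2$ by Lemma~\ref{Maxiota}. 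Finally $\y[1]=b$ and, by periodicity, $\y[|\nu|]=\y[0]=a$, so $\y[1,|\nu|]=b\,z\,a$, a word of shape $0z1$ or $1z0$. Hence the lemma is equivalent to: $z$ is a palindrome if and only if $b\,z\,a$ is the lower or upper Christoffel word of $\nu$.

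For the direction ($\Leftarrow$), write $\nu=0w1$ with $w$ its central word, a palindrome of length $|\nu|-2$ by Lemma~\ref{Central}; the upper Christoffel word of the same slope is then $\widetilde\nu=1w0$, the reversal of $\nu$ (immediate from the defining formulas via $\lfloor p-x\rfloor=p+\lfloor-x\rfloor$). So if $\y[1,|\nu|]\in\{0w1,1w0\}$ then, since $\y[1,|\nu|]=b\,z\,a$ with $\{a,b\}=\{0,1\}$, we must have $z=w$, which is a palindrome.

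For ($\Rightarrow$), suppose $z$ is a palindrome. Then $b\,z\,a$, being a factor of $\nu^{\Z}$ of length $|\nu|$, is a conjugate of $\nu$, and it is of the form $0z1$ or $1z0$ with $z$ a palindrome. Now I use the classical description of central words (de Luca; see \cite[Ch.~2]{Lothaire:02}, \cite{Berstel-ADLuca:97}, \cite[\S4]{Bersteletal:09}): among the conjugates of the primitive Christoffel word $\nu=0w1$, the only ones of the form $0\,(\text{palindrome})\,1$ or $1\,(\text{palindrome})\,0$ are $0w1=\nu$ itself and $1w0=\widetilde\nu$ (indeed, a palindrome $z$ for which $0z1$, equivalently $1z0$, is balanced is a central word, and then $0z1$, $1z0$ are the lower, upper Christoffel words of their common slope, which here coincides with the slope of $\nu$). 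Applied to $b\,z\,a$, this gives $\y[1,|\nu|]\in\{\nu,\widetilde\nu\}$, as required.

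The only ingredient beyond periodicity bookkeeping and Lemmas~\ref{Central} and~\ref{Maxiota} is this last classical fact about central and Christoffel words, so that is the step I expect to need the most care; everything else is elementary. Combining the lemma with the (also classical) facts that $\widetilde\nu$ is a conjugate of $\nu$ and that $\nu,\widetilde\nu$ are the only Christoffel words among the $|\nu|$ conjugates of $\nu$, one gets that $n$ with $|v|=|\nu|-2$ realises exactly two residues modulo $|\nu|$, one yielding $\nu$ and one yielding $\widetilde\nu$, which is the assertion of Remark~\ref{rem:ch}.
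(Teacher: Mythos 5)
Your proof is correct, but it takes a genuinely different route from the paper's. The paper works directly with the arithmetic coding of the Christoffel word: it sets $n_i\equiv ip\pmod q$, reads off $a_i$ from whether $n_{i-1}<n_i$ or $n_{i-1}>n_i$, observes that the two positions $n_k\in\{0,q-1\}$ give the full symmetric extension of length $|\nu|-2$ (yielding the lower and upper Christoffel words), and then shows by a residue argument that at every other descent/ascent the symmetry breaks strictly before length $|\nu|-2$. Your argument instead does the (correct) periodicity bookkeeping to reduce the lemma to the statement that $z=\y(n)[2,|\nu|-1]$ is a palindrome iff $bza\in\{\nu,\tilde{\nu}\}$, handles the easy direction via Lemma \ref{Central} and the identity $\tilde{\nu}=1w0$, and then settles the hard direction by the classical characterization of central words. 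What the paper's computation buys is self-containedness: Lemma \ref{Central} only supplies the implication ``Christoffel $\Rightarrow$ palindromic core,'' and the appendix deliberately proves the converse-type statement from scratch rather than importing it. What your route buys is brevity and a conceptual explanation of \emph{why} exactly two conjugates work.

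The one step you should pin down is the imported fact. As stated (``$z$ a palindrome with $0z1$ balanced is central'') it is true but is exactly the nontrivial content; a cleaner and fully citable way to close it in your setting is via Pirillo's theorem ($z$ is central if and only if $0z1$ and $1z0$ are conjugate, see \cite{Bersteletal:09}): since $bza$ is a conjugate of $\nu$ and the conjugacy class of a Christoffel word is closed under reversal (because $\tilde{\nu}$ is itself a conjugate of $\nu$), both $0z1$ and $1z0$ are conjugates of $\nu$, hence conjugate to each other, so $z$ is central and $0z1$ is the unique lower Christoffel word of the slope of $\nu$, i.e.\ $0z1=\nu$. With that substitution your argument is complete; without some such reference the key implication is asserted rather than proved.
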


\begin{proof}
We may assume $q:=|\nu|\ge 3$. For $p<q$ with $(p,q)=1$, consider a 
bi-infinite integer sequence $(n_i)_{i\in\Z}$ with $n_i\in [0,q-1]$ and 
$n_i \equiv i p \pmod {q}$. 
Define $$
a_i=\begin{cases} 0 & n_{i-1}<n_{i} \cr
                  1 & n_{i-1}>n_{i}\end{cases}.
$$
Then $a_1\dots a_{q}$ is nothing but the lower Christoffel word of slope $p/q$. 
Since switching the slope $p/q$ to $(q-p)/q$ corresponds to the involution $0\to 1,1\to 0$,
we may additionally assume that $2p<q$.
Then we have $a_{i}=1, a_{i+1}=0$ if and only if $n_i\in [0,p-1]$ and 
$a_{i}=0, a_{i+1}=1$ if and only if $n_i\in [q-p,q-1]$. 
For the index $k$ with $n_k=0$, we see that $a_{k-1}=1,a_{k}=0$ 
and $a_{k-1-\ell}=a_{k+\ell}$ holds for $\ell\le |\nu|-2$. 
This case corresponds to the lower
Christoffel word of slope $p/q$. 
When $n_k=q-1$, we have $a_{k-1}=0,a_{k}=1$ and 
$a_{k-1-\ell}=a_{k+\ell}$ holds for $\ell\le |\nu|-2$, corresponding to the upper Christoffel word. 
Our goal is to show that 
for an index $k$ that $n_k \not \equiv 0,q-1 \pmod{q}$ with $a_{k-1}
\neq a_k$,  there exists $\ell<|\nu|-2$ that $a_{k-1-\ell}\neq a_{k+\ell}$. 
We may assume that $2kp\not \equiv -1 \pmod{q}$. 
Indeed, $2kp\equiv -1 \pmod{q}$ implies $q$ is odd and $kp=(q-1)/2$, but
$p\le (q-1)/2 <q-p$ implies $a_{k-1}=a_k$.
Then we see that $n_{k+\ell}\in [0,p-1]$ is not equivalent to $n_{k-\ell}\in [q-p,q-1]$.
If they are equivalent, then there must exist $\ell$ that $n_{k+\ell}=0$ and $n_{k-\ell}=q-1$, which 
contradicts $2kp\not \equiv -1 \pmod{q}$.
Similarly $n_{k+\ell}\in [q-p,q-1]$ is not equivalent to $n_{k-\ell}\in [0,p-1]$.
Therefore there are $\ell_1,\ell_2 \in [1,q-1]\cap \Z$ 
that $n_{k+\ell_1}\in [0,p-1]$, $n_{k-\ell_1}\not \in [p-q,p-1]$,
$n_{k+\ell_2}\in [q-p,q-1]$ and 
$n_{k-\ell_2}\not \in [0,p-1]$.
Since one of $\ell_i$ is less than $q-1$, we obtain the result.
\end{proof}
\bigskip
{\bf Correction to "Theorem 2.2"}
\bigskip
\(\)\\
Theorem 2.2 was not correctly stated and should be replaced by
\bigskip

{\bf Theorem 2.2}
{\it
$$
\dim_H(\LL(a)\cap [0,t)) \le \frac{\log \left(2\lceil a^{\ell} t \rceil\right)
}{\log a^{\ell}}$$
for any integers $a\ge 2$ and $\ell\ge 1$. 
In particular, $\LL(a)$ has Lebesgue measure zero.
}
\bigskip

\begin{proof}
Let $t\in [0,1/2)$ and $\eta\in \LL(a)\cap [0,t]$. Then by taking a limsup word, we have shown 
that
$\{\eta a^n\} \in [0,t] \cup [1-t,1]$ for all $n\in \N$. 
However from this fact we can only show that
$\eta$ belongs to the attractor $Z$ of an iterated function system
$$
Z = \bigcup_{k\in Q} \left(\frac {Z}{a^{\ell}} + \frac {k}{a^{\ell}}\right)
$$
where $Q=\{q \in \Z\ |\ q \in [0,\lceil a^{\ell} t \rceil-1] 
\cup [\lfloor a^{\ell} (1-t) \rfloor, a^{\ell}-1] \}$. Since 
$\mathrm{Card}(Q)=\min\{a^{\ell},2\lceil a^{\ell} t \rceil\}$, we see
$$
\dim_{H}(Z)=\min\left\{1,\frac{\log \left(2\lceil a^{\ell} t \rceil\right)
}{\log a^{\ell}}\right\}
$$
and the required inequality.  If $t<1/2$ then taking $\ell$ with $\ell>\log(\frac2{1-2t})/\log a$ we have $\dim_{H}(Z)<1$. 
The final statement follows from
$$
\LL(a)-\left\{\frac 12\right\}=\bigcup_{n=1}^{\infty} \left(\LL(a)\cap \left[0,\frac 12- \frac 1n\right] \right).
$$
\end{proof}

One can derive a very precise estimate of $\dim_H(\LL(a)\cap [0,t))$ but we leave it for the other occasion.
\end{document}